\documentclass[a4paper]{article}
\usepackage[T1]{fontenc}
\usepackage{amsfonts,amsmath,amsthm,amssymb}
\usepackage{calc}
\usepackage{gensymb}
\usepackage[nottoc, notlof, notlot]{tocbibind}
\usepackage{fullpage}
\usepackage{mathrsfs}  
\usepackage{bigints}
\usepackage{indentfirst}
\usepackage{enumerate}
\usepackage{authblk}
\usepackage{hyperref}
\usepackage{stmaryrd}

\numberwithin{equation}{section}
\DeclareMathOperator{\tr}{Tr}
\DeclareMathOperator{\ts}{tr}

\DeclareMathOperator{\id}{id}
\DeclareMathOperator{\ev}{ev}

\newcommand{\norm}[1]{\left\Vert #1\right\Vert}

\begin{document}

\newtheorem{theorem}{Theorem} [section]
\newtheorem{prop}[theorem]{Proposition} 
\newtheorem{defi}[theorem]{Definition} 
\newtheorem{exe}[theorem]{Example} 
\newtheorem{lemma}[theorem]{Lemma} 
\newtheorem{rem}[theorem]{Remark} 
\newtheorem{cor}[theorem]{Corollary} 
\newtheorem{conj}[theorem]{Conjecture}
\renewcommand\P{\mathbb{P}}
\newcommand\E{\mathbb{E}}
\newcommand\N{\mathbb{N}}
\newcommand\1{\mathbf{1}}
\newcommand\C{\mathbb{C}}
\newcommand\CC{\mathcal{C}}
\newcommand\M{\mathbb{M}}
\newcommand\R{\mathbb{R}}
\newcommand\U{\mathbb{U}}
\newcommand\A{\mathcal{A}}
\newcommand\B{\mathcal{B}}
\newcommand\F{\mathcal{F}}
\renewcommand\i{\mathbf{i}}
\newcommand\x{\mathbf{x}}
\newcommand\y{\mathbf{y}}
\renewcommand\S{\mathcal{S}}
\renewcommand\d{\partial_i}
\renewcommand\.{\ .}
\renewcommand\,{\ ,}

\def\etc{,\dots ,}

\allowdisplaybreaks

 \begin{minipage}{0.85\textwidth}
 	\vspace{2.5cm}
 \end{minipage}
 \begin{center}
 	\large\bf Asymptotic freeness through unitaries generated by polynomials of \\ Wigner matrices
 	
 \end{center}

 \renewcommand{\thefootnote}{\fnsymbol{footnote}}	
 \vspace{0.8cm}
 
 \begin{center}
 	\begin{minipage}{1.5\textwidth}
 		
 		\begin{minipage}{0.3\textwidth}
 			\begin{center}
 				F\'elix Parraud\\
 				\footnotesize 
 				{KTH Royal Institute of Technology}\\
 				{\it parraud@kth.se}
 			\end{center}
 		\end{minipage}
 		\begin{minipage}{0.3\textwidth}
 			\begin{center}
 				Kevin Schnelli\\
 				\footnotesize 
 				{KTH Royal Institute of Technology}\\
 				{\it schnelli@kth.se}
 			\end{center}
 		\end{minipage}
 	\end{minipage}
 \end{center}
 
 \bigskip
\bigskip

\begin{abstract}
		We study products of functions evaluated at self-adjoint polynomials in deterministic matrices and independent Wigner matrices; we compute the deterministic approximations of such products and control the fluctuations. We focus on minimizing the assumption of smoothness on those functions while optimizing the error term with respect to $N$, the size of the matrices. As an application, we build on the idea that the long-time Heisenberg evolution associated to Wigner matrices generates asymptotic freeness as first shown in \cite{thermalidsl}. More precisely  given~$P$ a self-adjoint non-commutative polynomial and $Y^N$ a $d$-tuple of independent Wigner matrices, we prove that the quantum evolution associated to the operator $P(Y^N)$ yields asymptotic freeness for large times.
\end{abstract}

 \vspace{5mm}
 
\footnotetext{\\ \indent \textit{Date}: March 14, 2024. \\
\indent\textit{Keywords}: Asymptotic freeness, concentration inequalities, quantum evolution. \\ \indent\textit{MSC 2020}: 	46L54, 60B20, 15B52.\\

\indent F.P. and K.S. are supported by the Swedish Research Council and the Knut and Alice Wallenberg Foundation.}

\section{Introduction and main results}

In this paper, we consider polynomials in several independent Wigner matrices $Y_1^N,\dots,Y^N_d$. In the early nineties, in his seminal work \cite{DVoi}, Voiculescu showed that the fitting framework to understand spectral properties of such polynomials is Free Probability, a non-commutative probability theory with a notion of freeness analogous to independence in classical probability. For Gaussian Wigner matrices he proved that for any given collection of polynomials $P_1,\dots,P_k$ and $i_j\in [1,d]$,  almost surely,
\begin{equation}
	\label{oirdkr}
	\lim_{N\to\infty} \ts_{N}\big( P_1(Y_{i_1}^N)\cdots P_k(Y_{i_k}^N)\big) = \tau\big(P_1(x_{i_1})\cdots P_k(x_{i_k})\big),
\end{equation}
where $\ts_N$ denotes the normalized trace on $\M_N(\C)$; $x_1,\dots,x_d \in \CC_d$ is a system of $d$ free semicircular variables and $\tau$ is the trace on the $\CC^*$-algebra $\CC_d$; see Definition \ref{3tra}. This result was generalized in numerous ways, to begin with in \cite{wigpague}, the author extended this result to non-Gaussian Wigner matrices coupled with deterministic matrices under certain assumptions; see also Theorem 5.4.5 of \cite{alice} for a proof without those assumptions on the deterministic matrices. It is further possible to consider continuous functions but only by approximating them with polynomials. For such results, one usually first proves the convergence in expectation, and then uses concentration inequalities to establish the almost sure convergence. One of the limitations of such combinatorial methods is that they are not well-suited to obtain estimates on the convergence rate in \eqref{oirdkr}. Strong quantitative estimates for smooth functions were first obtained in \cite{hT} by Haagerup and Thorbj\o rnsen in the Gaussian case. They studied not only the convergence of the trace but also of the operator norm.  These results were extended to include deterministic matrices in \cite{male,topoexp}, and to more general Wigner matrices in \cite{anderste,capbelin}. Beyond the question of polynomials, it is also worth noting that the case of non-commutative rational functions was tackled in \cite{ncfu,yinsldc}. Those papers however focus on studying the expectation rather than proving almost sure estimates, i.e.\ estimates on the difference between our random variable and its deterministic limit which hold with high probability. While measure concentration inequalities are usually sufficient to deduce almost sure results, one does not necessarily have such tools for general Wigner matrices. Indeed if the law of the entries of our random matrices satisfy a log-Sobolev inequality then so do their joint laws; see Section 2.3 of \cite{alice} for a good introduction on the topic. However, in this paper we do not make this kind of assumption on our random matrices. A possible way to address this issue would be the approach of \cite{ToddDavid} which uses mollified log-Sobolev inequalities to prove concentration estimates. In this paper we choose a more direct approach by studying high order moments. 

The main results of our paper are a high probability estimates for the trace as well as the matrix entries of any products of sufficiently smooth functions evaluated at polynomials in deterministic and independent Wigner matrices. We are in particular interested in optimizing the error term with respect to not only $N$ but also the derivatives of our functions. This will be especially useful for Theorem \ref{dliuf} later below. 

The following notions will be used in our statement of Theorem~\ref{main_result}. A square random matrix $Y=(Y)_{i,j}$ of size $N$ is a Wigner matrix if it is Hermitian or real symmetric and its entries are independent up to the symmetry constraints. The entries are assumed to be centered with variances $\E[|Y_{i,j}|^2]=\frac{1}{N}$, for all $i\not=j$, and they satisfy the moment bounds
\begin{align}
\sup_{N\in\N, 1\leq i,j\leq N} \E\left[\left| \sqrt{N} Y_{i,j}\right|^p\right] < \infty,
\end{align}
for any $p\ge 2$, c.f. Definition \ref{defwigner} below. Given a sequence of random variables $(\mathcal{X}_N,\mathcal{Y}_N)_{N\geq 1}$ as well as a sequence $(\varepsilon_N)_{N\geq 1}$ of non-negative real numbers, we say that with high probability
$$ \mathcal{X}_N = \mathcal{Y}_N + \mathcal{O}(\varepsilon_N),$$
if for any $k>0$, there exists a numerical constant $C$ such that, $\P(|\mathcal{X}_N-\mathcal{Y}_N|\geq C\ \varepsilon_N)\leq N^{-k}$ holds for any $N$ sufficiently large, see Definition~\ref{definition of high probability}.

\begin{theorem}\label{main_result}
	Let the following objects be given,
	\begin{itemize}
		\item $Y^N= (Y_1^N,\dots,Y_d^N)$ independent real symmetric or complex Hermitian Wigner matrices of size $N$ defined as in Definition \ref{defwigner} below, 
		\item $A^N=(A_1^N,\dots,A_q^N)$ deterministic matrices of size $N$, such that $\sup_{1\leq i\leq q,N\in\N^*} \norm{A_i^N} <\infty$,
		\item $x=(x_1,\dots,x_d)$ a system of free semicircular variables, free from $A^N$, i.e. they belong to the free product  $\mathcal{C}_d(x)*\M_N(\C)$ where $\CC_d$ is the $\mathcal{C}^*$-algebra generated by $x$ (see Definition \ref{3tra} below),
		\item $f_1,\dots,f_k$, $k\geq 1$, functions such that either $f_i = \id_{\R}$ or there exists a complex-valued measure $\mu_i$ such that
		\begin{equation}
			\label{okmskfds}
			\forall t\in \R,\quad f_i(t) = \int_{\R} e^{\i ty} d\mu_i(y),
		\end{equation}
		\item $P_1,\dots,P_k$ non-commutative polynomials, such that whenever $f_i\neq \id_{\R}$, $P_i$ is self-adjoint (see Subsection \ref{3poly}).
	\end{itemize}
	Then with the convention $\norm{f_i}_4 =1 $ if  $f_i = \id_{\R}$, and otherwise, 
	\begin{equation}
		\label{lkdld}
		\norm{f_i}_4 := \int_{\R} (1 + y^4)\ d|\mu_i|(y),
	\end{equation}
    where $|\mu|$ is the variation of the measure $\mu$. Then we have the following result. For any $\varepsilon>0$, with high probability,
	\begin{align}
		\label{trace}
		&\ts_{N}\left( f_1(P_1(Y^N,A^N)) \cdots f_k(P_k(Y^N,A^N)) \right)  \nonumber \\
		&\quad\quad\quad\quad\quad= \tau_{N}\left( f_1(P_1(x,A^N)) \cdots f_k(P_k(x,A^N)) \right) + \mathcal{O}\left(N^{\varepsilon}\frac{\max_{i} \norm{f_i}_4}{N}\right),
	\end{align}
	where $\ts_{N}$ is the normalized trace on $\M_N(\C)$, while $\tau_N$ is the trace on the free product $\mathcal{C}_d*\M_N(\C)$; see Definition \ref{3tra}. Moreover for $\x ,\y\in\C^N$, we also have with high probability that
	\begin{align}
		\label{scalar}
		&\left\langle \x, f_1(P_1(Y^N,A^N)) \cdots f_k(P_k(Y^N,A^N)) \y \right\rangle \nonumber \\
		&\quad\quad\quad\quad\quad= \left\langle \x, E_{\M_N(\C)}\big[f_1(P_1(x,A^N)) \cdots f_k(P_k(x,A^N))\big]\y \right\rangle + \mathcal{O}\left(N^{\varepsilon}\frac{\max_{i} \norm{f_i}_4}{\sqrt{N}}\norm{\x}_2\norm{\y}_2\right),
	\end{align}
	where $E_{\M_N(\C)}$ is the conditional expectation from $\mathcal{C}_d*\M_N(\C)$ to $\M_N(\C)$ (see Definition~\ref{conditionalmanqu} and the remarks that follow, notably for some special cases where this conditional expectation is easy to compute).
	Finally, if for every $i$, $Y_i^N$ is a GUE or GOE random matrix, then one can replace $\norm{f_i}_4$ by $\norm{f_i}_2$ in all of the previous formulas.
\end{theorem}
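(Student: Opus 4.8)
The plan is to treat the functions $f_i$ via their Fourier representation \eqref{okmskfds} so that each $f_i(P_i(Y^N,A^N))$ becomes an average of unitaries $e^{\i t_i P_i(Y^N,A^N)}$ against the measure $\mu_i$; by Fubini the left-hand side of \eqref{trace} becomes the integral over $(t_1,\dots,t_k)\in\R^k$ of $\ts_N(e^{\i t_1 P_1(Y^N,A^N)}\cdots e^{\i t_k P_k(Y^N,A^N)})$ against $d\mu_1(y_1)\cdots d\mu_k(y_k)$ (and similarly for \eqref{scalar} with the matrix entries). So the whole problem reduces to a single \emph{smooth} estimate: controlling the trace and the entries of a product of exponentials of self-adjoint polynomials, with an error that is \emph{polynomial in the times} $t_i$ — more precisely with a dependence of the form $\prod_i(1+|t_i|^4)$ so that after integrating one recovers $\prod_i\norm{f_i}_4$. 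The $\id_\R$ slots are handled separately since $P_i(Y^N,A^N)$ itself is just a polynomial; one expands it and incorporates it into the polynomial appearing in the neighbouring exponential, or treats it directly by the moment method below.

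The core estimate on $\ts_N(e^{\i t_1 P_1(Y^N,A^N)}\cdots)$ and on its matrix entries should be obtained by the "high order moments" approach announced in the introduction: fix a large integer $p$, write out $\E[|\ts_N(\cdots)-\tau_N(\cdots)|^{2p}]$ (respectively the centered entry to the $2p$), and bound it by $C_p\, N^{\varepsilon}\,p\text{-th power of the error rate}$, then apply Markov to get the $N^{-k}$ tail in Definition~\ref{definition of high probability}. To compare with the free model one interpolates between $Y^N$ and the free semicircular family $x$ — in the Gaussian case directly through the Dyson/Ornstein–Uhlenbeck flow and integration by parts (which is why GUE/GOE give the sharper $\norm{f_i}_2$, as only two derivatives in $t$ are produced per step), and in the general Wigner case by a Lindeberg swapping argument entry by entry, where the fourth moment of $\sqrt N Y_{ij}$ is exactly what forces $\norm{f_i}_4$ rather than $\norm{f_i}_2$. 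Each swap, or each step of the flow, costs a factor $1/N$ from the normalized trace (respectively $1/\sqrt N$ from a single entry), and produces derivatives of $t\mapsto e^{\i tP}$ that, because $\tfrac{d}{dt}e^{\i tP}$ brings down a factor $\i P$ and at most a polynomially-in-$t$ growing operator norm after the Duhamel expansion, are responsible for the $(1+|t|^4)$ weights; the combinatorics of the moment expansion has to be organized so these constants do not blow up with $p$ faster than $C^p$.

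The main obstacle I expect is precisely this bookkeeping: keeping the constant in $\E[|\cdots|^{2p}]\le C^p \varepsilon_N^{2p}$ uniform in $N$ while simultaneously (i) tracking the exact power of $N$ — one needs the genuinely sharp $1/N$ for the trace and $1/\sqrt N$ for entries, not $N^{-1/2}$ for the trace — which requires carefully identifying which graphical contributions in the moment expansion are planar/leading and showing the next order is suppressed; and (ii) controlling the operator norms of the intermediate polynomials $P_i(Y^N,A^N)$, which are not bounded almost surely but only with high probability, so one must run the whole argument on the good event $\{\norm{Y_i^N}\le 3\}$ and estimate the complement using the moment bounds on the entries. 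A secondary technical point is making the Duhamel expansions of the unitaries $e^{\i tP}$ quantitative enough that the $t$-dependence really collapses to $\prod_i(1+|t_i|^{4})$ and no worse, so that \eqref{lkdld} is the correct norm; for the GUE/GOE case the analogous statement with $\norm{f_i}_2$ needs the sharper observation that the Gaussian integration by parts only ever produces two factors of $t$ per $f_i$.
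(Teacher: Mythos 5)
Your Fourier decomposition of the $f_i$, the reduction to estimates on products of unitaries $e^{\i y_j P_j(Y^N,A^N)}$, and the high-moment Markov argument all match the paper's strategy. There is, however, a genuine gap: the proposal never correctly addresses how to pass from GUE matrices to free semicircular operators, which is a separate and non-trivial step. You write that one interpolates \emph{between $Y^N$ and the free semicircular family $x$}, and that the Gaussian case is handled \emph{directly through the Dyson/Ornstein--Uhlenbeck flow and integration by parts}. But the OU flow $X_t^N = e^{-t/2}Y^N + (1-e^{-t})^{1/2}X^N$ lives entirely in matrix space: as $t\to\infty$ it carries a Wigner matrix to a GUE matrix, not to a free semicircular element; for a GUE input the flow is stationary in law. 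Integration by parts then yields the Schwinger--Dyson relations for the GUE moments, but that alone does not identify $\E\bigl[\ts_N\bigl(Q(X^N,A^N)\bigr)\bigr]$ with $\tau_N\bigl(Q(x,A^N)\bigr)$ together with a quantitative $\mathcal{O}(y^4/N^2)$ error. The paper runs the two reductions separately: Propositions~\ref{concentrtrace} and~\ref{concentrcoeff} use the OU flow plus the cumulant expansion of Proposition~\ref{cuulantmexpansion} to compare Wigner with GUE, and then a free-stochastic-calculus interpolation between $X^N$ and $x$ inside $\mathcal{C}_d*\M_N(\C)$ (Lemma~3.6 of \cite{topoexp}) supplies the second comparison. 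Without that second ingredient your argument does not close.

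Two smaller remarks. Where you propose Lindeberg swapping entry by entry for the Wigner-to-GUE step, the paper instead differentiates the OU flow in $t$ and applies the cumulant expansion to third order; the two are comparable in power, and the $y^4$ loss in the general Wigner case is indeed the third-order cumulant remainder combined with the entry-replacement bound, but the continuous flow sidesteps the swapping combinatorics you flag as the main obstacle. Also, rather than truncating to a good event $\{\norm{Y_i^N}\le 3\}$ to control intermediate operator norms, the paper bounds moments of traces and of operator norms directly via Lemmas~\ref{docsocdssdc} and~\ref{docsocscddssdc} (which rest on strong-convergence inputs from \cite{alice}), so no conditioning is needed.
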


Theorem \ref{main_result} calls for the following remarks:
\begin{itemize}
	\item It is important to note that unlike previous works, we do not study the Stieltjes transform to get Equations \eqref{trace} and \eqref{scalar}, instead we study the Fourier transform of the functions $f_i$. A different strategy would be to use Helffer-Sjöstrand calculus after studying a product of resolvent, i.e of terms of the form $(z_i-P_i(Y^N,A^N))^{-1}$ for some polynomials $P_i$ and complex numbers $z_i$. But then the error term will depend on $\max_{i}|\Im z_i|^{-k}$ which translates into a bound in terms of $\max_{i} \norm{f_i}_k$. However as one can see in Equations \eqref{trace} and \eqref{scalar}, this dependence on $k$ is not optimal as soon as $k$ is larger than $4$.
	\item The norm $\norm{\cdot}_4$ is the one associated to the fourth Wiener space $W_4(\R)$; we refer to \cite{nikitopoulos}, Section~3.2, for more information on the topic. In particular, if $f$ and its Fourier transform are integrable, then~$f$ satisfies Assumption \eqref{okmskfds} with the measure $d\mu(y) = \widehat{f}(y)\ dy$. Then assuming that $\norm{f}_4$ is finite implies that $f$ is four times differentiable and
	$$ \norm{f}_4 = \int_{\R} (1+y^4) |\widehat{f}(y)|\ dy = \int_{\R} |\widehat{f}(y)|+|\widehat{f^{(4)}}(y)|\ dy.$$
	Thus heuristically $\norm{f}_4$ is related to the fourth derivative of $f$. 
	\item Note that in Theorem \ref{main_result}, one can chose the functions $(f_i)_{i\in[1,k]}$ to depend on $N$, this implies that the norm in the error term will also depend on $N$. This allows us for example to consider mesoscopic test functions of the form
	$$ f_i^N: x\mapsto g_i\big( N^a(x-E) \big), $$
	where $E\in\R, a\in [0,1/2)$ and the functions $g_i$ satisfy Assumptions \ref{okmskfds} and \ref{lkdld}. However the error term will be of order $N^{4a+\varepsilon-1}\max_{i} \norm{g_i}_4$ (or $N^{2a+\varepsilon-1}\max_{i} \norm{g_i}_2$ if for every $i$, $Y_i^N$ is a GUE or GOE random matrix) instead of $N^{\varepsilon-1}\max_{i} \norm{f_i}_4$. Besides the deterministic approximation will depend on $N$ in Equations \eqref{trace} and \eqref{scalar}.
	\item The estimates in~\eqref{trace} and~\eqref{scalar} are optimal in terms of the powers in $N$ since in the case of GUE random matrices, the difference between the left hand side and the right hand side of Equation \eqref{trace} multiplied by $N$ converges in law towards a Gaussian random variable, hence it has to be of order $1$ with respect to $N$. Indeed, thanks to Theorem 3.4 of \cite{topoexp}, one can replace the trace on the right hand side by the expectation of the left hand side. Then proving such a central limit theorem is a well-known problem, see Theorem 7.6 of \cite{aliceSDy} for the case of polynomials. However, we believe that the optimal exponent of $y=1+\max |y_i|$ in~Proposition~\ref{concentrtrace} and~\ref{concentrcoeff} should be two, as it is the case for the Gaussian ensembles, see also Theorem~2.6 in~\cite{thermalidsl} for $k\le 4$. Thus in turn we expect that the estimates in~\eqref{trace} and \eqref{scalar} hold for functions in the second Wiener space; i.e. with $\norm{f}_2$ instead of $\norm{f}_4$. 
	\item Lastly, we remark that the Wigner matrices $Y^N$ can be both real symmetric or complex Hermitian, or a mix of them in~\eqref{trace} and~\eqref{scalar}. Further, we do not need the matrix entries to have the same law and the variances of the diagonal entries are only assumed to be of order $1/N$.
\end{itemize}

One can notably compare Theorem \ref{main_result} with Theorem 2.6 of \cite{thermalidsl} and Corollary 2.7 of \cite{traceless}. The biggest difference in our work is that we consider any polynomials $P_1,\dots,P_k$ whereas in \cite{thermalidsl} the authors considered monomials of degree $1$, i.e. $P_i(Y^N,A^N) = Y_{j_i}^N$ for some $j_i\in[1,d]$, or $P_i(Y^N,A^N) = A_{j_i}^N$ for some $j_i\in[1,q]$. For such products the key result to prove Theorem 2.6 in \cite{thermalidsl} is a multi-resolvent local law (Theorem~3.4 of their paper): Let $G^N(z):=(Y^N-z)^{-1}$, $z\in\C\backslash \R$, denote the resolvent or Green function of a single Wigner matrix $Y^N$. Then the Green function $G^N(z)$ is well approximated by $m_{sc}(z)I^N$ with $m_{sc}(z)$ the Stieltjes transform of the semicircle law, in averaged sense~\cite{ErdoesKnowlesYauYin,lclclc,rigidity} as well as in isotropic sense~\cite{Alex+Erdos+Knowles+Yau+Yin,KnowlesYinIsotropic}, for $|\Im z|\gg N^{-1}$, i.e.\ down to local scales slightly above the typical eigenvalue spacing. Theorem~3.4 in~\cite{thermalidsl} and Theorem 2.5 in [10] give the deterministic approximation for multi-resolvents of the form $G^N(z_1)A_1^NG^N(z_2)A_2^N\cdots A_{k-1}^NG^N(z_k) A_{k}^N$ with optimal error terms and on local scales; where $(A_i^N)$ are as in Theorem~\ref{main_result}. The Helffer-Sj\"ostrand calculus allows then to extend this local law to observables of the form $f_1(Y^N)A_1^Nf_2(Y^N)\cdots f_k(Y^N)A_k^N$, for Sobolev functions $(f_i)$, in averaged and isotropic sense, yielding the results in Theorem~2.6 of~\cite{thermalidsl}. Yet, local laws for a single Green function of a polynomial in Wigner matrices~\cite{AndersonAnti,lcp,nemish2,nemish1} are difficult to derive, partly due to hard to check stability conditions stemming from the linearization trick~\cite{hT}. In particular for our desired results the local laws needed to be established up to the spectral edges, which for general polynomials seems challenging, not to speak of multi-resolvent local laws.

We circumvent this difficulty by working with the Fourier transform instead of the Stieltjes transform. A main novelty are concentration estimates for quantities of the form $e^{\i P_1 y_1}R_1\cdots e^{\i P_k y_k}R_k$ with $(P_i)$ and $(R_i)$ polynomials in $Y^N$ and $A^N$, and $(y_i)\in\R^k$; see Propositions~\ref{concentrtrace} and~\ref{concentrcoeff} below. These concentration results are obtained by a long-time continuous interpolation between Wigner matrices and the GUE applied to the non-commutative setting using recursive moment estimates~\cite{moment,LeeSchnellisparse}; see~\cite{KnowlesYinInterpolation,deformed} for long-time interpolations for Green functions of single Wigner matrices. This strategy does not rely on the linearization of non-commutative polynomials and hence avoids stability issues. The concentration estimates in~Propositions~\ref{concentrtrace} and~\ref{concentrcoeff} are then combined with results in~\cite{topoexp} to replace GUE matrices with semicircular variables by interpolating them with the help of free stochastic calculus. This strategy of interpolating between random matrices and free variables was first developed in \cite{un} before being refined to get a better estimate of the remainder term in \cite{topoexp}.  Via the Fourier transform our results in Theorem~\ref{main_result} follow for functions in the fourth Wiener space. 

 Another difference to~\cite{thermalidsl} is that our method focuses on algebraic rather than combinatorial aspects. In~\cite{thermalidsl} the main results, including the multi-resolvent local law, are stated using advanced combinatorics, such as free cumulant functions, partial traces or the Kreweras complement. A key insight of~\cite{thermalidsl} was to connect resolvent expansions to non-crossing partitions and free cumulants. In this paper, we rely instead on the algebraic formalism of free probability, e.g.\ free products of $C^*$-algebras or conditional expectations, that allows us to formulate the convergence results in~\eqref{trace} and~\eqref{scalar} in all generality, i.e.\ with $P_i$ arbitrary polynomials in $Y^N$ and $A^N$. In Subsection~\ref{3deffree}, we outline how to pass from an algebraic to a combinatorial formulation in some special cases.

Motivations for Theorem \ref{main_result} come from free probability and mathematical physics. As explained in~\cite{thermalidsl}, the RAGE theorem (see \cite{rageop}, Theorem 5.8) states that given $H$ a self-adjoint operator on an infinite dimensional Hilbert space, $\phi$ a vector state in the continuous spectral subspace of $H$, and $C$ a compact operator, then the Heisenberg time evolution of $C$ vanishes on $\phi$ when $t$ goes to infinity, more precisely, the Ces\`aro mean of $\langle \phi, e^{\i t H} C e^{-\i t H} \phi \rangle$ vanishes for large $t$. Thus heuristically the RAGE theorem describes the asymptotic behavior of the Heisenberg time evolution under some assumptions on $C$ and $\phi$. For example one may consider the operator $H=P(x_1,\dots,x_d)$ where $P$ is a polynomial and $(x_i)$ are free semicircular variables. It is well-known that the spectrum of a polynomial in independent Wigner matrices behaves similarly to the one of the same polynomial evaluated in free semicircular variables, see for example Theorem 5.4.2 in \cite{alice}. Consequently one expects that up to an error which depends on $N$, the Heisenberg time evolution associated with the operator $H=P(Y^N_1,\dots,Y^N_d)$ will behaves similarly. Indeed, from Theorem \ref{main_result}, we get that if $P$ is such a polynomial, then for $t$ and $N$ large enough, we have for any bounded sequence of deterministic matrices $(B^N)$ and $(C^N)$, as well as vectors $\x,\y\in\C^N$, that
\begin{equation}\label{lila1}
	\left\langle \x, e^{\i t P(Y^N)} C^N e^{-\i t P(Y^N)} \y \right\rangle \simeq  \ts_N\left( C^N\right) \times \langle \x,\y\rangle,
\end{equation}
\begin{equation}
	\label{frefewvo}
	\ts_N\left( e^{\i t P(Y^N)} C^N e^{-\i t P(Y^N)} B^N \right) \simeq  \ts_N\left( C^N\right) \times \ts_N\left(B^N \right) .
\end{equation}
That the quantities on the right sides do not necessarily converge towards $0$ comes from the fact that~$(C^N)$ may not behave asymptotically as a compact operator, indeed one could for example take~$(C^N)$ the identity matrices. Equations~\eqref{lila1} and~\eqref{frefewvo} indicate decays of correlations under the long-time Heisenberg evolution, a phenomenon referred to as thermalization in \cite{thermalidsl}. More generally, the authors showed in Corollary~2.12 of \cite{thermalidsl}, that the long-time Heisenberg evolution associated to a Wigner matrix generates asymptotic freeness. 

The first link between Free Probability and Random Matrix Theory was made by Voiculescu in the nineties. In \cite{DVoi} he showed that the trace of any polynomial in GUE random matrices converges towards the trace of the same polynomial but evaluated in free semicircular operators. He introduced the notion of asymptotic freeness accordingly: A sequence of families of random matrices $(X_1^N,\dots,X_d^N)$ is asymptotically free if for every collection of polynomials $P_1,\dots,P_n\in\C[X]$ such that $\ts_N(P_j(X_{i_j}^N))$ converges towards $0$, then if $i_j\neq i_{j+1}$ for every $j$, we have that
$$\lim_{N\to\infty} \ts_N\left( P_1(X_{i_1}^N) \cdots P_n(X_{i_n}^N) \right) =0.$$
In particular, if $(A^N)$ and $(B^N)$ are asymptotically free, then
$$\lim_{N\to\infty} \ts_N\Big( \left(A^N-\ts_N(A^N)\right) \left(B^N-\ts_N(B^N)\right)  \Big) =0.$$
Consequently, Equation \eqref{frefewvo} is a corollary of the asymptotic freeness of $e^{\i t P(Y^N)} C^N e^{-\i t P(Y^N)}$ and $B^N$. 

Given two sequences of real numbers $(u_N)$ and $(v_N)$, we write $u_N\ll v_N$ if $u_N/v_N$ converges towards~$0$. 

\begin{theorem}
	\label{dliuf}
	Let the collection of deterministic matrices $A^N=(A_1^N,\dots,A_k^N)$  and of Wigner matrices $Y^N= (Y_1^N,\dots,Y_d^N)$ be given as in Theorem~\ref{main_result}. Let $P$ be a non-constant self-adjoint non-commutative polynomial in $d$ variables. Besides we assume that for every $i$, $A_i^N$ converges in distribution towards a non-commutative random variable $a_i$; see Definition~\ref{3freeprob}. Further, let  $y_1^N\leq\dots\leq y_k^N\in\R$ be such that for any $i\in [1,k-1]$, $1 \ll y_{i+1}^N-y_i^N \ll N^{1/4}$. Then with 
	$$a_i^N:= e^{\i y_i^N P(Y^N)} A_i^N e^{-\i y_i^N P(Y^N)},$$ 
	almost surely the family of non-commutative random variables $(a_1^N,\dots,a_k^N)$ converges jointly in distribution towards $(a_1,\dots,a_k)$ where the $(a_i)$ are free. Moreover, if for every $i$, $Y_i^N$ is a GUE or GOE random matrix, then one can replace $N^{1/4}$ by $N^{1/2}$ in the previous assumptions, i.e. one can assume that for any $i\in [1,k-1]$, $1 \ll y_{i+1}^N-y_i^N \ll N^{1/2}$.
\end{theorem}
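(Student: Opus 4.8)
The plan is to reduce the statement about joint convergence in distribution of the family $(a_1^N,\dots,a_k^N)$ to a statement about traces of words in these variables, and then to evaluate each such trace using Theorem~\ref{main_result}. By definition of convergence in distribution and of asymptotic freeness, it suffices to show that for every word $w(a_1^N,\dots,a_k^N)$ obtained as a product $Q_1(a_{i_1}^N)\cdots Q_m(a_{i_m}^N)$ with $Q_j$ polynomials and $i_j\neq i_{j+1}$, we have $\ts_N(w)\to \tau(w')$ almost surely, where $w'$ is the same word in the limiting free variables $(a_1,\dots,a_k)$; equivalently, that the centered variables satisfy the vanishing-of-alternating-products condition. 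The first step is the algebraic observation that, since conjugation by the unitary $U_i^N:=e^{\i y_i^N P(Y^N)}$ is a $\ast$-automorphism, one has $Q_j(a_{i_j}^N) = U_{i_j}^N\, Q_j(A_{i_j}^N)\, (U_{i_j}^N)^{*}$, so that by the trace property
\[
\ts_N\big(Q_1(a_{i_1}^N)\cdots Q_m(a_{i_m}^N)\big) = \ts_N\Big( U_{i_1}^N B_1^N (U_{i_1}^N)^* U_{i_2}^N B_2^N (U_{i_2}^N)^*\cdots U_{i_m}^N B_m^N (U_{i_m}^N)^*\Big),
\]
with $B_j^N:=Q_j(A_{i_j}^N)$ uniformly bounded deterministic matrices. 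Writing $U_{i_j}^N(U_{i_{j-1}}^N)^* = e^{\i y_{i_j}^N P(Y^N)} e^{-\i y_{i_{j-1}}^N P(Y^N)}$, which is $f(P(Y^N))$ for $f = \id$-type exponentials with $\mu$ a Dirac mass, this becomes exactly a trace of the form covered by~\eqref{trace} with $P_\ell = P$ for all $\ell$ and with the $f_\ell$ being the functions $t\mapsto e^{\i (y_{i_{\ell+1}}^N - y_{i_\ell}^N) t}$ interleaved with the $B_j^N$; note $\norm{f_\ell}_4 = 1$ for these Dirac measures, or more precisely the variation norm is controlled because the relevant measure is $\delta$ at the point $y_{i_{\ell+1}}^N - y_{i_\ell}^N$, so $\norm{f_\ell}_4 = 1 + (y_{i_{\ell+1}}^N-y_{i_\ell}^N)^4$.

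The second step is to apply Theorem~\ref{main_result}: up to an error of order $N^{\varepsilon}\,\big(1+\max_\ell|y_{i_{\ell+1}}^N-y_{i_\ell}^N|\big)^4 / N$ with high probability, the trace above equals $\tau_N$ of the corresponding word in the free semicircular system $x$ and the matrices $A^N$, i.e.\ with $\widetilde{U}_i := e^{\i y_i^N P(x)}$ we get $\tau_N(\widetilde U_{i_1} B_1^N \widetilde U_{i_1}^* \cdots \widetilde U_{i_m} B_m^N \widetilde U_{i_m}^*)$. Because $1 \ll y_{i+1}^N - y_i^N \ll N^{1/4}$, the fourth power of the gaps is $o(N)$, hence the error term is $o(1)$; in the GUE/GOE case one uses the $\norm{\cdot}_2$ version of the estimate, and the gaps only need to be $\ll N^{1/2}$. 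Using a standard Borel--Cantelli argument along a polynomial subsequence together with the high-probability decay faster than any power of $N$ (and continuity/boundedness to interpolate between subsequence values), one upgrades the high-probability convergence to almost sure convergence, simultaneously for the countable family of all words with rational coefficients, which suffices.

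The third and genuinely substantive step, which I expect to be the main obstacle, is to show that the free-probabilistic quantity
\[
\tau_N\big(\widetilde U_{i_1} B_1^N \widetilde U_{i_1}^* \widetilde U_{i_2} B_2^N \widetilde U_{i_2}^* \cdots \widetilde U_{i_m} B_m^N \widetilde U_{i_m}^*\big)
\]
converges, as $N\to\infty$ and then as the gaps $y_{i+1}^N - y_i^N\to\infty$, to the value dictated by freeness of $(a_1,\dots,a_k)$ — that is, that \emph{conjugating blocks by free-semicircular exponentials with widely separated time parameters asymptotically frees them}. Here one replaces $A_i^N$ by its limit $a_i$ (using $A_i^N\to a_i$ in distribution and the fact that $\widetilde U_{i}$ depends on $A^N$ only through the trivial way it enters the free product, so $P(x)$ is free from all the $a_i$) and studies, in the limit von Neumann algebra, the family $u_{t_i} a_i u_{t_i}^*$ with $u_t = e^{\i t h}$, $h=P(x)$ a self-adjoint element free from the $a_i$, with $t_i$ chosen so that consecutive differences tend to infinity. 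The key point is that $h=P(x)$ has a non-atomic spectral measure (since $P$ is non-constant and the $x_i$ are free semicirculars, so $P(x)$ has absolutely continuous distribution — this must be argued, e.g.\ via known regularity results for polynomials in free semicirculars), so that $e^{\i t h}$ exhibits the decay of correlations from the RAGE-type mechanism described in the introduction, and one shows by induction on $m$, splitting the alternating word and using the mixing $\tau(u_{t}\, c\, u_{t}^*\, d) \to \tau(c)\tau(d)$ as $t\to\infty$ for $c,d$ in the algebra, that the alternating products of centered elements vanish. Making this precise — controlling the iterated limits, showing the relevant weak-mixing statement for $e^{\i tP(x)}$ acting by conjugation, and checking that the ordering $y_1^N\le\dots\le y_k^N$ together with $y_{i+1}^N - y_i^N\to\infty$ forces all pairwise gaps to infinity so that every pair of distinct blocks decouples — is the crux of the argument and where the hypothesis $P$ non-constant is essential.
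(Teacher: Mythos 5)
Your plan has the same overall skeleton as the paper's proof: reduce the joint convergence to traces of alternating words, rewrite each such word via cyclicity as an interleaved product of exponentials $e^{\i (y_{i_j}^N-y_{i_{j-1}}^N) P(\cdot)}$ and deterministic blocks $Q_j(A_{i_j}^N)$, apply Theorem~\ref{main_result} (with $\norm{f}_4 = 1+y^4$ for the Dirac measure) to replace $Y^N$ by the free semicircular system $x$ at cost $\mathcal{O}(N^{\varepsilon-1}\max_{i\neq j}|y_i^N-y_j^N|^4) = o(1)$, upgrade to almost sure via Borel--Cantelli, and then establish that the resulting trace in $(\CC_d * \M_N(\C), \tau_N)$ tends to the value dictated by freeness. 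Steps (1) and (2) match the paper essentially verbatim, including the observation that $|y_i^N-y_j^N|\to\infty$ for all pairs $i\neq j$ and the use of Shlyakhtenko--Skoufranis for the absolute continuity of the spectral measure of $P(x)$ followed by Riemann--Lebesgue.

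Where you diverge is in step (3), which you correctly identify as the crux. You propose to prove, by induction on the word length, a weak-mixing statement $\tau(u_t\,c\,u_t^*\,d)\to\tau(c)\tau(d)$ for $u_t = e^{\i t P(x)}$ and $c,d$ free from $P(x)$, and then to iterate this to kill alternating products of centered elements. This is correct in spirit — the base case follows by freeness plus $\tau(u_t)\to 0$, yielding $\tau(u_t c u_t^* d) = \tau(c)\tau(d) + |\tau(u_t)|^2(\tau(cd)-\tau(c)\tau(d))$ — but you leave the induction step unspecified, and iterating this cleanly for general alternating words requires care (one must track all the cross-terms). The paper avoids the induction entirely by expanding $\tau_N(u_1^N\cdots u_{2p}^N)$ over non-crossing partitions via the free moment--cumulant formula (Proposition 11.4 of Nica--Speicher), invoking the vanishing of mixed cumulants (Theorem 11.16) to restrict to partitions whose blocks are all-odd or all-even, and then observing that every such non-crossing partition must contain a singleton block $\{j\}$ with $j\neq 1$; the corresponding factor $\kappa_1(u_j^N) = \tau_N(u_j^N)$ then tends to $0$ either by the centering assumption (even $j$) or by Riemann--Lebesgue (odd $j$). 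Both routes use the same analytic inputs, but the free-cumulant route is noticeably tighter: it turns the whole dynamical/mixing question into a one-line combinatorial fact about non-crossing partitions, and it handles an arbitrary alternating word in a single pass without any induction or bookkeeping of iterated limits. If you want to complete your own route you would need to state and prove the mixing lemma for general alternating words, which in effect reconstructs a special case of the free-cumulant machinery by hand.
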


One can notably compare this theorem with Corollary 2.12 of \cite{thermalidsl} which studied the case of a single Wigner matrix, i.e. $d=1$ and $P=X_1$ in the theorem above. Besides, the equivalents of Equations \eqref{trace} and \eqref{scalar} are given in Theorem 2.6 of \cite{thermalidsl} (respectively Corollary 2.7 of \cite{traceless}), there the error term depends on the $k$-th Sobolev norm (respectively $k/2$) of the functions $(f_i)$ where $k$ is the number of functions considered. In the case of the function $f_y:t\in\R\mapsto e^{\i y t}$, the k-th Sobolev norm of $f_y$ is of order $y^k$ whereas if we use Theorem \ref{main_result}, we will get that $\norm{f_y}_4$ is of order~$y^4$. While the former is better for smaller~$k$, this dependence on $k$ requires one to assume in Corollary 2.12 of \cite{thermalidsl} that $1 \ll y_{i+1}^N-y_i^N \ll N^{1/k}$ for any~$k$, which can be improved to $N^{2/k}$ thanks to Remark 2.8 of \cite{traceless}. Consequently, in order to derive the simultaneous convergence of every moment -- i.e.\ the convergence in distribution as defined in Definition \ref{3freeprob} -- out of this result, one has to assume that the time differences go to infinity slower than any power of $N$.

One can also wonder about the thermalization decay rates in Theorem \ref{dliuf}. Indeed, given a polynomial~$P$ as defined in this theorem, the convergence rates depend on the behavior of the Fourier transform of the limiting spectral measure of~$P$ evaluated in free semicircular variables. For example, in the case of a single Wigner matrix, the spectral measure at the limit is the semicircle law which has square root decay at the spectral edges, hence this yields the exponent~$\delta_P=3/2$ in the equation below; as in Corollary 2.10 of~\cite{thermalidsl}. For general $P$, thanks to Theorem~1.1(5-6) of~\cite{density}, one can show that there exists a constant~$\delta_P>0$, such that for any polynomial~$Q$, one has that for any~$\varepsilon>0$, with high probability,
\begin{align*}
	\tau_N\left( Q(a_1^N,\dots,a_k^N)\right) = \tau\big( Q(a_1,\dots,a_k) \big) + \mathcal{O}\bigg( \frac{N^\varepsilon}{N}\max\limits_{i\neq j} | y_i^N-y_j^N |^4 + \frac{1}{\min\limits_{i\neq j} | y_i^N-y_j^N |^{\delta_P}}\bigg). 
\end{align*}
This estimate could be further improved depending on the polynomial~$Q$, as in Corollary 2.11 of~\cite{thermalidsl}. However, for a general given polynomials $P$, it is hard to compute the corresponding exponent~$\delta_P$, thus we use the Riemann-Lebesgue lemma in the last step in the proof of Theorem~\ref{dliuf}. This is why we do not give the precise thermalization decay rates unlike Corollaries 2.9, 2.10 and~2.11 in~\cite{thermalidsl}. Finally, note that although it does not yield an exact formula, it is possible to use the algorithm of Theorem 4.1 in~\cite{calculdensalgo} to approximate the spectral distribution of a given polynomial in semicircular variables.

We conclude this section by summarizing the organization of the paper. In Section~\ref{3definit}, we first recall definitions from random matrix theory and free probability theory. Then in Subsection~\ref{3poly}, we introduce the formalism suited to handle polynomials in non-commutative random variables and their derivatives. In Subsection~\ref{sec:SD}, we recall the Schwinger-Dyson equations for Gaussian Wigner matrices as well as the cumulant expansions which can be viewed as a generalization of the Schwinger-Dyson equations for Wigner matrices. In Section~\ref{sec:concentration}, we derive key concentration estimates for the trace of products of exponentials of polynomials in Wigner matrices. This is accomplished by a continuous interpolation between Wigner matrices and GUE matrices in combination with a recursive moment estimate. In Section~\ref{sec:proof main theorems}, we then combine the concentration estimates from Section~\ref{sec:concentration} with the main results from~\cite{topoexp} to prove our main results in Theorem~\ref{main_result} and Theorem~\ref{dliuf}.

\section{Framework and standard properties}
\label{3definit}

\subsection{Definitions from Random Matrix Theory}

In this subsection we define the main object of our study, the Wigner matrix. We make the assumption that the matrix entries are independent and have finite moments to every order, however we do not need to assume that they have the same law.

\begin{defi}
	\label{defwigner}
	We say that a square random matrix $Y$ of size $N$ is a Wigner matrix if it is a Hermitian or symmetric matrix whose entries are independent up to the Hermitian symmetry and such that for any $i,j$, $\E[Y_{i,j}]= 0$, and additionally if $i\neq j$, $\E[|Y_{i,j}|^2]=N^{-1}$. Besides we assume that for any $p\in \N$,
	\begin{equation}
		\label{assump}
		\sup_{N\in\N, 1\leq i,j\leq N} \E\left[\left| \sqrt{N} Y_{i,j}\right|^p\right] < \infty.
	\end{equation}
\end{defi}

The assumption on the large moments being finite is natural as we are working with polynomials. Although since in Theorem \ref{main_result} we give equations which hold true with high probability rather than estimates on the expectation, there might be a way around this assumption by using the truncation method (see Section 2 of \cite{anderste}), but this would reflect in the error terms of Equations \eqref{trace} and \eqref{scalar}.

Besides, we use two specific types of Wigner matrices whose entries are all Gaussian, the Gaussian Unitary Ensemble (GUE) and the Gaussian Orthogonal Ensemble (GOE).

\begin{defi}
	\label{3GUEdef}
	A GUE random matrix $X^N$ of size $N$ is a Hermitian matrix whose entries are random variables with the following laws:
	\begin{itemize}
		\item For $1\leq i\leq N$, the random variables $\sqrt{N} X^N_{i,i}$ are independent centered Gaussian random variables of 
		variance $1$.
		\item For $1\leq i<j\leq N$, the random variables $\sqrt{2N}\ \Re{X^N_{i,j}}$ and $\sqrt{2N}\ \Im{X^N_{i,j}}$ are independent 
		centered Gaussian random variables of variance $1$, independent of  $\left(X^N_{i,i}\right)_i$.
	\end{itemize}
\end{defi}

\begin{defi}
	\label{3GOEdef}
	A GOE random matrix $X^N$ of size $N$ is a symmetric matrix whose entries are random variables with the following laws:
	\begin{itemize}
		\item For $1\leq i\leq N$, the random variables $\sqrt{\frac{N}{2}} X^N_{i,i}$ are independent centered Gaussian random variables of variance $1$.
		\item For $1\leq i<j\leq N$, the random variables $\sqrt{N} X^N_{i,j}$ are independent centered Gaussian random variables of variance $1$, independent of  $\left(X^N_{i,i}\right)_i$.
	\end{itemize}
\end{defi}

Finally, we conclude this subsection by defining a notation that we will use regularly in the rest of the paper.

\begin{defi}\label{definition of high probability}
	 Given a sequence of random variables $(\mathcal{X}_N,\mathcal{Y}_N)_{N\geq 1}$ as well as a sequence $(\varepsilon_N)_{N\geq 1}$ of non-negative real numbers, we say that with high probability
$$ \mathcal{X}_N = \mathcal{Y}_N + \mathcal{O}(\varepsilon_N),$$
if for any $k>0$, there exists a numerical constant $C$ such that,
$$\P(|\mathcal{X}_N-\mathcal{Y}_N|\geq C\ \varepsilon_N)\leq N^{-k}$$
holds for any $N$ sufficiently large.
\end{defi}

\subsection{Definitions from Free Probability Theory}
\label{3deffree}

In order to be self-contained, we begin by recalling the following definitions from free probability.

\begin{defi}\label{3freeprob}
\phantom{X}	
	\begin{itemize}
		\item A $\mathcal{C}^*$-probability space $(\A,*,\tau,\norm{.}) $ is a unital $\mathcal{C}^*$-algebra $(\A,*,\norm{.})$ endowed with a state $\tau$, i.e. a linear map $\tau : \A \to \C$ satisfying $\tau(1_{\A})=1$ and $\tau(a^*a)\geq 0$ for all $a\in \A$. In this paper we always assume that $\tau$ is a trace, i.e. that it satisfies $\tau(ab) = \tau(ba) $ for any $a,b\in\A$. An element of $\A$ is called a 
		non-commutative random variable. We will always work with a faithful trace, namely, for $a\in\A$, $\tau(a^*a)=0$ if and only if $a=0$.
		
		\item Let $\A_1,\dots,\A_n$ be $*$-subalgebras of $\A$, having the same unit as $\A$. They are said to be free if for all $k$, for all $a_i\in\A_{j_i}$ such that $j_1\neq j_2$, $j_2\neq j_3$, \dots , $j_{k-1}\neq j_k$:
		\begin{equation}
			\label{induction}
			 \tau\Big( (a_1-\tau(a_1))(a_2-\tau(a_2))\cdots (a_k-\tau(a_k)) \Big) = 0.
		\end{equation}
		Families of non-commutative random variables are said to be free if the $*$-subalgebras they generate are free.
		
		\item Let $ A= (a_1,\ldots ,a_k)$ be a $k$-tuple of non-commutative random variables. The joint distribution of the family $A$ is the linear form $\mu_A : P \mapsto \tau\big[ P(A, A^*) \big]$ on the set of polynomials in $2k$ non-commutative variables. By convergence in distribution, for a sequence of families of variables $(A_N)_{N\geq 1} = (a_{1}^{N},\ldots ,a_{k}^{N})_{N\geq 1}$ 
		in $\mathcal C^*$-algebras $\big( \mathcal A_N, ^*, \tau_N, \norm{.} \big)$,
		we mean the pointwise convergence of
		the map 
		$$ \mu_{A_N} : P \mapsto \tau_N \big( P(A_N, A_N^*) \big).$$		
		\item A family of non-commutative random variables $ x=(x_1,\dots ,x_d)$ is called 
		a free semicircular system if the non-commutative random variables are free, 
		self-adjoint ($x_i=x_i^*$), and for all $k\in\N$ and $i\in [1,d]$, one has
		\begin{equation*}
			\tau( x_i^k) =  \int_{\R} t^k d\sigma(t),
		\end{equation*}
		with $d\sigma(t) = \frac 1 {2\pi} \sqrt{4-t^2} \ \mathbf 1_{|t|\leq2} \ dt$ the semicircle distribution.
		
	\end{itemize}
	
\end{defi}

It is important to note that thanks to \cite[Theorem 7.9]{nica_speicher}, which we recall next, one can consider free copies of any non-commutative random variable.

\begin{theorem}
	\label{3freesum}
	
	Let $(\A_i,\phi_i)_{i\in I}$ be a family of $\mathcal{C}^*$-probability spaces such that the functionals $\phi_i : \A_i\to\C$, $i\in I$, are faithful traces. Then there exist a $\mathcal{C}^*$-probability space $(\A,\phi)$ with $\phi$ a faithful trace, and a family of norm-preserving unital $*$-homomorphism $W_i: \A_i\to\A$, $i\in I$, such that:
	
	\begin{itemize}
		\item $\phi \circ W_i = \phi_i$, $\forall i \in I$.
		\item The unital $\mathcal{C}^*$-subalgebras $W_i(\A_i)$, $i\in I$, form a free family in $(\A,\phi)$.
	\end{itemize}
	We will usually denote $\A$ by $\underset{i\in I}{*}\A_i$ or simply $\A_1*\A_2$ when $I$ only has two elements.
\end{theorem}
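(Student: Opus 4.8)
The plan is to construct $\A$ concretely as a $\mathcal{C}^*$-subalgebra of the bounded operators on a free product of Hilbert spaces, following Voiculescu's construction. For each $i\in I$, let $(\mathcal{H}_i,\pi_i,\xi_i)$ be the GNS triple of $(\A_i,\phi_i)$: thus $\pi_i:\A_i\to B(\mathcal{H}_i)$ is a unital $*$-representation, $\xi_i$ is a cyclic unit vector, and $\phi_i(a)=\langle\xi_i,\pi_i(a)\xi_i\rangle$. Since $\phi_i$ is faithful, $\pi_i$ is injective and $\xi_i$ is separating for $\pi_i(\A_i)$; and since $\phi_i$ is a trace, the GNS conjugation $J_i$ on $\mathcal{H}_i$ intertwines the left and right actions of $\A_i$. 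Writing $\mathcal{H}_i^\circ=(\C\xi_i)^\perp\subseteq\mathcal{H}_i$, set
\[
\mathcal{H}\;=\;\C\Omega\;\oplus\;\bigoplus_{n\geq1}\ \ \bigoplus_{\substack{i_1,\dots,i_n\in I\\ i_1\neq i_2,\ \dots,\ i_{n-1}\neq i_n}}\ \mathcal{H}_{i_1}^\circ\otimes\cdots\otimes\mathcal{H}_{i_n}^\circ ,
\]
with $\Omega$ a unit vector. For each $i$ there is a canonical unitary $\mathcal{H}\cong\mathcal{H}_i\otimes\mathcal{H}(i)$, where $\mathcal{H}(i)$ is the analogous free product of the pointed spaces $(\mathcal{H}_j,\xi_j)_{j\neq i}$ with vacuum $\Omega_i$; conjugating $\pi_i\otimes\id_{\mathcal{H}(i)}$ by it defines a unital $*$-homomorphism $W_i:\A_i\to B(\mathcal{H})$. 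Let $\A$ be the $\mathcal{C}^*$-algebra generated by $\bigcup_{i\in I}W_i(\A_i)$ inside $B(\mathcal{H})$ and put $\phi(\,\cdot\,)=\langle\Omega,\,\cdot\,\Omega\rangle$, a state since $\Omega$ is a unit vector.

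I would then verify the listed properties in turn. The identity $\phi\circ W_i=\phi_i$ follows at once from $\Omega\mapsto\xi_i\otimes\Omega_i$ under $\mathcal{H}\cong\mathcal{H}_i\otimes\mathcal{H}(i)$. For freeness, if $a_j\in\A_{m_j}$ with $m_1\neq m_2\neq\cdots\neq m_k$ and $\phi_{m_j}(a_j)=0$, then $W_{m_j}(a_j)\Omega\in\mathcal{H}_{m_j}^\circ\otimes(\text{rest})$, and a short induction on word length ("reading the word from the right, each letter adds a new leftmost tensor leg") shows $W_{m_1}(a_1)\cdots W_{m_k}(a_k)\Omega$ lies in the summand indexed by $(m_1,\dots,m_k)$, hence is orthogonal to $\Omega$; applying $\phi$ gives \eqref{induction}. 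Norm-preservation of each $W_i$ holds because $\pi_i$ is an injective, hence isometric, $*$-homomorphism of $\mathcal{C}^*$-algebras, while tensoring with $\id$ and conjugating by a unitary are isometric.

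The remaining two points, where faithfulness of the $\phi_i$ is genuinely used, carry the real content. That $\phi$ is a trace is checked on alternating words in the $W_i(\A_i)$: by the previous step their joint $\phi$-moments are determined by the $\phi_i$ together with the free-product combinatorics, and one verifies this expression is invariant under cyclic permutation of the word, first reducing to centred letters and then resolving any adjacent pair $a_ja_{j+1}$ with $m_j=m_{j+1}$; the only place the argument uses a property of the $\phi_i$ beyond linearity and positivity is exactly the trace identity. The hardest step is the faithfulness of $\phi$, i.e.\ that $\Omega$ is separating for $\A$. Since $\Omega$ is plainly cyclic for $\A$ (sums of alternating words applied to $\Omega$ span a dense subspace), it suffices to make $\Omega$ cyclic for the commutant $\A'$ as well. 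For this I would introduce the reversal anti-unitary $J$ on $\mathcal{H}$ — reversing the order of the tensor legs in each summand and applying $J_i$ on each factor $\mathcal{H}_i^\circ$ — which fixes $\Omega$, and show that $JW_i(\A_i)J$ is a commuting "right-regular" copy of $\A_i$ (this is where the trace property of $\phi_i$, via the intertwining property of $J_i$, is essential). Then $\A'\supseteq J\A J$ and $\overline{\A'\Omega}\supseteq J\overline{\A\Omega}=J\mathcal{H}=\mathcal{H}$, so $\Omega$ is separating and $\phi$ is faithful. Relabelling $W_i$ as required, this produces $\A=\underset{i\in I}{*}\A_i$ with all the stated properties. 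I expect this last step to be the main obstacle, being the only one truly sensitive to the hypothesis that each $\phi_i$ is faithful.
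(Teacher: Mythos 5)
The paper does not actually prove this statement: Theorem~\ref{3freesum} is taken verbatim as Theorem 7.9 of Nica and Speicher \cite{nica_speicher}, and the authors simply cite it. So there is no in-paper proof to compare against. That said, your blind reconstruction is the standard Voiculescu/Avitzour construction, which is precisely the route taken in the cited reference, and the outline is sound: the free-product Hilbert space, the left-regular copies $W_i$, the vacuum state $\phi$, and the verifications of $\phi\circ W_i=\phi_i$, of freeness via the orthogonality of the summand indexed by the alternating word, and of isometry of $W_i$ (injective $*$-homomorphism between $\mathcal{C}^*$-algebras) are all correct. You have also correctly isolated the two substantive steps and where the hypotheses enter: traciality of $\phi$, checked on alternating centred words using traciality of the $\phi_i$; and faithfulness of $\phi$, obtained by exhibiting the anti-unitary $J$ (tensor-leg reversal composed with the GNS conjugations $J_i$, which preserve $\mathcal{H}_i^\circ$ because each $J_i$ fixes $\xi_i$) and showing $J\A J\subseteq \A'$, so that cyclicity of $\Omega$ for $\A$ transfers to cyclicity for $\A'$ and hence $\Omega$ is separating. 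The only caveat is that the sketch leaves the two hard verifications — that $J W_i(a)J$ really commutes with every $W_j(b)$, and the cyclic-invariance computation for traciality — as outlines rather than proofs; for a self-contained argument these would need to be written out, but as a reconstruction of the cited theorem your approach is the right one.
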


Let us finally fix a few notations concerning the spaces and traces that we use in this paper.

\begin{defi}\phantom{X}	
	\label{3tra}
	\begin{itemize}
		\item $\M_N(\C)$ is the set of $N\times N$ matrices with coefficients in $\C$.
		\item $(\A_N,\tau_N)$ is the free product $\M_N(\C) * \mathcal{C}_d$ of $\M_N(\C)$ with $\mathcal{C}_d$ the $\mathcal{C}^*$-algebra generated by a free semicircular system $x=(x_1,\dots,x_d)$, that is the $\mathcal{C}^*$-probability space built in Theorem \ref{3freesum}. Note that when restricted to $\M_N(\C)$, $\tau_N$ is just the regular normalized trace on matrices, in this case we will denote it by $\ts_N$. The restriction of $\tau_{N}$ to $\mathcal{C}_d$ is denoted as $\tau$. Note that one can view this space as the limit of a matrix space, we refer to Proposition 3.5 from \cite{un}. 
		\item $\tr_N$ is the non-normalized trace on $\M_N(\C)$, while $\ts_N$ is the normalized one.
		\item We denote $E_{r,s}\in\M_N(\C)$ the matrix with entries equal to $0$ except in $(r,s)$ where it is equal to $1$. 
	\end{itemize}
\end{defi}

In order to interpret Theorem \ref{main_result}, we need to define the conditional expectation first.

\begin{defi}
	\label{conditionalmanqu}
	Let $\A$ be a $\mathcal{C}^*$-algebra and $\B$ be a unital $\CC^*$-algebra, then $E:\A\to\B$ is said to be a conditional expectation if it is a linear map such that:
	\begin{itemize}
		\item E is positive, i.e. for every $a\in\A$, there exist $b\in\B$ such that $E[a^*a] = b^*b$.
		\item E is a $\B$-$\B$-bimodule map, i.e. for every $a\in\A, b_1,b_2\in\B$, one has $E[b_1 a b_2] = b_1 E[a] b_2$.
		\item For every $b\in\B$, $E[b]=b$.
	\end{itemize}
\end{defi}

It is well-known that we have the following property.

\begin{prop}
	\label{condexp}
	Let $(\A,\tau)$ be a $W^*$-algebra, i.e. a von Neumann algebra with a faithful normal trace $\tau$, and $\B$ a $W^*$-subalgebra. Then there exists a unique conditional expectation $E:\A\to\B$. Besides given $a\in\A$, it is characterized by the fact that for any $b\in\B$, $\tau(E[a]b) = \tau(ab)$. Finally, for every element $a\in\A$, we have that $\norm{E[a]}\leq \norm{a}$, where $\norm{\cdot}$ is the operator norm.
\end{prop}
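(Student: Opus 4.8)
The plan is to realise $E$ as an orthogonal projection in the GNS space of $\tau$ and then read off its algebraic properties from the projection. First I would form $L^2(\A,\tau)$, the completion of $\A$ for the inner product $\langle a,b\rangle:=\tau(b^*a)$; faithfulness of $\tau$ makes $\A$ a dense subspace with $\norm{a}_2:=\tau(a^*a)^{1/2}\le\norm{a}$, and traciality makes $a\mapsto a^*$ extend to an antilinear isometric involution $J$ of $L^2(\A,\tau)$. Let $\mathcal{K}$ be the $\norm{\cdot}_2$-closure of $\B$ in $L^2(\A,\tau)$ and $e_\B$ the orthogonal projection onto $\mathcal{K}$. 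Since $\B$ is a unital $*$-subalgebra with the same unit as $\A$, we have $J\mathcal{K}=\mathcal{K}$ and, for every $b\in\B$, the left and right multiplications $\lambda(b),\rho(b)$ together with their adjoints $\lambda(b^*),\rho(b^*)$ preserve $\mathcal{K}$; hence $e_\B$ commutes with $J$ and with $\lambda(b),\rho(b)$ for all $b\in\B$. I then set $E[a]:=e_\B a$ for $a\in\A$, which a priori is only a vector of $\mathcal{K}$.

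The step I expect to be the main obstacle is to upgrade $E[a]$ from an $L^2$-vector to an element of $\B$ with $\norm{E[a]}\le\norm{a}$. For $b\in\B$, commutation gives $\rho(b)E[a]=e_\B(ab)$, so $\norm{\rho(b)E[a]}_2=\norm{e_\B(ab)}_2\le\norm{ab}_2\le\norm{a}\norm{b}_2$; thus $b\mapsto\rho(b)E[a]$ is a bounded map $\B\to L^2(\A,\tau)$ of norm at most $\norm{a}$, i.e.\ $E[a]$ is a bounded vector. By the standard fact that a bounded vector of the $L^2$-space of a finite von Neumann algebra belongs to the algebra, together with the basic-construction identity $\mathcal{K}\cap\A=\B$, one gets $E[a]\in\B$ and $\norm{E[a]}\le\norm{a}$. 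This already yields the last assertion of the proposition.

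It then remains to verify that $E\colon\A\to\B$ is a conditional expectation obeying the stated characterisation, and that it is the only one. Since $b^*\in\mathcal{K}$ for $b\in\B$, one has $\tau(E[a]b)=\langle e_\B a,b^*\rangle=\langle a,b^*\rangle=\tau(ab)$, which is the characterising identity (and, with $b=1$, gives $\tau\circ E=\tau$). All three axioms follow from this by testing against elements of $\B$ and using faithfulness of $\tau$ on $\B$: $E$ fixes $\B$ because $e_\B$ fixes $\mathcal{K}$; $E$ is a $\B$-bimodule map because for all $b\in\B$ one has $\tau(E[b_1ab_2]b)=\tau(b_1ab_2b)=\tau(a b_2 b b_1)=\tau(E[a] b_2 b b_1)=\tau(b_1 E[a] b_2 b)$ with $E[b_1ab_2]$ and $b_1E[a]b_2$ both in $\B$; and $E$ is positive because $E[a^*]=e_\B Ja=J e_\B a=E[a]^*$, so for $a=x^*x$ the self-adjoint element $E[a]\in\B$ satisfies $\tau(E[a] b^*b)=\tau(ab^*b)=\tau((xb^*)^*(xb^*))\ge0$ for every $b\in\B$, which forces $E[a]\ge0$ (test against the support projection of the negative part of $E[a]$ and use faithfulness). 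Finally, for uniqueness: the characterisation encodes $\tau$-preservation, which is indispensable — without it already $\M_N(\C)\to\C 1$ admits many conditional expectations (any state) — so if $F\colon\A\to\B$ is any conditional expectation with $\tau\circ F=\tau$, then $\B$-bimodularity gives $\tau(F[a]b)=\tau(F[ab])=\tau(ab)=\tau(E[a]b)$ for all $b\in\B$; choosing $b=(E[a]-F[a])^*\in\B$ and invoking faithfulness shows $F=E$.
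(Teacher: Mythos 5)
The paper does not supply a proof of Proposition~\ref{condexp}: it is cited as a well-known fact (the standard Umegaki/Takesaki construction of the trace-preserving conditional expectation), so there is no in-paper argument to compare against. Judged on its own terms, your proof is essentially this standard argument and it is correct. You realise $E$ as the Jones projection $e_\B$ of $L^2(\A,\tau)$ onto the $\norm{\cdot}_2$-closure $\mathcal{K}$ of $\B$, restricted to $\A$; you upgrade $E[a]$ from an $L^2$-vector to an element of $\B$ of operator norm at most $\norm{a}$ via the bounded-vector criterion; you obtain the characterising identity $\tau(E[a]b)=\tau(ab)$ directly from orthogonality; and you deduce the bimodule property, positivity, and uniqueness by pairing against $\B$ and invoking faithfulness of $\tau$ on $\B$. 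All of this is sound.

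Two small remarks. First, you rightly flag that the proposition as literally stated is a touch imprecise: uniqueness among \emph{all} conditional expectations $\A\to\B$ fails (already for $\M_N(\C)\to\C 1$ every state gives one), and the intended reading is uniqueness among $\tau$-preserving conditional expectations, which is exactly what the displayed characterisation (with $b=1$) encodes; your uniqueness argument correctly assumes $\tau\circ F=\tau$. Second, in promoting $E[a]$ from vector to operator you invoke both the bounded-vector criterion and the identity $\mathcal{K}\cap\A=\B$; the second is unnecessary and slightly circular in flavour. A cleaner route, which your computation $\rho(b)E[a]=e_\B(ab)$ already sets up, is to observe that left multiplication by $E[a]$ on $\mathcal{K}\cong L^2(\B,\tau|_\B)$ is a bounded operator commuting with $\rho(\B)$, hence lies in $\rho(\B)'=\lambda(\B)$ on $L^2(\B)$, so $E[a]\in\B$ with $\norm{E[a]}\leq\norm{a}$ by the standardness of $L^2(\B)$; no appeal to the intersection identity is needed. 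Neither point affects the correctness of the argument.
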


With those definitions and properties, one can deduce the following which gives us some understanding on Equation \eqref{scalar}.

\begin{rem}
\phantom{X}		
	\begin{itemize}
		
	\item
	While the $\CC^*$-algebra $\A_N$ built in Definition \ref{3tra} is not necessarily a von Neumann algebra, one can always replace it by its enveloping von Neumann algebra. Hence, since $\M_N(\C)$ is a von Neumann algebra, there exists a unique conditional expectation $E_{\M_N(\C)}$ from $\A_N$ to $\M_N(\C)$. That being said, it is not really necessary to understand the methods used to define the conditional expectation to read this paper. The rest of this remark will provide more details on how to estimate this quantity most of the time.
		
	\item
	Thanks to Proposition \ref{condexp} we can deduce several properties of the conditional expectation in Equation \eqref{scalar}, first and foremost one has the following bound
	\begin{align*}
		&\left|\left\langle \x, E_{\M_N(\C)}\big[f_1(P_1(x,A^N)) \cdots f_k(P_k(x,A^N))\big]\y \right\rangle\right| \\
		&\quad\leq \norm{E_{\M_N(\C)}\big[f_1(P_1(x,A^N)) \cdots f_k(P_k(x,A^N))\big]} \norm{\x}_2\norm{\y}_2 \\
		&\quad\leq \norm{f_1(P_1(x,A^N))} \cdots \norm{f_k(P_k(x,A^N))} \norm{\x}_2\norm{\y}_2. \\
	\end{align*}
	Besides if $f_i=\id_{\R}$, then $\norm{f_i(P_i(x,A^N))} = \norm{P_i(x,A^N)}$, and otherwise
	$$ \norm{f_i(P_i(x,A^N))} \leq \sup_{x\in\R} |f(x)| \leq \int_{\R}1\ d|\mu_i| \leq \norm{f_i}_4.$$
	Consequently as long as our functions do not depend on $N$, the first order term in Equation \eqref{scalar} is also independent of $N$ except with respect to the norm of the matrices $A^N$ and the one of the vectors $\x,\y$ which is to be expected.
	
	\item
	It is also worth noting that since 
	\begin{align*}
		&\left\langle \x, E_{\M_N(\C)}\big[f_1(P_1(x,A^N)) \cdots f_k(P_k(x,A^N))\big]\y \right\rangle \\
		&\quad= \tr_N\left( E_{\M_N(\C)}\big[f_1(P_1(x,A^N)) \cdots f_k(P_k(x,A^N))\big] \y\x^* \right) \\
		&\quad= N \tau_N\left( f_1(P_1(x,A^N)) \cdots f_k(P_k(x,A^N))\ \y\x^* \right),
	\end{align*}
	it is not necessary to compute the conditional expectation of $f_1(P_1(x,A^N)) \cdots f_k(P_k(x,A^N))$ to determine the first order term of Equation~\eqref{scalar}. However, the formulation with the conditional expectation renders it clear that this term is of order one.
	
	\item
	It is possible to use advanced combinatorics associated with free probability to compute the leading terms on the right of~\eqref{trace} and~\eqref{scalar}, especially when for every $i$ such that $f_i \neq \id_{\R}$, $P_i$ is a polynomial in $x$ rather than $(x,A^N)$; see Section 2 of \cite{thermalidsl}. For example with $s$ a single semicircular variable, we have that
	$$  \tau_N\left( f_1(s)A_1^N \cdots f_k(s)A_k^N \right) = \sum_{\pi \in \mathrm{NC}[k]} \langle A_1^N,\dots,A_k^N\rangle_{K(\pi)} \prod_{B\in\pi} \mathrm{sc}_{\circ}[B],  $$
	where $\mathrm{NC}[k]$ is the set of non-crossing partitions of $ \llbracket 1,k \rrbracket $, $\langle A_1^N,\dots,A_k^N\rangle_{K(\pi)}$ is a quantity that depends only on the matrices $A_1^N,\dots,A^N_k$ and the Kreweras complement $K(\pi)$ of $\pi$, whereas the free cumulant function $\mathrm{sc}_{\circ}[\;\cdot\; ]$ depends on the functions $f_1,\dots,f_k$ and the semicircular distribution. We refer to~\cite{nica_speicher} for an introduction to the combinatorics of free probability.
	\item
	If one does not have any matrices $A^N$, then since $f_1(P_1(x)) \cdots f_k(P_k(x))$ are free from $\M_N(\C)$, we have that for any $b\in\M_N(\C)$,
	$$\tau_{N}\big(f_1(P_1(x)) \cdots f_k(P_k(x)) b\big) = \tau_{N}(b) \times \tau\big(f_1(P_1(x)) \cdots f_k(P_k(x))\big).$$
	Hence, $E_{\M_N(\C)}\big[f_1(P_1(x)) \cdots f_k(P_k(x))\big] = \tau\big(f_1(P_1(x)) \cdots f_k(P_k(x))\big)$ and 
	$$ \left\langle \x, E_{\M_N(\C)}\big[f_1(P_1(x)) \cdots f_k(P_k(x))\big]\y \right\rangle =  \langle \x,\y\rangle\times \tau\big(f_1(P_1(x)) \cdots f_k(P_k(x))\big).$$
	\end{itemize}

\end{rem}

\subsection{Noncommutative polynomials and derivatives}
\label{3poly}

Let $\A_{d,2r}=\C\langle X_1,\dots,X_d,Z_1,\dots,Z_{2r}\rangle$ be the set of non-commutative polynomials in $d+2r$ variables. We set $q=2r$ to simplify notations. We denote by $\deg M$ the total degree of $M$ (that is the sum of its degree in each letter $X_1,\dots,X_d,Z_1,\dots,Z_{2r}$). Let us now define several maps that are frequently used in Operator Algebra. First, for $A,B,C\in \A_{d,q}$, let
\begin{equation}
	\label{3defperdu}
	A\otimes B \# C := ACB,\quad  A\otimes B \widetilde{\#} C := BCA,\quad  m(A\otimes B) := BA.
\end{equation}

\noindent We define an involution $*$ on $\A_{d,q}$ by $X_i^* = X_i$, $Z_i^* = Z_{i+r}$ if $1\leq i\leq r$, $Z_i^* = Z_{i-r}$ else, and then we extend it to $\A_{d,q}$ by linearity and the formula $(\alpha P Q)^* = \overline{\alpha} Q^* P^*$. $P\in \A_{d,q}$ is said to be self-adjoint if $P^* = P$.  Self-adjoint polynomials have the property that if $x_1,\dots,x_d,z_1,\dots,z_r$ are elements of a $\mathcal{C}^*$-algebra such that $x_1,\dots,x_d$ are self-adjoint, then so is $P(x_1,\dots,x_d,z_1,\dots,z_r,z_1^*,\dots,z_r^*)$. 

\begin{defi}
	\label{3application}
	
	If $1\leq i\leq d$, one defines $\partial_i: \A_{d,q} \longrightarrow \A_{d,q} \otimes \A_{d,q}$ the non-commutative derivative with respect to $X_i$ as such. First on a monomial  $M\in \A_{d,q}$, one sets
	$$ \partial_i M = \sum_{A,B\in\A_{d,q} \text{ such that } M=AX_iB} A\otimes B,$$
	and then extend it by linearity to all polynomials. We can also define $\partial_i$ by induction with the formulas,
	\begin{equation}
		\label{3leibniz}
		\begin{array}{ccc}
			&\forall P,Q\in \mathcal{A}_{d,q},\quad \partial_i (PQ) = \big(\partial_i P\big) \big( 1\otimes Q \big) + \big(P\otimes 1\big) \big(\partial_i Q\big) , \\
			& \\
			&\forall i,j,\quad \partial_i X_j = \delta_{i,j} 1\otimes 1,\quad \partial_i Z_j=0. \end{array}
	\end{equation}
	Similarly, with $m$ as in \eqref{3defperdu}, one defines the cyclic derivative  $D_i: \A_{d,q} \longrightarrow \A_{d,q}$ for $P\in \A_{d,q}$ by
	$$ D_i P = m\circ \partial_i P \ . $$
	
\end{defi}

In this paper we need to work not only with polynomials but also with more general functions, since we will work with the Fourier transform we introduce the following space.

\begin{defi}
	We set $\mathcal{S} = \left\{ R\in \mathcal{A}_{d,q}\ |\ R^*=R \right\}$, then we denote
	$$\mathcal{F}_{d,q} = \C\big\langle (E_R)_{R\in\S}, X_1,\dots,X_d,Z_1,\dots,Z_{2r}\big\rangle,$$
	the set of polynomials in $ X_1,\dots,X_d,Z_1,\dots,Z_{2r}$ as well as in a family of variable indexed by $\S$. Then given  $ y = (x_1,\dots,x_d,z_1,\dots,z_r,z_1^*,\dots,z_r^*)$ elements of a $\CC^*$-algebra, one can define by induction the evaluation of an element of $\F_{d,q}$ in $y$ by following the following rules:
	\begin{itemize}
		\item $\forall Q\in\A_{d,q}$, $Q(y)$ is defined as usual,
		\item $\forall Q_1,Q_2\in \F_{d,q}$, $(Q_1+Q_2)(y)= Q_1(y)+Q_2(y)$, $(Q_1Q_2)(y)= Q_1(y)Q_2(y)$,
		\item $\forall R\in\S$, $E_R(y) = e^{\i R(y)}$.
	\end{itemize}
	One can extend the involution $*$ from $\A_{d,q}$ to $\mathcal{F}_{d,q}$ by setting $(E_R)^* = E_{(-R)} $, and then again we have that if $Q\in\F_{d,q}$ is self-adjoint, then so is $Q(y)$. Finally, in order to make notations more transparent, we will usually write $e^{\i R}$ instead of $E_R$.
\end{defi}
Note that for technical reasons which are explained in Remark 2.10 of \cite{topoexp}, one cannot view $\mathcal{F}_{d,q}$ as a subalgebra of the set of formal power series in $X_1,\dots,X_d,Z_1,\dots,Z_{2r}$. Hence, why we need to introduce the notation $E_R$.

Now, as we will see in Proposition \ref{3SD}, a natural way to extend the definition of $\partial_i$ (and $D_i$) to $\F_{d,q}$ is by setting
\begin{equation}
	\label{3ext}
	\partial_i e^{\i R} = \i \int_0^1 \big(e^{\i \alpha R}\otimes 1\big)\ \partial_i R\ \big(1\otimes e^{\i (1-\alpha) R}\big) d\alpha .
\end{equation}

\noindent However we cannot define the integral properly on $\mathcal{F}_{d,q}\otimes \mathcal{F}_{d,q}$, but after evaluating our polynomials in a matrix space this is not a problem anymore. Indeed, a tensor of matrix spaces is also a matrix space and hence the integral of continuous functions on this space is well-defined. Thus we define the non-commutative differential on $\F_{d,q}$ as following.

\begin{defi}
	\label{3technicality}
	For $\alpha\in [0,1]$, we define $\partial_{\alpha,i}: \F_{d,q}\to \F_{d,q}\otimes \F_{d,q}$ as the map which satisfies \eqref{3leibniz} and is such that for any $R\in \mathcal{A}_{d,q}$ self-adjoint,
 	$$ \partial_{\alpha,i} e^{\i R} = \i \left(e^{\i \alpha R}\otimes 1\right)\ \partial_i R\ \left(1\otimes e^{\i (1-\alpha) R}\right),\quad D_{\alpha,i} = m \circ \partial_{\alpha,i} . $$
	Then, given $ y = (y_1,\dots, y_{d+q})$ elements of $M_N(\C)$, we define for any $Q\in \F_{d,q}$,
	$$ \partial_i Q(y) = \int_{0}^1 \partial_{\alpha,i} Q(y)\ d\alpha,\quad D_i Q(y) = \int_{0}^1 D_{\alpha,i} Q(y)\ d\alpha . $$
\end{defi}

\noindent Note that for any $P\in\A_{d,q}$, since $\int_0^11d\alpha = 1$, we do also have with $\partial_i Q$ defined as in Definition~\ref{3application}, that 
$$ \partial_i Q(y) = \int_{0}^1 \partial_{\alpha,i} Q(y)\ d\alpha .$$
Thus Definition \ref{3technicality} extends indeed the definition of $\partial_i$ from $\A_{d,q}$ to $\F_{d,q}$. Besides, it also means that we can rigorously define the composition of noncommutative differentials. Since the map $\partial_{\alpha,i}$ goes from $\F_{d,q}$ to $\F_{d,q}\otimes \F_{d,q}$ it is very easy to do so. For example one defines the following operator that we use later on.

\begin{defi}
	\label{3operatordef}
	Let $Q\in \F_{d,q}$, given $ y = (y_1,\dots, y_{d+q})$ elements of a $\CC^*$-algebra, let $i\in [1,d]$, with $\circ$ the composition of operator we define
	$$ \partial_i\circ D_i Q(y) = \int_{[0,1]^2} \partial_{\alpha_2,i}\circ D_{\alpha_1,i} Q(y)\ d\alpha_1 d\alpha_2 .$$
\end{defi}

\subsection{The Schwinger-Dyson equations and their generalization}
\label{sec:SD}

A tool that we use repeatedly with Gaussian random matrices are the so-called Schwinger-Dyson equations. It is a consequence of Gaussian integration by parts which can be summarized into the following formula. If $Z$ is a centered Gaussian random variable with variance one and $f$ a $\mathcal{C}^1$ function, then
\begin{equation}
	\label{3IPPG}
	\E[Z f(Z)] = \E[\partial_Z f(Z)] \ .
\end{equation}
From there on, we deduce the Schwinger-Dyson equations in the following proposition. For more information about these equations and their applications, we refer to \cite{alice}, Lemma 5.4.7.

\begin{prop}
	\label{3SD}
	Let $X^N$ be a GOE random matrix of size $N$, $A^N$ deterministic matrices, $Q\in \F_{1,q}$, then
	\begin{equation}
		\label{SDEPC}
		\E\Big[\ts_N\left(X^N\ Q(X^N,A^N)\right) \Big] = \E\Big[ \ts_N^{\otimes 2} \left(\partial_1 Q(X^N,A^N)\right) \Big] + \frac{1}{N} \E\Big[ \ts_N\left(h(\partial_1 Q(X^N,A^N))\right) \Big],
	\end{equation}
	where $h$ is the linear map such that $h(A\otimes B) = A^TB$ with $A^T$ is the transpose of $A$.
\end{prop}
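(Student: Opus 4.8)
The plan is to reduce the matrix-valued, noncommutative Schwinger--Dyson identity to the scalar Gaussian integration-by-parts formula \eqref{3IPPG} applied entry by entry. First I would expand the normalized trace as
$$\E\big[\ts_N(X^N\,Q(X^N,A^N))\big] = \frac1N\sum_{i,j=1}^N \E\big[X^N_{i,j}\,Q(X^N,A^N)_{j,i}\big],$$
and then, for each fixed pair $(i,j)$, apply \eqref{3IPPG} in the Gaussian variable $X^N_{i,j}$ (treating the real symmetric case: for $i\neq j$ the variable $X^N_{i,j}=X^N_{j,i}$ has variance $1/N$, and for $i=j$ the diagonal variable has variance $2/N$, which is exactly the GOE normalization in Definition \ref{3GOEdef}). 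The derivative $\partial_{X^N_{i,j}}$ hitting $Q(X^N,A^N)_{j,i}$ produces, by the chain rule for the single noncommutative variable $X_1$, a term governed by $\partial_1 Q$; the only subtlety is that $X^N$ enters $Q$ both through the $(i,j)$ and through the $(j,i)$ entry because of the symmetry constraint, which is precisely what generates the extra transpose term with the map $h$.

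Concretely, the key computational step is the identity
$$\partial_{X^N_{i,j}}\big(Q(X^N,A^N)_{j,i}\big) = \big(\partial_1 Q(X^N,A^N)\big)\#(E_{i,j}+E_{j,i})\big|_{j,i} \quad\text{(for }i\neq j\text{)},$$
where $\#$ is the action defined in \eqref{3defperdu} and one uses $\partial_1 X_1 = 1\otimes 1$ together with the extension \eqref{3ext} of $\partial_1$ to exponentials $e^{\i R}$ — this is exactly why $\partial_{\alpha,1}$ and the $\alpha$-integral in Definition \ref{3technicality} were set up, so that $\partial_{X^N_{i,j}} e^{\i R(X^N,A^N)}$ unfolds into $\i\int_0^1 (e^{\i\alpha R}\otimes 1)\,\partial_1 R\,(1\otimes e^{\i(1-\alpha)R})\,d\alpha$ evaluated against $E_{i,j}+E_{j,i}$. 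Summing the $E_{i,j}$-contribution over $i,j$ and using $\sum_{i,j}(A\otimes B)\#E_{i,j}\big|_{j,i} = \sum_{i,j}A_{j,i'}B_{j',i}(E_{i,j})_{i',j'} = \tr(A)\tr(B)$ — wait, more carefully $\sum_{i,j}(AE_{i,j}B)_{j,i} = \sum_{i,j}A_{j,i}B_{j,i} = \tr(AB^T)$, reorganizing gives the first term $\ts_N^{\otimes2}(\partial_1 Q)$ after the $1/N$ normalization; the $E_{j,i}$-contribution gives $\sum_{i,j}(AE_{j,i}B)_{j,i} = \sum_{i,j}A_{j,j}B_{i,i} = \tr(A)\tr(B)$, i.e.\ the term $\frac1N\ts_N(h(\partial_1 Q))$ with $h(A\otimes B)=A^TB$ after tracing; the bookkeeping of which contraction produces which of the two terms must be done with care.

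I would organize the write-up as: (i) reduce to entrywise sum; (ii) record the scalar IBP \eqref{3IPPG} with the correct GOE variances, noting the diagonal factor $2$ cancels against the factor-$2$ multiplicity of off-diagonal pairs so the final formula is uniform; (iii) compute $\partial_{X^N_{i,j}}$ of a monomial $Q$ (and then of $e^{\i R}$ via Definition \ref{3technicality}, extending by linearity and the Leibniz rule \eqref{3leibniz}); (iv) resum and identify the two traces. The main obstacle is the careful treatment of the symmetry constraint and the resulting two distinct contractions — getting the transpose and the map $h$ exactly right, and checking that the variance normalizations of diagonal versus off-diagonal entries conspire to give a single clean formula; once that is pinned down, everything else is a routine application of \eqref{3IPPG} term by term. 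A minor additional point is that one should first verify the formula for $Q\in\A_{1,q}$ a polynomial by induction on the degree using \eqref{3leibniz}, and then extend to $Q\in\F_{1,q}$ by writing $e^{\i R}$ through its defining derivative \eqref{3ext} and Definition \ref{3technicality}; since after evaluation in $\M_N(\C)$ all objects are genuine matrix-valued continuous functions of $\alpha$, the interchange of the Gaussian expectation with the $\alpha$-integral is justified by boundedness.
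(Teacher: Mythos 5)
Your strategy is exactly the paper's: expand $\E[\ts_N(X^N Q)]$ entrywise, apply the scalar Gaussian integration by parts~\eqref{3IPPG} in each entry $X^N_{i,j}$, use the chain rule (together with Duhamel/Definition~\ref{3technicality} for the exponential pieces) to get $\partial_{X^N_{i,j}}Q = \partial_1 Q\#(E_{i,j}+E_{j,i})$ up to the $1/\sqrt N$ normalization, and then resum the two resulting contractions. The paper's proof is precisely this four-line computation plus the remark about extending from $\A_{1,q}$ to $\F_{1,q}$ via Duhamel.

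There is, however, a concrete mix-up in the middle of your argument that has to be repaired. You correctly compute $\sum_{i,j}(AE_{i,j}B)_{j,i} = \sum_{i,j}A_{j,i}B_{j,i} = \tr_N(A^TB)$, but this is the $h$-contribution, not the $\ts_N^{\otimes 2}$-contribution: $\tr_N(A^TB)=\tr_N(h(A\otimes B))$, and after the overall $1/N^2$ normalization it produces the subleading term $\tfrac{1}{N}\ts_N(h(\partial_1 Q))$. Conversely, $\sum_{i,j}(AE_{j,i}B)_{j,i}=\sum_{i,j}A_{j,j}B_{i,i}=\tr_N(A)\tr_N(B)$ is the one that, after dividing by $N^2$, gives the leading $\ts_N^{\otimes 2}(\partial_1 Q)$. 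So you have the two contractions swapped in the identification, and as written your own corrected formula $\tr(AB^T)$ contradicts your stated conclusion ``gives $\ts_N^{\otimes 2}$.'' You anticipate this pitfall yourself (``the bookkeeping of which contraction produces which of the two terms must be done with care''); with the swap undone the proof is sound and coincides, step for step, with the paper's.
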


\begin{proof}
	Let us first assume that $Q\in\A_{1,q}$. One can write $X^N = \frac{1}{\sqrt{N}} (x_{r,s})_{1\leq r,s\leq N}$ and thus
	
	\begin{align*}
		\E\Big[ \ts_N(X^N\ Q(X^N)) \Big] &= \frac{1}{N^{3/2}} \sum_{r,s} \E\left[ x_{r,s}\ \tr_N(E_{r,s}\ Q(X^N,A^N)) \right] \\
		&= \frac{1}{N^{3/2}} \sum_{r,s} \E\left[ \tr_N(E_{r,s}\ \partial_{x_{r,s}} Q(X^N,A^N)) \right] \\
		&= \frac{1}{N^{2}} \sum_{r,s} \E\left[ \tr_N\left(E_{r,s}\ \partial_1 Q(X^N,A^N) \#( E_{s,r}+E_{r,s})\right) \right] \\
		&= \E\Big[ \ts_N^{\otimes 2} (\partial_1 Q(X^N,A^N)) \Big] + \frac{1}{N} \E\Big[ \ts_N\left(h(\partial_1 Q(X^N,A^N))\right) \Big],
	\end{align*}
	
	\noindent where notably we used that for any matrices $A,B\in\M_N(\C)$,
	$$\sum_{1\leq i,j\leq N} A_{i,j}B_{i,j} = \tr_{N}(A^TB).$$
	If $Q\in\F_{d,q}$, then the proof is pretty much the same but we need to use Duhamel's formula (for a very similar proof see \cite{deux}, Proposition 2.2) which states that for any matrices $A$ and $B$, 
	\begin{equation}
		\label{3duha}
		e^B - e^A = \int_{0}^1 e^{\alpha B} (B-A) e^{(1-\alpha)A}\ d\alpha .
	\end{equation}
	Thus this allows us to prove that for any self-adjoint polynomials $P\in\A_{d,q}$,
	$$ \partial_{x_{r,s}} e^{\i P(X^N)} = \i \int_{0}^1 e^{\i \alpha P(X^N)}\ \partial_1 P(X^N) \# E_{s,r}\ e^{\i (1-\alpha) P(X^N)}\ d\alpha,$$
	and the conclusion follows.
	\end{proof}

In the case of GUE random matrices we have an even shorter formula. The following Proposition is actually Proposition 2.23 of \cite{topoexp} whose proof is quite similar to the one of Proposition \ref{3SD}.
\begin{prop}
	\label{skscksd}
	Let $X^N$ be a GUE random matrix of size $N$, $A^N$ deterministic matrices, $Q\in \F_{1,q}$, then
	\begin{equation}
		\E\Big[\ts_N\left(X^N\ Q(X^N,A^N)\right) \Big] = \E\Big[ \ts_N^{\otimes 2} \left(\partial_1 Q(X^N,A^N)\right) \Big].
	\end{equation}
\end{prop}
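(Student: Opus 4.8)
The statement to prove is Proposition~\ref{skscksd}, the Schwinger--Dyson equation for a single GUE matrix: for $X^N$ a GUE matrix, $A^N$ deterministic matrices, and $Q\in\F_{1,q}$,
$$\E\big[\ts_N(X^N\,Q(X^N,A^N))\big]=\E\big[\ts_N^{\otimes 2}(\partial_1 Q(X^N,A^N))\big].$$

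The plan is to mimic the proof of Proposition~\ref{3SD} exactly, the only difference being the covariance structure of the Gaussian entries of the GUE versus the GOE. First I would treat the polynomial case $Q\in\A_{1,q}$. Write $X^N=\frac{1}{\sqrt N}(x_{r,s})_{1\le r,s\le N}$, where the family $(x_{r,s})$ is the complex Gaussian family from Definition~\ref{3GUEdef}; it satisfies $\E[x_{r,s}x_{u,v}]=\delta_{r,v}\delta_{s,u}$ (and $\E[x_{r,s}\overline{x_{u,v}}]=\delta_{r,u}\delta_{s,v}$), i.e.\ the only nonzero pairing is $x_{r,s}$ with $x_{s,r}$, with no extra "transpose" term. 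Expanding the normalized trace and applying the complex Gaussian integration by parts formula~\eqref{3IPPG} (in the form $\E[x_{r,s}f]=\E[\partial_{x_{s,r}}f]$, which is the complex analogue of~\eqref{3IPPG}), one gets
$$\E\big[\ts_N(X^N Q(X^N,A^N))\big]=\frac{1}{N^{3/2}}\sum_{r,s}\E\big[x_{r,s}\,\tr_N(E_{r,s}Q(X^N,A^N))\big]=\frac{1}{N^{2}}\sum_{r,s}\E\big[\tr_N\big(E_{r,s}\,\partial_1 Q(X^N,A^N)\#E_{s,r}\big)\big].$$
Here the derivative of the $(r,s)$-entry of $X^N$ produces only the single matrix unit $E_{s,r}$ (rather than $E_{s,r}+E_{r,s}$ as for the GOE), which is precisely why the correction term $\frac1N\ts_N(h(\partial_1 Q))$ is absent. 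Using that $\sum_{r,s}\tr_N(E_{r,s}\,(A\otimes B)\#E_{s,r})=\sum_{r,s}\tr_N(E_{r,s}AE_{s,r}B)=\tr_N(A)\tr_N(B)$, the right-hand side collapses to $\E[\ts_N^{\otimes 2}(\partial_1 Q(X^N,A^N))]$, proving the polynomial case.

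For general $Q\in\F_{1,q}$, the argument carries over verbatim provided one can differentiate the exponential terms $e^{\i P(X^N)}$ with respect to the matrix entries. As in the proof of Proposition~\ref{3SD}, I would invoke Duhamel's formula~\eqref{3duha} to get, for $P\in\A_{1,q}$ self-adjoint,
$$\partial_{x_{r,s}}e^{\i P(X^N)}=\i\int_0^1 e^{\i\alpha P(X^N)}\,\partial_1 P(X^N)\#E_{s,r}\,e^{\i(1-\alpha)P(X^N)}\,d\alpha,$$
which by the definition of $\partial_1$ on $\F_{1,q}$ (Definition~\ref{3technicality}) is exactly $\partial_1 e^{\i P(X^N)}\#E_{s,r}$ evaluated at $X^N$. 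Plugging this into the same chain of equalities, together with the Leibniz rule for $\partial_1$ to handle products, yields the claim in full generality. There is essentially no obstacle here: the proof is a routine adaptation, and the only point requiring a little care is bookkeeping the complex Gaussian covariance of the GUE to confirm that the single-pairing structure removes the $1/N$ term present in the GOE case. Since this is Proposition~2.23 of~\cite{topoexp}, one may alternatively just cite it.
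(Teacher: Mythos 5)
Your proof is correct and takes essentially the same approach the paper indicates: the paper does not write out the GUE case but cites Proposition 2.23 of \cite{topoexp} and notes it is proved "quite similar to" the GOE computation in Proposition~\ref{3SD}, which is exactly the mimicry you carry out, including the observation that the complex Gaussian covariance $\E[x_{r,s}x_{u,v}]=\delta_{r,v}\delta_{s,u}$ produces only the single matrix unit $E_{s,r}$ and hence no $\frac1N h(\partial_1 Q)$ correction, and the use of Duhamel's formula to extend from $\A_{1,q}$ to $\F_{1,q}$.
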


For studying Wigner matrices, there exists a more general formula, the cumulant expansion. Its usefulness in random matrix theory was recognized in~\cite{KKP} and has widely been used since, {\it e.g.}~\cite{BoutetdeMonvel,ErdoesKruegerSchroeder,moment,He+Knowles,HLY,LeeSchnellisparse,lytova_pastur}. We will use a specific version with an explicit expression for the remainder. To do so we follow the proof of Proposition 3.1 of \cite{lytova_pastur} but we do not upper bound the remainder.

\begin{prop}\label{cuulantmexpansion}
	Let $u,v$ be real random variables such that $\E[|u|^{\ell+2}]< \infty$ and $\E[|v|^{\ell+2}]< \infty$ for some natural number $\ell$. Let  $(\kappa_{n,m})_{n,m\in\N}$ be the joint cumulants of $u$ and $v$, i.e. the numbers that satisfy
	$$ \log\E\left[ e^{\i t u + \i s v} \right] = \sum_{0\leq n+m \leq \ell} \frac{\kappa_{n,m}}{n! m!} (\i t)^n (\i s)^m + o (\max\{|t|,|s|\}^\ell).$$
	Then for any function $\Phi: \R^2 \to\C$ of the class $\mathcal{C}^{\ell+1}$, we have that 
	\begin{equation}
		\label{kdkc}
		\E\left[ u \Phi(u,v)\right] = \sum_{\substack{0\leq a+b\leq \ell}} \frac{\kappa_{a+1,b} }{a! b!}\times \E[\partial_u^a\partial_v^b \Phi(u,v)] + \varepsilon_{\ell+1} ,
	\end{equation}
	with
	$$ \varepsilon_{\ell+1} = \E\left[ u g_{0,0}(u,v)\right] - \sum_{\substack{0\leq a+b \leq \ell}} \frac{\kappa_{a+1,b} }{a! b!}\times \E[g_{a,b}(u,v)],$$
	where we defined 
	\begin{align*}
		g_{a,b}:(X_1,X_2)\in\R^2 \longmapsto \frac{1}{(\ell-a-b)!} \sum_{\varepsilon\in\{1,2\}^{\ell+1-a-b}}& X_{\varepsilon_1}\cdots X_{\varepsilon_{\ell+1-a-b}} \\
		&\int_0^1 \partial_{\varepsilon_1}\cdots \partial_{\varepsilon_{\ell+1-a-b}}\partial_{1}^a\partial_{2}^b\Phi(tX_1,tX_2)\ (1-t)^{\ell-a-b}\ dt.
	\end{align*}
\end{prop}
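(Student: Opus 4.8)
The plan is to recognize the functions $g_{a,b}$ as the exact integral remainders in Taylor's formula, which reduces the whole statement to the classical moment--cumulant recursion; once that is in place, no analysis is left, only combinatorial bookkeeping.

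First I would record the multivariate Taylor expansion with integral remainder: for $\Psi\in\mathcal{C}^{K+1}(\R^2)$ and $0\le K\le\ell$,
\[
\Psi(X_1,X_2)=\sum_{0\le p+q\le K}\frac{X_1^pX_2^q}{p!q!}\ \partial_1^p\partial_2^q\Psi(0,0)+r_K(X_1,X_2),
\]
where, after rewriting the sum over multi-indices $\alpha$ of length $K+1$ as a sum over ordered tuples $\varepsilon\in\{1,2\}^{K+1}$ (using $\binom{K+1}{\alpha}=(K+1)!/\alpha!$ to pass from $\frac{1}{K!}\sum_{\varepsilon}$ to $\sum_{|\alpha|=K+1}\frac{K+1}{\alpha!}$),
\[
r_K(X_1,X_2)=\frac{1}{K!}\sum_{\varepsilon\in\{1,2\}^{K+1}}X_{\varepsilon_1}\cdots X_{\varepsilon_{K+1}}\int_0^1\partial_{\varepsilon_1}\cdots\partial_{\varepsilon_{K+1}}\Psi(tX_1,tX_2)\ (1-t)^K\ dt.
\]
Applying this with $\Psi=\partial_1^a\partial_2^b\Phi$ and $K=\ell-a-b$ identifies $r_{\ell-a-b}$ with $g_{a,b}$; in particular $\Phi-g_{0,0}$ and $\partial_1^a\partial_2^b\Phi-g_{a,b}$ are, respectively, the degree-$\ell$ and degree-$(\ell-a-b)$ Taylor polynomials at the origin of $\Phi$ and of $\partial_1^a\partial_2^b\Phi$.

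Next I would substitute $(X_1,X_2)=(u,v)$. Multiplying the identity for $\Phi$ by $u$ and taking expectations gives
\[
\E[u\Phi(u,v)]=\sum_{0\le p+q\le\ell}\frac{\E[u^{p+1}v^q]}{p!q!}\ \partial_1^p\partial_2^q\Phi(0,0)+\E[u\ g_{0,0}(u,v)],
\]
while taking expectations in the identity for $\partial_1^a\partial_2^b\Phi$ gives
\[
\E[(\partial_1^a\partial_2^b\Phi)(u,v)]=\sum_{0\le p+q\le\ell-a-b}\frac{\E[u^pv^q]}{p!q!}\ \partial_1^{p+a}\partial_2^{q+b}\Phi(0,0)+\E[g_{a,b}(u,v)].
\]
Inserting the second expansion into $\sum_{0\le a+b\le\ell}\frac{\kappa_{a+1,b}}{a!b!}\E[(\partial_1^a\partial_2^b\Phi)(u,v)]$ and comparing the coefficient of each $\partial_1^P\partial_2^Q\Phi(0,0)$ with the expansion of $\E[u\Phi(u,v)]$, the asserted identity (with $\varepsilon_{\ell+1}$ being precisely what the $g$-terms leave over) becomes equivalent to the moment--cumulant recursion
\[
\E[u^{P+1}v^Q]=\sum_{a=0}^{P}\sum_{b=0}^{Q}\binom{P}{a}\binom{Q}{b}\ \kappa_{a+1,b}\ \E[u^{P-a}v^{Q-b}],\qquad 0\le P+Q\le\ell.
\]
To prove this I would differentiate the defining relation $\E[e^{\i tu+\i sv}]=\exp\big(\sum_{n,m}\frac{\kappa_{n,m}}{n!m!}(\i t)^n(\i s)^m+\dots\big)$ with respect to $t$: the left side becomes $\i\ \E[u\ e^{\i tu+\i sv}]$ and the right side $(\partial_t\log\E[e^{\i tu+\i sv}])\ \E[e^{\i tu+\i sv}]$; under the moment hypotheses both factors are $\mathcal{C}^{\ell+1}$ near the origin with the stated Taylor coefficients, so matching the coefficient of $(\i t)^P(\i s)^Q/(P!Q!)$ — with the index shift $n\mapsto n-1$ produced by $\partial_t$ — yields exactly the recursion. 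This part is classical (Leonov--Shiryaev), so I would only sketch it.

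The main difficulty is bookkeeping rather than analysis: getting the combinatorial factors right so that $g_{a,b}$ is indeed the Taylor remainder, and then tracking the index ranges — the cumulant sum runs over $a+b\le\ell$, i.e.\ cumulants of total order up to $\ell+1$, whereas the recursion is used only for $P+Q\le\ell$, so one must watch the off-by-one between the order of the Taylor expansion of $\Phi$ and the order of the cumulants involved. One should also check that every expectation appearing is finite: the moments $\E[u^{P+1}v^Q]$ with $P+Q\le\ell$ are controlled by $\E[|u|^{\ell+1}]$ and $\E[|v|^{\ell+1}]$ via Hölder, so that, modulo the recursion, the identification of $\varepsilon_{\ell+1}$ with the displayed expression is just a formal rearrangement of the finite sums produced by Taylor's formula. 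No concentration estimate or delicate probabilistic input is needed.
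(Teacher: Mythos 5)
Your proof is correct and follows the same overall strategy as the paper's: establish the moment--cumulant recursion by differentiating the relation $\E[u\,e^{\i tu+\i sv}]=\E[e^{\i tu+\i sv}]\cdot(-\i\partial_t)\log\E[e^{\i tu+\i sv}]$, then decompose $\Phi$ via Taylor's theorem with integral remainder, and finally rearrange. The one genuine improvement in your write-up is how you handle the identity connecting $g_{a,b}$ to $\Phi$: the paper defines $\pi_\ell$ as the degree-$\ell$ Taylor polynomial of $\Phi$ and then verifies $\partial_1^a\partial_2^b(\Phi-\pi_\ell)=g_{a,b}$ by a somewhat lengthy direct computation (first $\partial_1$ of the integral-remainder expression, then an induction), which also requires $\Phi\in\mathcal{C}^{\ell+2}$ and a subsequent density argument to relax to $\mathcal{C}^{\ell+1}$. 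You instead apply Taylor's theorem at order $\ell-a-b$ directly to $\Psi=\partial_1^a\partial_2^b\Phi$ and observe that its degree-$(\ell-a-b)$ Taylor polynomial is precisely $\partial_1^a\partial_2^b\pi_\ell$ (an elementary fact about coefficients), so $g_{a,b}$ is identified as the remainder without any differentiation under the integral sign and with $\mathcal{C}^{\ell+1}$ regularity from the start. Your subsequent coefficient-matching — comparing the coefficient of each $\partial_1^P\partial_2^Q\Phi(0,0)$ with $P+Q\le\ell$ and invoking the recursion $\mu_{P+1,Q}=\sum_{a\le P,\,b\le Q}\binom{P}{a}\binom{Q}{b}\kappa_{a+1,b}\,\mu_{P-a,Q-b}$ — is a faithful reformulation of the paper's step of applying the exact cumulant formula to the polynomial $\pi_\ell$. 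So: same skeleton, cleaner flesh on the Taylor-remainder identity.
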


\begin{proof}
	Since we have that
	$$ \E\left[ u e^{\i t u + \i s v} \right] = \E\left[ e^{\i t u + \i s v} \right] \times (-\i \partial_t)\left(\log\E\left[ e^{\i t u + \i s v} \right]\right),$$
	then with $\mu_{n,m} = \E[u^n v^m]$, this implies that for $n+m\leq \ell$,
	$$\mu_{n+1,m} = \sum_{\substack{0\leq a\leq n \\ 0\leq b\leq m}} \frac{\kappa_{a+1,b}}{a! b!}\times \frac{n! m!}{ (n-a)! (m-b)!}\mu_{n-a,m-b}. $$
	Consequently, for any polynomial $P$ of degree smaller than $\ell$, we have that
	\begin{align*}
		\E[u P(u,v)] &= \sum_{\substack{0\leq a< \infty \\ 0\leq b< \infty}} \frac{\kappa_{a+1,b} }{a! b!}\times \E[\partial_u^a\partial_v^b P(u,v)] \\
		&= \sum_{\substack{0\leq a+b \leq \ell}} \frac{\kappa_{a+1,b} }{a! b!}\times \E[\partial_u^a\partial_v^b P(u,v)].
	\end{align*}
	For $X_1,X_2\in\R$, let $f:t\in [0,1]\mapsto \Phi(tX_1,tX_2)$, then thanks to Taylor's theorem, we have that
	$$ f(1) = \sum_{j=0}^{\ell} \frac{f^{(j)}(0)}{j!} + \frac{1}{\ell!}\int_{0}^{1} f^{(\ell+1)}(t) (1-t)^{\ell} dt.$$
	Consequently, there exists a polynomial $\pi_\ell$ of degree at most $\ell$ such that
	\begin{align}
		\label{differenlate}
		\Phi(X_1,X_2) &= \pi_\ell(X_1,X_2) + \frac{1}{\ell!} \sum_{\varepsilon\in\{1,2\}^{\ell+1}} X_{\varepsilon_1}\cdots X_{\varepsilon_{\ell+1}}\int_0^1 \partial_{\varepsilon_1}\cdots \partial_{\varepsilon_{\ell+1}}\Phi(tX_1,tX_2)\ (1-t)^\ell\ dt \\
		&= \pi_\ell(X_1,X_2) + g_{0,0}(X_1,X_2). \nonumber
	\end{align}
	
	\noindent Thus we get that
	\begin{align*}
		\E\left[ u \Phi(u,v)\right] &= \E\left[ u \pi_\ell(u,v)\right] + \E\left[ u g_{0,0}(u,v)\right] \\
		&=  \sum_{\substack{0\leq a+b \leq \ell}} \frac{\kappa_{a+1,b} }{a! b!}\times \E[\partial_u^a\partial_v^b \pi_\ell(u,v)] + \E\left[ u g_{0,0}(u,v)\right].
	\end{align*}
	But thanks to Equation \eqref{differenlate}, we have that 
	\begin{align*}
		&\partial_1\left( \Phi(X_1,X_2) - \pi_\ell(X_1,X_2) \right) \\ 
		&= \frac{1}{\ell!} \sum_{\substack{\varepsilon\in\{1,2\}^{\ell+1} \\ j\in[1,\ell+1]}} X_{\varepsilon_1}\cdots X_{\varepsilon_{j-1}} \partial_1(X_{\varepsilon_{j}}) X_{\varepsilon_{j+1}}\cdots X_{\varepsilon_{\ell+1}}\int_0^1 \partial_{\varepsilon_1}\cdots \partial_{\varepsilon_{\ell+1}}\Phi(tX_1,tX_2)\ (1-t)^\ell\ dt \\ 
		&\quad + \frac{1}{\ell!} \sum_{\varepsilon\in\{1,2\}^{\ell+1}} X_{\varepsilon_1}\cdots X_{\varepsilon_{\ell+1}}\int_0^1 \partial_{\varepsilon_1}\cdots \partial_{\varepsilon_{\ell+1}}\partial_1\Phi(tX_1,tX_2)\times t(1-t)^\ell\ dt \\
		&= \frac{1}{\ell!} \sum_{\varepsilon\in\{1,2\}^\ell} X_{\varepsilon_1}\cdots X_{\varepsilon_{\ell}}\int_0^1 \partial_{\varepsilon_1}\cdots \partial_{\varepsilon_{\ell}} \partial_1\Phi(tX_1,tX_2)\ (1-t)^\ell\ dt\times (\ell+1) \\
		&\quad + \frac{1}{\ell!} \sum_{\varepsilon\in\{1,2\}^{\ell}} X_{\varepsilon_1}\cdots X_{\varepsilon_{\ell}}\int_0^1 \frac{d}{dt}\left(\partial_{\varepsilon_1}\cdots \partial_{\varepsilon_{\ell}}\partial_1\Phi(tX_1,tX_2)\right)\times t(1-t)^\ell\ dt \\
		&= \frac{1}{\ell!} \sum_{\varepsilon\in\{1,2\}^\ell} X_{\varepsilon_1}\cdots X_{\varepsilon_{\ell}}\int_0^1 \partial_{\varepsilon_1}\cdots \partial_{\varepsilon_{\ell}} \partial_1\Phi(tX_1,tX_2)\times (1-t)^{\ell-1} \times((1-t)(\ell+1) - (1-t) + \ell t )\ dt \\
		&= \frac{1}{(\ell-1)!} \sum_{\varepsilon\in\{1,2\}^\ell} X_{\varepsilon_1}\cdots X_{\varepsilon_{\ell}}\int_0^1 \partial_{\varepsilon_1}\cdots \partial_{\varepsilon_{\ell}} \partial_1\Phi(tX_1,tX_2)\times (1-t)^{\ell-1}\ dt.
	\end{align*}
	
	\noindent Hence, by induction we get that $ \partial_1^a\partial_2^b\left( \Phi- \pi_\ell \right) = g_{a,b}$. Note that we used that $\Phi$ was of class $\mathcal{C}^{\ell+2}$ in this computation but by an argument of density we only need $\Phi$ to be of class $\mathcal{C}^{\ell+1}$ eventually. Thus in conclusion,
	\begin{align*}
		\E\left[ u \Phi(u,v)\right] &= \sum_{\substack{0\leq a+b \leq \ell}} \frac{\kappa_{a+1,b} }{a! b!}\times \E[\partial_u^a\partial_v^b \Phi(u,v)] + \E\left[ u g_{0,0}(u,v)\right] - \sum_{\substack{0\leq a+b \leq \ell}} \frac{\kappa_{a+1,b} }{a! b!}\times \E[g_{a,b}(u,v)]. \\
	\end{align*}
	This proves Equation \eqref{kdkc}.
\end{proof}

\section{Reduction of the problem to the case of GUE matrices}
\label{sec:concentration}

\subsection{Concentration of the trace}

In order to prove Theorem \ref{main_result}, the first step is to reduce the proof to the case of GUE random matrices. To do so we prove two concentration results. First for the trace in Proposition \ref{concentrtrace}, then for the scalar product in Proposition~\ref{concentrcoeff}. Before giving those propositions, we state the following lemmas which will be useful for the proof.

\begin{lemma}
\label{podvfldmvd}
    Given non-negative random variables $\mathcal{X},\mathcal{Y}$, $\alpha\geq 0$, $n\in\N^*$ then with $g\in [1,n]$,
    $$ \alpha^g\E\left[ \mathcal{X}^{n-g} \mathcal{Y} \right] \leq \E\left[\mathcal{X}^n \right] + \alpha^n \E\left[ \mathcal{Y}^{\frac n g} \right].$$
\end{lemma}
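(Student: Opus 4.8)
The statement to prove is Lemma \ref{podvfldmvd}: for non-negative random variables $\mathcal{X},\mathcal{Y}$, a scalar $\alpha\geq 0$, and integers $1\le g\le n$, one has
$$ \alpha^g\,\E\!\left[ \mathcal{X}^{n-g}\,\mathcal{Y} \right] \;\leq\; \E\!\left[\mathcal{X}^n \right] + \alpha^n\,\E\!\left[ \mathcal{Y}^{n/g} \right].$$

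\textbf{Approach.} The plan is to recognize this as a weighted arithmetic--geometric mean inequality combined with H\"older's inequality. First I would handle the pointwise factor: for fixed $\omega$, write $\alpha^g \mathcal{X}^{n-g}\mathcal{Y} = \big(\mathcal{X}^{n}\big)^{1-g/n}\big(\alpha^{n}\mathcal{Y}^{n/g}\big)^{g/n}$, which is just $u^{1-\theta} v^{\theta}$ with $u=\mathcal{X}^n$, $v=\alpha^n\mathcal{Y}^{n/g}$ and $\theta = g/n\in(0,1]$. Thus Young's inequality $u^{1-\theta}v^{\theta}\le (1-\theta)u+\theta v$ already gives the pointwise bound $\alpha^g\mathcal{X}^{n-g}\mathcal{Y}\le \mathcal{X}^n+\alpha^n\mathcal{Y}^{n/g}$ (dropping the convex weights, which only helps since both terms are non-negative), and taking expectations finishes it. Actually this pointwise route needs a tiny bit of care when $\mathcal{X}=0$ or $\alpha=0$, but those cases are trivial (the left side vanishes or the inequality is immediate), so one may assume $\mathcal{X},\alpha>0$ on the relevant event.

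\textbf{Alternative, and the cleaner writeup.} Rather than the pointwise identity I would actually present it via H\"older at the level of expectations, since the exponent $n/g$ on $\mathcal{Y}$ is exactly what H\"older produces: with conjugate exponents $p = n/(n-g)$ and $p' = n/g$ (valid since $1\le g\le n$; when $g=n$ the first factor is absent and the inequality is trivial), H\"older gives
$$ \E\!\left[\mathcal{X}^{n-g}\mathcal{Y}\right] \;\le\; \E\!\left[\mathcal{X}^{n}\right]^{(n-g)/n}\,\E\!\left[\mathcal{Y}^{n/g}\right]^{g/n}. $$
Multiplying by $\alpha^g$ and writing $\alpha^g = \big(\alpha^n\big)^{g/n}$, the right side becomes $\E[\mathcal{X}^n]^{(n-g)/n}\big(\alpha^n\E[\mathcal{Y}^{n/g}]\big)^{g/n}$, i.e.\ again $A^{1-\theta}B^{\theta}$ with $A = \E[\mathcal{X}^n]$, $B = \alpha^n\E[\mathcal{Y}^{n/g}]$, $\theta = g/n$. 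A final application of the weighted AM--GM inequality $A^{1-\theta}B^{\theta}\le (1-\theta)A+\theta B\le A+B$ yields exactly $\alpha^g\E[\mathcal{X}^{n-g}\mathcal{Y}]\le \E[\mathcal{X}^n]+\alpha^n\E[\mathcal{Y}^{n/g}]$, as claimed.

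\textbf{Expected obstacles.} Honestly there is no real obstacle here; this is a routine estimate. The only points requiring a word of attention are the boundary case $g=n$ (where $\mathcal{X}^{n-g}=\mathcal{X}^0\equiv 1$ and the statement reduces to $\alpha^n\E[\mathcal{Y}]\le \E[1]+\alpha^n\E[\mathcal{Y}]$, trivially true — though one should note the intended reading there, or just invoke H\"older with a single factor), and the degenerate cases $\alpha=0$ or $\E[\mathcal{X}^n]=\infty$, all of which are immediate. So the writeup is two lines: apply H\"older, then AM--GM. I would state it in the H\"older-then-AM--GM form since it is the most transparent and avoids any measurability fuss about the pointwise identity.
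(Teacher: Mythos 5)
Your proof is correct, and your first (pointwise) version is exactly the paper's argument: apply Young's inequality $ab \le a^p/p + b^q/q$ with $a=\mathcal{X}^{n-g}$, $b=\alpha^g\mathcal{Y}$, $p=n/(n-g)$, $q=n/g$, then take expectations and drop the convex weights. Your preferred H\"older-then-AM--GM writeup is a trivial reshuffling of the same convexity inequality and yields the identical bound, so the two routes are essentially the same. (One small remark: your worry about $\mathcal{X}=0$ or $\alpha=0$ in the pointwise route is unnecessary, since $u^{1-\theta}v^{\theta}\le(1-\theta)u+\theta v$ holds for all $u,v\ge0$, $\theta\in[0,1]$ with the convention $0^0=1$; no separate case is needed.)
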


\begin{proof} For $a,b\geq 0$ and conjugate exponents $p,q$, we have Young's inequality
    \begin{equation}
    \label{sidcvsokcm}
        ab \leq \frac{a^q}{q} + \frac{b^p}{p}.
    \end{equation}
    Consequently, with $p= \frac{n}{n-g}$, $ q= \frac{n}{g}$, we have that
    \begin{align*}
        \alpha^g\E\left[ \mathcal{X}^{n-g} \mathcal{Y} \right] &\leq \frac{n-g}{n} \E\left[ \mathcal{X}^{n} \right] + \frac{g}{n} \alpha^n \E\left[ \mathcal{Y}^{\frac n g} \right]\leq \E\left[ \mathcal{X}^{n} \right] + \alpha^n \E\left[ \mathcal{Y}^{\frac n g} \right],
    \end{align*}
    and the claim follows
    
\end{proof}

\begin{lemma}
\label{docsocdssdc}
Given
\begin{itemize}
		\item $Y^N$ a $d$-tuple of independent Wigner matrices as in Definition \ref{defwigner}, 
		\item $A^N$ deterministic matrices such that $\sup_{1\leq i\leq q,N\in\N^*} \norm{A_i^N} <\infty$,
        \item $Q_1,\dots,Q_l\in\F_{d,q}$,
        \item $c\in\R^+$,
\end{itemize}
then 
\begin{equation}
    \sup_{N\in\N^*} \E\left[ \left| \ts_N\left(Q_1(Y^N,A^N)\right) \dots\ts_N\left(Q_l(Y^N,A^N)\right) \right|^c \right] <\infty
\end{equation}
Besides if one writes each $Q_j$ as a linear combination of terms of the form $e^{\i P_1}R_1\cdots e^{\i P_k}R_k$, then the upper bound on the equation above does not depend on the (self-adjoint) polynomials $P_1,\dots,P_k$.
\end{lemma}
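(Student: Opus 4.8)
The plan is to reduce everything to moment bounds on traces of monomials in the entries of $Y^N$ and then control these uniformly in $N$. First I would observe that by multilinearity it suffices to treat a single product $\ts_N(Q_1)\cdots\ts_N(Q_l)$ where each $Q_j$ is of the form $e^{\i P_1^{(j)}}R_1^{(j)}\cdots e^{\i P_{k_j}^{(j)}}R_{k_j}^{(j)}$ with the $P_m^{(j)}$ self-adjoint polynomials and the $R_m^{(j)}$ arbitrary polynomials in $Y^N$ and $A^N$. The crucial point is that each $e^{\i P_m^{(j)}(Y^N,A^N)}$ is a \emph{unitary} operator, hence has operator norm exactly $1$; therefore, writing $\norm{\cdot}$ for the operator norm on $\M_N(\C)$,
$$ \left| \ts_N\left(Q_j(Y^N,A^N)\right) \right| \leq \norm{Q_j(Y^N,A^N)} \leq \prod_{m=1}^{k_j} \norm{R_m^{(j)}(Y^N,A^N)}, $$
and this bound is manifestly independent of the self-adjoint polynomials $P_m^{(j)}$. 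Thus it is enough to prove that for any polynomial $R\in\A_{d,q}$ and any $c\geq 0$,
$$ \sup_{N\in\N^*} \E\left[ \norm{R(Y^N,A^N)}^c \right] <\infty, $$
since then Hölder's inequality applied to the product over $j$ (or repeated use of Lemma \ref{podvfldmvd}) finishes the argument, using $\sup_{i,N}\norm{A_i^N}<\infty$ to absorb the deterministic factors.

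It then remains to establish the uniform moment bound on $\norm{R(Y^N,A^N)}$, which reduces to the classical fact that $\sup_N \E[\norm{Y_i^N}^c]<\infty$ for every $c\geq 0$; indeed, expanding $R$ as a sum of monomials and using submultiplicativity of the operator norm together with $\norm{A_i^N}\leq C$ reduces the estimate to bounds on $\prod \norm{Y_{i_s}^N}$, which Hölder again turns into single-factor estimates. The bound $\sup_N\E[\norm{Y_i^N}^c]<\infty$ follows from the moment assumption \eqref{assump}: one has $\norm{Y_i^N}\leq \norm{Y_i^N}_{\mathrm{HS}}$ trivially, or more efficiently, for any even integer $p$, $\norm{Y_i^N}^p\leq \tr_N\big((Y_i^N)^p\big)$ when $p$ is even (taking $Y_i^N$ Hermitian), and $\E[\tr_N((Y_i^N)^p)]$ is bounded uniformly in $N$ by a standard moment/genus expansion using only that the entries are independent, centered, with variance $N^{-1}$ off the diagonal and uniformly bounded rescaled moments of all orders; then for arbitrary $c$ one picks an even $p\geq c$ and applies Jensen. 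Alternatively one may simply cite Theorem 5.4.2 of \cite{alice} (or its analogue) for the almost sure boundedness of $\norm{Y_i^N}$ together with the finite-moment inputs.

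The only genuine subtlety — and the step I would expect to require the most care — is making sure the operator-norm bound truly eliminates all dependence on the $P_m^{(j)}$: this works precisely because exponentials of self-adjoint polynomials are unitary, so no information about $\deg P_m^{(j)}$ or its coefficients enters; once the exponential factors are discarded the problem is the routine one of moments of norms of polynomials in Wigner and deterministic matrices. I would present the argument in the order above: (i) linearize and reduce to one product; (ii) bound each trace by the product of operator norms of the $R_m^{(j)}$, noting $P$-independence; (iii) apply Hölder over $j$ and $m$; (iv) invoke $\sup_N\E[\norm{Y_i^N}^c]<\infty$.
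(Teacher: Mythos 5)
Your plan correctly identifies the unitarity of the exponential factors as the mechanism that removes all dependence on the $P_m^{(j)}$, and the reduction to a single product of the form $e^{\i P_1}R_1\cdots e^{\i P_k}R_k$ matches the paper. However, the route through operator norms introduces a genuine gap. You reduce the claim to $\sup_N \E\big[\norm{R(Y^N,A^N)}^c\big]<\infty$, and your argument for this is incorrect: for fixed even $p$ the quantity $\E\big[\tr_N\big((Y_i^N)^p\big)\big]$ is \emph{not} bounded uniformly in $N$ --- since $\tr_N=N\,\ts_N$ and $\E\big[\ts_N((Y_i^N)^p)\big]$ converges to the $p$-th moment of the semicircle law, it grows linearly in $N$ --- and the Hilbert--Schmidt bound is worse still, $\norm{Y_i^N}_{\mathrm{HS}}\sim\sqrt N$. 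The statement $\sup_N\E[\norm{Y_i^N}^c]<\infty$ is in fact true under \eqref{assump}, but proving it requires a F\"uredi--Koml\'os-type argument in which $p$ grows like $\log N$ and the moment bounds $\E\big[\ts_N((Y_i^N)^{2p})\big]\le B^{2p}$ must be controlled \emph{uniformly in both $N$ and $p$}; this is a nontrivial step that your sketch does not supply, and citing Theorem 5.4.2 of \cite{alice} only gives almost-sure norm convergence, not a uniform moment bound.

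The paper sidesteps this difficulty by never leaving the world of normalized traces. After reducing to even integer $c$ (via $\E[|X|^c]\le\E[|X|^{cp}]^{1/p}$) and to $l=1$ by H\"older, it uses $|\ts_N(Q)|^c\le\ts_N\big((Q^*Q)^{c/2}\big)$ and then applies H\"older to the trace to isolate $\E\big[\ts_N\big(|e^{\i P_j}|^{2k}\big)\big]^{1/(2k)}=1$ from the polynomial factors $\E\big[\ts_N\big((R_j^*R_j)^k\big)\big]^{1/(2k)}$, which are uniformly bounded by Theorem 5.4.5 of \cite{alice}. In other words, the quantities that are actually uniformly bounded in $N$ here are the \emph{normalized} trace moments of polynomials, not the operator norm moments; the weaker statement $\E\big[\norm{R(Y^N,A^N)}^c\big]=O(N^\gamma)$ for every $\gamma>0$ is precisely what the companion Lemma \ref{docsocscddssdc} proves, and the present lemma is designed to avoid needing more than that.
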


\begin{proof}

To begin with, thanks to Hölder's inequality, one can always assume that $l=1$. Besides, one has for any $p\geq 1$,
$$ \E\left[ \left| \ts_N\left(Q_1(Y^N,A^N)\right) \right|^{c} \right] \leq \E\left[ \left| \ts_N\left(Q_1(Y^N,A^N)\right) \right|^{cp} \right]^{1/p}.$$
Consequently, one can assume that $c$ is an even integer. In which case we have that
$$ \left| \ts_N\left(Q_1(Y^N,A^N)\right) \right|^{c} \leq \ts_N\left(|Q_1(Y^N,A^N)|^c\right) = \ts_N\left(\left(Q_1(Y^N,A^N)^*Q_1(Y^N,A^N)\right)^{c/2}\right). $$
Hence, it is sufficient to prove that for a given $Q\in\F_{d,q}$,
\begin{equation}
    \sup_{\N\in\N^*} \left| \E\left[ \ts_N\left(Q(Y^N,A^N)\right) \right]\right| <\infty.
\end{equation}
By linearity, one can also assume that there exists $P_1,\dots,P_k,R_1,\dots,R_k \in\A_{d,q}$ non-commutative polynomials such that $P_1,\dots,P_k$ are self-adjoint and $Q= e^{\i P_1}R_1\cdots e^{\i P_k}R_k$. Then thanks once again to Hölder's inequality, one has that
$$ \left| \E\left[ \ts_N\left(Q(Y^N,A^N)\right) \right]\right| \leq \prod_{j=1}^k \E\left[ \ts_N\left(\left(R_j(Y^N,A^N)^*R_j(Y^N,A^N)\right)^k\right) \right]^{\frac{1}{2k}} \E\left[ \ts_N\left(\left|e^{\i P_j(Y^N,A^N)}\right|^{2k}\right) \right]^{\frac{1}{2k}}.$$
Let us now remark that given a self-adjoint matrix $T$, we have that $\left|e^{\i T}\right|=\sqrt{e^{-\i T}e^{\i T}}=I_N$, Thus
$$ \left| \E\left[ \ts_N\left(Q(Y^N,A^N)\right) \right]\right| \leq \prod_{j=1}^k \E\left[ \ts_N\left(\left(R_j(Y^N,A^N)^*R_j(Y^N,A^N)\right)^k\right) \right]^{\frac{1}{2k}}.$$
Thanks to Theorem 5.4.5 of \cite{alice}, such a quantity is uniformly bounded with respect to $N$ (since we assumed that the norm of our deterministic matrices was uniformly bounded with respect to $N$).

\end{proof}

We can now state our concentration estimate for the trace.

\begin{prop}
	
	\label{concentrtrace}
	
	Given 
	\begin{itemize}
		\item $Y^N$ a $d$-tuple of independent Wigner matrices as in Definition \ref{defwigner}, 
		\item $X^N$ a $d$-tuple of independent GUE random matrices, independent from $Y^N$,
		\item $A^N$ deterministic matrices such that $\sup_{1\leq i\leq q,N\in\N^*} \norm{A_i^N} <\infty$.
	\end{itemize}
    Let $y_i\in\R$, $y = 1 + \max_i |y_i|$, $P_1,\dots,P_k,R_1,\dots,R_k \in\A_{d,q}$ non-commutative polynomials. If we assume that $P_1,\dots,P_k$ are self-adjoint, then with $Q= e^{\i P_1 y_1}R_1\cdots e^{\i P_k y_k}R_k$, we have for any $\varepsilon>0$ that with high probability, 
	\begin{align}
	\label{lkeshfs}
		&\ts_{N}\left( Q(Y^N,A^N) \right) = \E\left[ \ts_{N}\left(Q(X^N,A^N) \right) \right] + \mathcal O\left( N^{\varepsilon} \frac{y^4}{N} \right) .
	\end{align}
	Besides if $Y^N$ a $d$-tuple of independent GOE or GUE matrices, then one can replace $y^4$ by $y^2$ in the previous equality.
	
\end{prop}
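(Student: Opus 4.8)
\textbf{Proof strategy for Proposition \ref{concentrtrace}.}

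The plan is to interpolate continuously between the Wigner tuple $Y^N$ and the GUE tuple $X^N$ via a matrix Ornstein--Uhlenbeck type flow, and to control the resulting derivative by means of the cumulant expansion of Proposition \ref{cuulantmexpansion} together with the high-order moment bounds of Lemma \ref{docsocdssdc}. Concretely, for $t\in[0,T]$ set $Y^N_i(t) = e^{-t/2} Y^N_i + (1-e^{-t})^{1/2} X^N_i$, so that $Y^N(0)=Y^N$ and $Y^N(T) \to X^N$ as $T\to\infty$ with exponentially small error in $T$. Write $\Psi(t) = \E\big[ \ts_N( Q(Y^N(t),A^N) ) \big]$ and differentiate: $\Psi'(t)$ is a linear combination of terms $\E[ \ts_N( \partial_{x_{r,s}} Q \cdot (\text{entry fluctuation}) ) ]$, which after applying the cumulant expansion splits into a Schwinger--Dyson-type contribution coming from the second cumulant (which, by construction of the OU flow, is designed to cancel the analogous GUE term up to order $1/N$) plus higher-cumulant remainders. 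The first step is therefore to set up this flow carefully and write the exact ODE satisfied by $\Psi$, keeping all error terms explicit; one must track how each non-commutative derivative $\partial_i Q$ of the exponential monomial $Q = e^{\i P_1 y_1} R_1 \cdots e^{\i P_k y_k} R_k$ is again (after evaluating the Duhamel integrals as in Definition \ref{3technicality}) a finite linear combination of bounded-operator-norm words whose moments are controlled uniformly in the $P_i$ and in $N$ by Lemma \ref{docsocdssdc}.

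The second step is the \emph{recursive moment estimate}: rather than bounding $\E[|\ts_N(Q(Y^N)) - \E \ts_N(Q(X^N))|]$ directly, one bounds the $2n$-th moment $M_n(t) := \E\big[ |\ts_N Q(Y^N(t)) - \E\ts_N Q(Y^N(T)) |^{2n} \big]$ for all $n$, by differentiating $M_n$ in $t$ and showing a self-improving inequality of the form $|M_n'(t)| \lesssim N^{-2} y^{C} \big( M_n(t) + M_n(t)^{1-1/n} + \dots \big)$, where the key gains of $N^{-2}$ come from: (i) one explicit factor $N^{-1}$ from the normalized trace structure of the cumulant-expansion correction terms (third cumulant and the $N^{-1}$ ``transpose'' term in \eqref{SDEPC}), and (ii) a second factor $N^{-1}$ obtained because the leading second-cumulant term is arranged to cancel between the Wigner and GUE sides, so what survives is a difference that is itself $O(N^{-1})$ after using concentration of the already-centred traces (this is where one feeds the lower-order moment bounds back in). Integrating this differential inequality from $0$ to $T$, and finally letting $T = C\log N$ so the OU flow has reached the GUE to within $N^{-k}$, yields $M_n(t) \lesssim (N^{\varepsilon} y^4/N)^{2n}$ for every $n$, and Markov's inequality then gives the stated high-probability bound. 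For the GOE/GUE case the improvement from $y^4$ to $y^2$ comes from the fact that no interpolation is needed at all (or it is trivial), so only the genuine $N^{-1}$ Schwinger--Dyson correction of Proposition \ref{3SD}, respectively the exact identity of Proposition \ref{skscksd}, contributes, and this correction carries only two powers of $y$ rather than four.

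The main obstacle will be step two: obtaining the correct \emph{power of $y$} in the differential inequality. Each non-commutative derivative $\partial_i e^{\i P y}$ produces a factor $\i y \int_0^1 (e^{\i \alpha P y} \otimes 1)\,\partial_i P\,(1\otimes e^{\i(1-\alpha)Py})\,d\alpha$, hence one power of $y$; the cumulant expansion must be carried to high enough order $\ell$ that the remainder is negligible, but each order of cumulant beyond the second brings in additional derivatives $\partial_{x_{r,s}}$ acting on the exponentials, each contributing another power of $y$ and another operator-norm-bounded word. The delicate bookkeeping is to show that, after exploiting the $N^{-1}$ gains and the cancellation with the GUE term, the total power of $y$ that one cannot absorb is exactly $4$ (and $2$ in the Gaussian case) — roughly, two powers survive from the two surviving derivatives in the $N^{-2}$ correction term, doubled because the recursion involves products. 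Making this count tight, uniformly in all the self-adjoint polynomials $P_i$ (so that the bound does not blow up with $\deg P_i$), is the technical heart of the argument; it crucially uses that $\|e^{\i P(Y^N)}\| = 1$ deterministically, so the exponentials never contribute to operator norms, only to the powers of $y$ via their derivatives, exactly as exploited already in the proof of Lemma \ref{docsocdssdc}.
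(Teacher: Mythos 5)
Your proposal identifies all the right ingredients — the Ornstein--Uhlenbeck interpolation $X_t^N = Y^N e^{-t/2} + X^N(1-e^{-t})^{1/2}$, the cumulant expansion of Proposition~\ref{cuulantmexpansion}, the uniform moment bounds of Lemma~\ref{docsocdssdc}, and high-order moments plus Markov — but it organizes the moment estimate differently from the paper, and a few of the power-counting claims are imprecise enough to matter.

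\textbf{Structural difference.} You propose differentiating the \emph{fully $t$-dependent} quantity $M_n(t) := \E\big[|\ts_N Q(Y^N(t)) - c|^{2n}\big]$ and running a Gronwall argument, which then needs a base case bound on $M_n(T)$ (the GUE-side variance) plus control of the accumulated Gronwall constant. The paper avoids both: it fixes once and for all $M_N := \ts_N(Q(Y^N,A^N)) - \E[\ts_N(Q(X^N,A^N))]$ and interpolates \emph{only in the first factor} of $\E\big[\ts_N(Q(X_t^N))\,M_N^{n-1}\overline{M_N}^{n}\big]$. Integrating the $t$-derivative from $0$ to $\infty$ then telescopes (using independence of $X^N$ and $Y^N$ at $t=\infty$) to the \emph{exact identity} $\E[|M_N|^{2n}] = -\frac12\sum_s\int_0^\infty e^{-t}\E[(\cdots)M_N^{n-1}\overline{M_N}^n]\,dt$, and the cumulant-expansion corrections are absorbed by a single Young-inequality rearrangement $(1 - C_nN^{-\varepsilon})\E[|M_N|^{2n}] \le C_n(y^4/N^{1-\varepsilon})^{2n}$, no Gronwall. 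Your route is viable in principle, but it is strictly more work and requires a separate GUE moment estimate that the paper never needs.

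\textbf{Power counting.} Your explanation of the $y^4$ — ``two powers survive from the two surviving derivatives in the $N^{-2}$ correction term, doubled because the recursion involves products'' — is not what actually happens, and the heuristic ``carry the cumulant expansion to high enough order that the remainder is negligible'' points the wrong way: each additional entry-derivative hits an exponential and costs one more power of $y$, so higher-order expansions make the $y$-dependence \emph{worse}, not better. The paper stops the expansion at $\ell = 2$ precisely because that is optimal: the first-order deviations in the second cumulant (pseudo-variance, diagonal variances) contribute $y^2$; the third-cumulant term contributes $y^3$; and the dominant $y^4$ comes from the explicit integral remainder $\varepsilon_3$ of Proposition~\ref{cuulantmexpansion} (Step 4), where the four effective entry-derivatives on the exponentials each bring a $y$. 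Your claim (ii), that the residual second-cumulant mismatch is ``itself $O(N^{-1})$ after using concentration of the already-centred traces,'' is also not how the paper proceeds; the cancellation with the GUE variance is exact for $\E[|Y_{ij}|^2]$ ($i\ne j$), and the residual pseudo-variance and diagonal terms are handled directly with the Cauchy--Schwarz/Young estimates in Step~2, not by iterating concentration.

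\textbf{Gaussian case.} Your statement that for GOE/GUE ``no interpolation is needed at all (or it is trivial)'' is only true for GUE. For GOE the interpolation is genuine; what actually gives the improvement to $y^2$ is that the Gaussian cumulant expansion terminates at second order, i.e.\ the Schwinger--Dyson equation \eqref{SDEPC} is exact with no remainder, so Steps 3 and 4 (the $y^3$ and $y^4$ terms) are simply absent. The surviving GOE extra terms (the transpose term $h(\partial_1 Q)/N$ and the pseudo-variance $\E[(Y_{ij})^2] \ne 0$) contribute only $y^2$.

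None of these issues are fatal — a careful completion of your plan should produce a correct proof — but the paper's one-factor interpolation is cleaner, and getting the exact exponent $4$ (rather than some unspecified $C$) requires identifying $\ell=2$ as the right truncation order and bounding the Taylor remainder explicitly, which your sketch currently glosses over.
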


In order to render the proof easier to read, we divide it in five steps. The first one establishes an equation on the moments of $M_N$ (defined in Equation \eqref{elkcmlksc} below). Then in each of the following three steps we bound a specific error term. In the last step, we study the case of the GUE and the GOE.

\begin{proof}[Proof of Proposition~\ref{concentrtrace}]
	\textbf{Step 1 (Using the cumulant expansion):} Our first step is to define
    \begin{equation}
        \label{elkcmlksc}
        M_N := \ts_{N}\Big(Q(Y^N,A^N) - \E[Q(X^N,A^N)]\Big) .
    \end{equation}
	Then we want to study the moments of $M_N$ in order to use Markov's inequality. To do so, we are going to prove that for any $n\in\N$, for any $\varepsilon>0$, there exists a constant $C_{n}$ such that for $N$ large enough,
	\begin{equation}
		\label{indu1}
		\E\left[ \left| M_N \right|^{2n} \right] \leq C_{n} \left(\frac{y^{4}}{N^{1-\varepsilon}}\right)^{2n}.
	\end{equation}
	Let us now set for $t\in\R^+$,
    \begin{equation}
    \label{dfvkdnc}
        X_t^N = \Big(Y^N e^{-t/2} + X^N (1-e^{-t})^{1/2}, A^N\Big),
    \end{equation}
	so that we have the following equality,
	\begin{align*}
		&\frac{d}{dt} \E\left[ \ts_N\left( Q(X_t^N) \right)\times  M_N^{n-1} \overline{M_N}^{n} \right] \\
		&= - \frac{e^{-t}}{2} \sum_{1\leq s\leq d} \E\left[ \ts_N\left( D_sQ(X_t^N)\left( e^{t/2} Y_s^N- \frac{1}{(1-e^{-t})^{1/2}}X_s^N \right) \right) \times  M_N^{n-1} \overline{M_N}^{n} \right] \\
		&= - \frac{e^{-t}}{2} \sum_{1\leq s\leq d} \E\left[ \left( e^{t/2} \ts_N\left( D_sQ(X_t^N) Y_s^N\right) - \ts_N^{\otimes 2}\left( \partial_sD_sQ(X_t^N) \right)\right) \times  M_N^{n-1} \overline{M_N}^{n} \right],
	\end{align*}
	where we used the Schwinger-Dyson equations, i.e. Proposition \ref{skscksd}, to get the last line. Note that from \eqref{dfvkdnc}, the non-commutative differential of $X_t^N$ with respect to $X^N$ is $(1-e^{-t})^{1/2} 1\otimes 1$, hence why the factor $(1-e^{-t})^{-1/2}$ disappears from the second to the third line above. Since 
	$$ \E\left[ \ts_N\left( Q(X_0^N) \right)\times M_N^{n-1} \overline{M_N}^{n}\right] = \E\left[ \ts_N\left( Q(Y^N) \right)\times  M_N^{n-1} \overline{M_N}^{n} \right], $$
	$$ \lim_{t\to\infty} \E\left[ \ts_N\left( Q(X_t^N) \right)\times  M_N^{n-1} \overline{M_N}^{n} \right] = \E\left[ \ts_N\left( Q(X^N) \right)\right] \times \E\left[M_N^{n-1} \overline{M_N}^{n} \right], $$
	we have that
	\begin{align}
	\label{sdpdso}
		\E\left[ |M_N|^{2n} \right] &= \frac{1}{2} \sum_{1\leq s\leq d} \int_0^{\infty} e^{-t} \E\left[ \left( e^{t/2} \ts_N\left( D_sQ(X_t^N) Y_s^N\right) - \ts_N^{\otimes 2}\left( \partial_sD_sQ(X_t^N) \right)\right) \times  M_N^{n-1} \overline{M_N}^{n}\right] dt.
	\end{align}
	 Next we want to use the cumulant expansion in Proposition~\ref{cuulantmexpansion} to the third order ($\ell+1=3)$ with the choices
  \begin{equation}
  u=\Re((Y_s^N)_{i,j})\,\qquad v=\Im((Y_s^N)_{i,j})
  \end{equation}
  and
  \begin{equation}
      \Phi_{i,j}: \left(\Re((Y_s^N)_{i,j}),\Im((Y_s^N)_{i,j})\right) \longmapsto e^{t/2} \ts_N\left( D_sQ(X_t^N) E_{i,j} \right) M_N^{n-1} \overline{M_N}^{n}.
  \end{equation}
   Here the $\Phi_{i,j}$ is chosen to reproduce the first term in the integrand on the right side of~\eqref{sdpdso}, i.e.
    \begin{align*}
        \E\left[ \left( e^{t/2} \ts_N\left( D_sQ(X_t^N) Y_s^N\right)\right)M_N^{n-1}\overline{M_N}^n\right]=\sum_{1\le i,j\le N} \E\left[(Y_s^N)_{i,j} \Phi_{i,j}\left(\Re((Y_s^N)_{i,j}),\Im((Y_s^N)_{i,j})\right)\right].
    \end{align*}

    \noindent We now compute the first derivatives of $\Phi_{i,j}$. If $i\neq j$, we have that 
	\begin{align}
		\label{idxud}
		\partial_u \Phi_{i,j}(u,v) =\ &\ts_N\left( \big(\partial_s D_sQ(X_t^N)\# (E_{i,j}+E_{j,i})\big) E_{i,j} \right) M_N^{n-1} \overline{M_N}^{n} \nonumber \\
		&+ \left(n-1\right) \ts_N\left( D_sQ(X_t^N) E_{i,j} \right) \ts_N\left( D_s{Q}(X_0^N) (E_{i,j}+E_{j,i}) \right) M_N^{n-2} \overline{M_N}^{n} \\
		&+ n \ts_N\left( D_sQ(X_t^N) E_{i,j} \right) \ts_N\left( D_sQ^*(X_0^N) (E_{i,j}+E_{j,i}) \right) M_N^{n-1} \overline{M_N}^{n-1}, \nonumber
	\end{align}
	and
	\begin{align}
		\label{idxud2}
		\partial_v \Phi_{i,j}(u,v) =\ &\i \ts_N\left( \big(\partial_s D_sQ(X_t^N)\# (E_{i,j}-E_{j,i})\big) E_{i,j} \right) M_N^{n-1} \overline{M_N}^{n} \nonumber \\
		&+\i \left(n-1\right) \ts_N\left( D_sQ(X_t^N) E_{i,j} \right) \ts_N\left( D_s{Q}(X_0^N) (E_{i,j}-E_{j,i}) \right) M_N^{n-2} \overline{M_N}^{n} \\
		&+\i n \ts_N\left( D_sQ(X_t^N) E_{i,j} \right) \ts_N\left( D_sQ^*(X_0^N) (E_{i,j}-E_{j,i}) \right) M_N^{n-1} \overline{M_N}^{n-1}. \nonumber
	\end{align}
	Further if $i=j$, then $\partial_u \Phi_{i,j}(u,v)$ is obtained similarly but with $E_{i,i}$ instead of $E_{i,j}+E_{j,i}$, and $\partial_v \Phi_{i,j}(u,v)=0$. Besides with $u$ and $v$ defined as such, their cumulants satisfy
	\begin{align}
	\label{eqcumul}
		&\kappa_{1,0} = \frac{1}{2}\Re\E[(Y_s^N)_{i,j}]=0 , \nonumber \\ 
		&\kappa_{0,1} = \frac{1}{2\i}\Im\E[(Y_s^N)_{i,j}]=0 , \nonumber \\ 
		&\kappa_{2,0}+\kappa_{0,2} = \E[|(Y_s^N)_{i,j}|^2],\nonumber \\
		&\kappa_{2,0} - \kappa_{0,2} +2\i \kappa_{1,1} = \E[(Y_s^N)_{i,j}^2]. 
	\end{align}

	\noindent In particular, as we set in Definition \ref{defwigner}, if $i\neq j$, $\kappa_{2,0}+\kappa_{0,2} = 1/N$. Thus by using the cumulant expansions in Proposition \ref{cuulantmexpansion} with $\ell=2$, we get that
 \begin{subequations}
 \label{guewig full set}
	\begin{align}
		\label{guewig}
		 &e^{t/2}\E\left[ (Y_s^N)_{i,j} \times \ts_N\left( D_sQ(X_t^N) E_{i,j} \right) M_N^{n-1} \overline{M_N}^{n} \right] \\
		\label{guewig2}
		 &= \frac{1}{N}\E\left[ \ts_N\left( (\partial_s D_sQ(X_t^N)\# E_{j,i}) E_{i,j} \right) M_N^{n-1} \overline{M_N}^{n} \right] \\
		\label{guewig3} &\quad+ \frac{n-1}{N} \E\left[ \ts_N\left( D_sQ(X_t^N) E_{i,j} \right) \ts_N\left( D_s{Q}(X_0^N) E_{j,i} \right) M_N^{n-2} \overline{M_N}^{n} \right] \\ \label{guewig4}
		 &\quad+ \frac{n}{N} \E\left[ \ts_N\left( D_sQ(X_t^N) E_{i,j} \right) \ts_N\left( D_sQ^*(X_0^N) E_{j,i} \right) M_N^{n-1} \overline{M_N}^{n-1} \right] \\ \label{guewig5}
		 &\quad+ \left( \E[(Y_s^N)_{i,j}^2] -\frac{\1_{i=j}}{N} \right)\E\left[ \ts_N\left( (\partial_s D_sQ(X_t^N)\# E_{i,j}) E_{i,j} \right) M_N^{n-1} \overline{M_N}^{n} \right] \\ \label{guewig6}
		 &\quad+ \left(n-1\right)\left( \E[(Y_s^N)_{i,j}^2] -\frac{\1_{i=j}}{N} \right) \E\left[ \ts_N\left( D_sQ(X_t^N) E_{i,j} \right) \ts_N\left( D_s{Q}(X_0^N) E_{i,j} \right) M_N^{n-2} \overline{M_N}^{n} \right] \\ \label{guewig7}
		 &\quad+ n \left( \E[(Y_s^N)_{i,j}^2] -\frac{\1_{i=j}}{N} \right) \E\left[ \ts_N\left( D_sQ(X_t^N) E_{i,j} \right) \ts_N\left( D_sQ^*(X_0^N) E_{i,j} \right) M_N^{n-1} \overline{M_N}^{n-1} \right] \\ \label{guewig8}
		 &\quad+ \frac{1}{N^{3/2}} H_{i,j}^{N,n} + R^{N,n}_{i,j}, 
	\end{align}
 \end{subequations}
	where $H_{i,j}^{N,n}$ is the term coming from the second order derivatives (i.e. $\partial_u^2 \Phi_{i,j}(u,v)$, $\partial_v^2 \Phi_{i,j}(u,v)$ and $\linebreak \partial_u\partial_v \Phi_{i,j}(u,v)$), and $R^{N,n}_{i,j}$ is the remainder $\varepsilon_3$, consequently this term correspond to the third order derivatives of $\Phi_{i,j}$. \\
 
	\textbf{Step 2 (Bounding the first order error term):} In this step, we focus on upper bounding the terms in Equations \eqref{guewig3} to~\eqref{guewig7}, to do so we use the following inequality. Given two matrices $A$ and $B$ of size $N$,
	\begin{equation}
		\sum_{1\leq i,j\leq N} |A_{i,j}B_{i,j}| \leq \sqrt{\sum_{1\leq i,j\leq N} |A_{i,j}|^2} \sqrt{\sum_{1\leq i,j\leq N} |A_{i,j}|^2} = N \sqrt{\ts_{N}(AA^*)\ts_{N}(BB^*)}.
	\end{equation}
	Consequently, thanks to the moment assumption in~\eqref{assump}, after summing over $i$ and $j$, one can bound the terms in~\eqref{guewig3}, ~\eqref{guewig4}, \eqref{guewig6} and~\eqref{guewig7} by
	\begin{equation}
		\label{extraterm1}
		\frac{y^2}{N^2} \E\left[|M_N|^{2n-2}  \sqrt{\ts_N(B_1(Z^N)) \ts_N(B_2(Z^N))}\right], 
	\end{equation}
	for some $B_1,B_2\in\F_{2d,q}$ and $Z^N=(X^N,Y^N,A^N)$. Indeed, for example, if one looks separately at \eqref{guewig6},
    \begin{align*}
        &\sum_{1\leq i,j\leq N} \left| \left( \E[(Y_s^N)_{i,j}^2] -\frac{\1_{i=j}}{N} \right) \E\left[ \ts_N\left( D_sQ(X_t^N) E_{i,j} \right) \ts_N\left( D_s{Q}(X_0^N) E_{i,j} \right) M_N^{n-2} \overline{M_N}^{n} \right] \right| \\
        &\leq \frac{\sup_{i,j,N} N\E[|(Y_s^N)_{i,j}|^2]+1}{N^3} \E\left[ \sum_{1\leq i,j\leq N} \left| \left(D_sQ(X_t^N) \right)_{j,i}\right| \left|\left( D_s{Q}(X_0^N) \right)_{j,i}\right| |M_N|^{2n-2} \right]\\
        &\leq \frac{C_2}{N^2} \E\left[ \sqrt{\ts_N\left(D_sQ(X_t^N)^*D_sQ(X_t^N) \right)} \sqrt{\ts_N\left( D_s{Q}(X_0^N)^*D_s{Q}(X_0^N) \right)} |M_N|^{2n-2} \right],
    \end{align*}
    with $C_2 = \sup_{i,j,N} N\E[|(Y_s^N)_{i,j}|^2]+1$, which is finite thanks to moment assumption in~\eqref{assump}. Then if we write $D_sQ = \sum_l c_l M_l$ where $c_l\in\C$ and $M_l$ are unitary monomials in $X_t^N$ and $e^{\i R(X_t^N)}$ (for any self-adjoint $R$), then thanks to Equation \eqref{3ext}, $d_l=\sup_{y_1,\dots,y_k\in\R} c_l/y$ is finite and
    \begin{align*}
        &\sqrt{\ts_N\left(D_sQ(X_t^N)^*D_sQ(X_t^N) \right)} \\
        &\leq \sum_l |c_l| \sqrt{\ts_N\left(M_l(X_t^N)^*M_lQ(X_t^N) \right)} \\
        &\leq y \sum_l |d_l| \sqrt{\ts_N\left(M_l(X_t^N)^*M_lQ(X_t^N) \right)} \\
        &\leq y K \sqrt{\sum_l \ts_N\left(M_l(X_t^N)^*M_lQ(X_t^N) \right)},
    \end{align*}
    for some constant $K$. Hence we can find  $B_1,B_2\in\F_{2d,q}$ such that
    \begin{align*}
        &\sum_{1\leq i,j\leq N} \left| \left( \E[(Y_s^N)_{i,j}^2] -\frac{\1_{i=j}}{N} \right) \E\left[ \ts_N\left( D_sQ(X_t^N) E_{i,j} \right) \ts_N\left( D_s{Q}(X_0^N) E_{i,j} \right) M_N^{n-2} \overline{M_N}^{n} \right] \right| \\
        &\leq \frac{y^2}{N^2} \E\left[|M_N|^{2n-2}  \sqrt{\ts_N(B_1(Z^N)) \ts_N(B_2(Z^N))}\right].
    \end{align*}
    
    \noindent Similarly after summing over $i$ and $j$, one can bound the terms in~\eqref{guewig5} by 
    \begin{equation}
		\label{extraterm2}
		\frac{y^2}{N} \E\left[|M_N|^{2n-1}  \sqrt{\ts_N(B_1(Z^N)) \ts_N(B_2(Z^N))}\right], 
	\end{equation}
    Then by using Lemma \ref{podvfldmvd} with $\alpha=y^2/N^{1-\varepsilon}$, $\mathcal{X}=N^{-\frac{g}{2n-g}\varepsilon} M_N$, $\mathcal{Y}=\sqrt{\ts_N(B_1(Z^N)) \ts_N(B_2(Z^N))}$, and $g=1$ or $2$, we get that \eqref{extraterm1} and \eqref{extraterm2} can be upper bounded by
    $$ N^{-\varepsilon} \E\left[|M_N|^{2n}\right] + \left(\frac{y^2}{N^{1-\varepsilon}}\right)^{2n} \E\left[ \left(\ts_N(B_1(Z^N)) \ts_N(B_2(Z^N))\right)^{\frac{n}{g}} \right]. $$
    Thus, thanks to Lemma \ref{docsocdssdc}, there exists a constant $C_n$ such that the terms in Equations \eqref{extraterm1} and \eqref{extraterm2} can be upper bounded by
    \begin{equation*}
        N^{-\varepsilon} \E\left[|M_N|^{2n}\right] + C_n \left(\frac{y^2}{N^{1-\varepsilon}}\right)^{2n}.
    \end{equation*}
    Consequently, after summing over $i,j$, Equation \eqref{guewig full set} yields
	\begin{align}
		\label{guewig11}
        &\left| \E\left[ \left( e^{t/2} \ts_N\left( D_sQ(X_t^N) Y_s^N\right) - \ts_N^{\otimes 2}\left( \partial_sD_sQ(X_t^N) \right)\right) \times  M_N^{n-1} \overline{M_N}^{n} \right] \right|\\
		&\leq \frac{1}{N^{3/2}} \sum_{1\leq i,j\leq N} H_{i,j}^{N,n} + \sum_{1\leq i,j\leq N} R^{N,n}_{i,j} + N^{-\varepsilon} \E\left[|M_N|^{2n}\right] + C_n \left(\frac{y^2}{N^{1-\varepsilon}}\right)^{2n}.  \nonumber
	\end{align}

	\textbf{Step 3  (Bounding the second order error term):} Next we want to tackle the term $H_{i,j}^{N,n}$ in~\eqref{guewig11}. We recall that it is the term coming from the second order derivatives, i.e. $\partial_u^2 \Phi_{i,j}(u,v)$, $\partial_v^2 \Phi_{i,j}(u,v)$ and $\partial_u\partial_v \Phi_{i,j}(u,v)$, in \eqref{guewig8}. The first order derivatives of $\Phi_{i,j}$ are given in Equations \eqref{idxud} and \eqref{idxud2}. Besides, we compute
	\begin{align*}
		\partial_u M_N^{n} =\ e^{-t/2}n \ts_N\left(  D_sQ(X_t^N) (E_{i,j}+E_{j,i}) \right) M_N^{n-1},
	\end{align*}
    \begin{align*}
		\partial_v M_N^{n} =\ \i e^{-t/2}n \ts_N\left(  D_sQ(X_t^N) (E_{i,j}-E_{j,i}) \right) M_N^{n-1},
	\end{align*}
    \begin{align*}
		\partial_u \ts_N\left( D_sQ(X_t^N) E_{i,j} \right) =\ e^{-t/2}\ts_N\left( \big(\partial_s D_sQ(X_t^N)\# (E_{i,j}+E_{j,i})\big) E_{i,j} \right) ,
    \end{align*}
    \begin{align*}
		\partial_v \ts_N\left( D_sQ(X_t^N) E_{i,j} \right) =\ \i e^{-t/2}\ts_N\left( \big(\partial_s D_sQ(X_t^N)\# (E_{i,j}-E_{j,i})\big) E_{i,j} \right) ,
    \end{align*}
    \begin{align*}
		&\partial_u \ts_N\left( \big(\partial_s D_sQ(X_t^N)\# (E_{i,j}+E_{j,i})\big) E_{i,j} \right) \\
        &=  e^{-t/2} \ts_N\left( \big((\partial_s\otimes\id)\circ\partial_s D_sQ(X_t^N)\# (E_{i,j}+E_{j,i},E_{i,j}+E_{j,i})\big) E_{i,j} \right) \\
        &\quad + e^{-t/2} \ts_N\left( \big((\id\otimes \partial_s)\circ\partial_s D_sQ(X_t^N)\# (E_{i,j}+E_{j,i},E_{i,j}+E_{j,i})\big) E_{i,j} \right),
	\end{align*}
    \begin{align*}
		&\partial_v \ts_N\left( \big(\partial_s D_sQ(X_t^N)\# (E_{i,j}+E_{j,i})\big) E_{i,j} \right) \\
        &= \i e^{-t/2} \ts_N\left( \big((\partial_s\otimes\id)\circ\partial_s D_sQ(X_t^N)\# (E_{i,j}-E_{j,i},E_{i,j}+E_{j,i})\big) E_{i,j} \right) \\
        &\quad + \i e^{-t/2} \ts_N\left( \big((\id\otimes \partial_s)\circ\partial_s D_sQ(X_t^N)\# (E_{i,j}+E_{j,i},E_{i,j}-E_{j,i})\big) E_{i,j} \right).
	\end{align*}
    Moreover, thanks to the moment assumption in~\eqref{assump}, one can bound the mixed cumulants of the real and imaginary part of $\sqrt{N}(Y_s^N)_{i,j}$ uniformly over $i,j$ and $N$. Note that since this term comes from the second order derivative, the cumulants which appears due to Proposition \ref{cuulantmexpansion} are of order $3$, i.e. $\kappa_{n,m}$ with $n+m=3$, hence why we normalize $H_{i,j}^{N,n}$ by $N^{3/2}$ in Equation \eqref{guewig8}. Consequently, $H_{i,j}^{N,n}$ can be bounded by a linear combination (whose coefficients are independent of $i,j$) of terms of the form
	$$ \E\left[ |\ts_N(AF_1BF_2CF_3)|\times  |M_N|^{2n-1} \right] ,$$
	$$ n\E\left[| \ts_N(AF_1BF_2) \ts_{N}(C F_3)|\times |M_N|^{2n-2} \right] ,$$
	$$ n^2\E\left[| \ts_N(AF_1) \ts_{N}(B F_2) \ts_{N}(C F_3)|\times |M_N|^{2n-3} \right] ,$$
	
	\noindent with $F_1,F_2,F_3\in \{E_{i,j},E_{j,i}\}$ and $A,B,C$ elements of $\F_{d,q}$ evaluated in $X^N_t$. So one can sum $H_{i,j}^{N,n}$ over $i,j$ and by using the following kind of inequalities,
	\begin{align}
		\sum_{1\leq i,j\leq N} |A_{i,j} B_{i,j} C_{i,j}| &\leq \sqrt{\sum_{1\leq i,j\leq N} |A_{i,j} B_{i,j}|^2} \sqrt{\sum_{1\leq i,j\leq N} |C_{i,j}|^2}  \\
		&\leq \sqrt{\sum_{1\leq i,j\leq N} |A_{i,j}|^2 \sum_{1\leq i,j\leq N} |B_{i,j}|^2 \tr_N(C^*C) } \nonumber \\
		&\leq \sqrt{\tr_N(A^*A) \tr_N(B^*B) \tr_N(C^*C) } \nonumber \\
		&\leq N^{3/2} \sqrt{\ts_N(A^*A) \ts_N(B^*B) \ts_N(C^*C) }, \nonumber
	\end{align}
	and
	\begin{align}
		\label{optieked}
		\sum_{1\leq i,j\leq N} |A_{i,i} B_{j,j} C_{i,j}| &\leq \sqrt{\sum_{1\leq i,j\leq N} |A_{i,i} B_{j,j}|^2} \sqrt{\sum_{1\leq i,j\leq N} |C_{i,j}|^2} \\
		&\leq \sqrt{\tr_N(A^*A) \tr_N(B^*B) \tr_N(C^*C) } \nonumber \\
		&\leq N^{3/2} \sqrt{\ts_N(A^*A) \ts_N(B^*B) \ts_N(C^*C) }, \nonumber
	\end{align}	

	\noindent one has the following bounds for some constant $C_n$, and $B_1,B_2,B_3\in\F_{2d,q}$ and $Z^N=(X^N,Y^N,A^N)$,
    \begin{align*}
         \frac{1}{N^{3/2}}  \sum_{i,j} | H_{i,j}^{N,n}| \leq C_n \E\Bigg[ &\left( \frac{y^3}{N} |M_N|^{2n-1} + \frac{y^3}{N^2} |M_N|^{2n-2} + \frac{y^3}{N^3} |M_N|^{2n-3} \right) \\
         &\quad\times \sqrt{\ts_N(B_1(Z^N)) \ts_N(B_2(Z^N)) \ts_N(B_3(Z^N))} \Bigg].  
    \end{align*}
    Next, we use Lemma \ref{podvfldmvd} with 
    $$\alpha=y^3/N^{1-\varepsilon},\ \mathcal{X}=N^{-\frac{g}{2n-g}\varepsilon} M_N,\ \mathcal{Y}=C_n\sqrt{\ts_N(B_1(Z^N)) \ts_N(B_2(Z^N)) \ts_N(B_3(Z^N))},$$
    and $g=1,2,3$. Thanks to Lemma \ref{docsocdssdc}, we get that there exists a constant $C_{n}$ such that
    \begin{equation}
        \label{soimcoskmocsmc}
        \frac{1}{N^{3/2}}\sum_{i,j} | H_{i,j}^{N,n}| \leq N^{-\varepsilon} \E\left[ |M_N|^{2n} \right] + C_{n}\left(\frac{y^3}{N^{1-\varepsilon}}\right)^{2n}.
    \end{equation}
	
	\textbf{Step 4  (Bounding the third order error term):} It remains to bound $R^{N,n}_{i,j}$ in~\eqref{guewig11}. Since we have an explicit expression for the remainder $\varepsilon_3$ in Proposition \ref{cuulantmexpansion}, we let $ M_N^{u,i,j,s}$ be defined just like $M_N$ but with the entries $(j,i)$ and $(i,j)$ of the matrix $Y_s^N$ multiplied by $u\in[0,1]$. Then one has that $R^{N,n}_{i,j}$ is bounded by a linear combination (whose coefficients are independent of $i,j$) of terms of the form,
	\begin{equation}
		\label{kjfd}
		\frac{y^4}{N^{2-r/2}} \E\left[ \left| (Y_s^N)_{i,j} \right|^{r} \int_{0}^{1} \frac{|M_N^{u,i,j,s}|^{2n-m}}{N^m} \left| \ts_N(B_1(\widetilde{Z}^N))\dots \ts_N(B_l(\widetilde{Z}^N))\right|^{1/8} du \right] ,
	\end{equation}
	where $r,m\in [1,4 ] $, $B_1,\dots,B_l\in\F_{d,q}$, and $\widetilde{Z}^N$ is defined like $Z^N$ but with the entries $(j,i)$ and $(i,j)$ of the matrix $Y_s^N$ multiplied by $u$. This is notably due to the fact that by Hölder's inequality, for example, for any matrices $R,S,T,U$,
    $$ \ts_N(R E_{i,j} S E_{i,j} T E_{j,i} U E_{i,j}) \leq \ts_N(|R|^8)^{1/8}\ts_N(|S|^8)^{1/8} \ts_N(|T|^8)^{1/8} \ts_N(|U|^8)^{1/8}.$$
    Besides, given a matrix $R$, 
    \begin{equation}
    	\label{epsi}
        \left|\ts_N\left(R\left(E_{i,j}(Y_s^N)_{i,j}+E_{j,i}(Y_s^N)_{j,i}\right)\right)\right| \leq 2\sqrt{\ts_N(R^*R)} \frac{\left| (Y_s^N)_{j,i} \right|}{\sqrt{N}},
    \end{equation}
    thus we have that for any $u\in [0,1]$,
	\begin{equation}
		|M_N^{u,i,j,s} - M_N| \leq \frac{y \left| (Y_s^N)_{j,i} \right|}{\sqrt{N}} P\left(\left| (Y_s^N)_{j,i} \right|\right) \left|\ts_N(C_1(Z^N))\dots \ts_N(C_{l'}(Z^N))\right|^{1/c},
	\end{equation}
	for some polynomials $P,C_1,\dots,C_{l'}$ and a constant $c$. Hence, one can bound \eqref{kjfd} by a linear combination (whose coefficients are once again independent of $i,j$) of terms of the form, for $h\in[0,2n-m]$,
	\begin{align*}
		\frac{y^{4+h}}{N^{2+(h-r)/2+m}} \E\Bigg[ \left| (Y_s^N)_{i,j} \right|^{r+h} |M_N|^{2n-m-h} & \left| \ts_N(B_1(\widetilde{Z}^N))\dots \ts_N(B_l(\widetilde{Z}^N))\right|^{1/8} \\
        &\times P\left(\left| (Y_s^N)_{j,i} \right|\right) \left|\ts_N(C_1(Z^N))\dots \ts_N(C_{l'}(Z^N))\right|^{1/c} \Bigg] .
	\end{align*}
    Next, by using Equation \eqref{epsi} again, one can find polynomials $P,D_1,\dots,D_{l''}$ and a constant $c'$ such that the quantity above is bounded by
    \begin{align*}
		\frac{y^{4+h}}{N^{2+h+m}} \E\Bigg[ \left| \sqrt{N} (Y_s^N)_{i,j} \right|^{r+h} |M_N|^{2n-m-h} & Q\left(\left| (Y_s^N)_{j,i} \right|\right) \left|\ts_N(D_1(Z^N))\dots \ts_N(D_{l''}(Z^N))\right|^{1/c'} \Bigg].
	\end{align*}
    We can then use Lemma \ref{podvfldmvd} with $$\alpha=y^4/N^{1-\varepsilon},\  g=m+h,\ \mathcal{X}=N^{-\frac{g}{2n-g}\varepsilon} |M_N|, $$
    $$ \mathcal{Y}=\left| \sqrt{N} (Y_s^N)_{i,j} \right|^{r+h} Q\left(\left| (Y_s^N)_{j,i} \right|\right) \left|\ts_N(D_1(Z^N))\dots \ts_N(D_{l''}(Z^N))\right|^{1/c'}.$$
    Then thanks to the moment assumption in~\eqref{assump} and Lemma \ref{docsocdssdc}, we can once again upper bound Equation \eqref{kjfd} by
    \begin{equation*}
        \frac{1}{N^2}\left(N^{-\varepsilon} \E\left[ |M_N|^{2n} \right] + C_{n}\left(\frac{y^4}{N^{1-\varepsilon}}\right)^{2n}\right)
    \end{equation*}
    for some constant $C_n$. Consequently, we have for some other constant $C_n$, that
    \begin{align}
		\label{guewig1lscd1}
        \sum_{1\leq i,j\leq N} R^{N,n}_{i,j} \leq C_n\left( N^{-\varepsilon} \E\left[|M_N|^{2n}\right] + \left(\frac{y^4}{N^{1-\varepsilon}}\right)^{2n} \right).
	\end{align}
    Thus by combining Equations \eqref{guewig11}, \eqref{soimcoskmocsmc} and \eqref{guewig1lscd1}, we get that for some constant $C_n$
    \begin{align}
		\label{guewig1domcs1}
        &\left| \E\left[ \left( e^{t/2} \ts_N\left( D_sQ(X_t^N) Y_s^N\right) - \ts_N^{\otimes 2}\left( \partial_sD_sQ(X_t^N) \right)\right) \times  M_N^{n-1} \overline{M_N}^{n} \right] \right|\\
		&\leq C_n\left( N^{-\varepsilon} \E\left[|M_N|^{2n}\right] + \left(\frac{y^4}{N^{1-\varepsilon}}\right)^{2n} \right).  \nonumber
	\end{align}
    Hence, by plugging this result in Equation \eqref{sdpdso}, for $N$ sufficiently large, 
    \begin{align*}
        \E\left[ |M_N|^{2n} \right] \leq \frac{C_n}{1-\frac{d}{2}C_n N^{-\varepsilon}} \left(\frac{y^4}{N^{1-\varepsilon}}\right)^{2n} \leq 2C_n \left(\frac{y^4}{N^{1-\varepsilon}}\right)^{2n}.
	\end{align*}
    With the above moment estimate at hand, we can apply Markov's inequality: for any $\delta>0$, we have for any $n$, for $N$ large enough,
	\begin{align*}
		&\P\left( \left| \ts_N\left(Q(Y^N,A^N)\right) - \E[\ts_N\left(Q(X^N,A^N)\right)] \right| \geq \delta \right) \leq \E[|M_N|^{2n}] \delta^{-2n} \leq 2C_{n} \left(\frac{y^4}{N^{1-\varepsilon}}\right)^{2n} \delta^{-2n}.
	\end{align*}
    Hence, we choose $\delta= y^4N^{2\varepsilon-1}$, and we have that for any $n$, for $N$ large enough,
	\begin{align*}
		&\P\left( \left| \ts_N\left(Q(Y^N,A^N)\right) - \E[\ts_N\left(Q(X^N,A^N)\right)] \right| \geq \left(\frac{y^4}{N^{1-2\varepsilon}} \right)\right) \leq 2C_{n} N^{-2\varepsilon n}.
	\end{align*}
	Consequently, for any $\varepsilon>0$, we have with high probability that
	$$ \ts_N\left(Q(Y^N,A^N)\right) = \E[\ts_N\left(Q(X^N,A^N)\right)] + \mathcal{O}\left(\frac{y^{4}}{N^{1-\varepsilon}}\right),$$
	which proves Equation $\eqref{lkeshfs}$. \\
	
	\textbf{Step 5 (The case of the GUE and the GOE):} In the case where $Y_s^N$ is a GOE random matrix, then by using Proposition \ref{3SD} (with a slight modification to take into account for the term $ M_N^{n-1} \overline{M_N}^{n}$) one has that
	\begin{align}
		\label{oidsoes}
		&e^{t/2}\E\left[\ts_N\left( D_sQ(X_t^N) Y_s^N\right) M_N^{n-1} \overline{M_N}^{n} \right] \nonumber \\
		&= \frac{1}{N}\E\left[ \ts_N^{\otimes 2}\left( \partial_s D_sQ(X_t^N) \right) M_N^{n-1} \overline{M_N}^{n} \right] \nonumber\\
		&\quad+ \frac{1}{N^2} \left(n-1\right) \E\left[ \ts_N\left( D_sQ(X_t^N) D_s{Q}(X_0^N) \right) M_N^{n-2} \overline{M_N}^{n} \right] \nonumber\\
		&\quad+ \frac{n}{2N^2} \E\left[ \ts_N\left( D_sQ(X_t^N) \left(D_sQ(X_0^N)\right)^* \right) M_N^{n-1} \overline{M_N}^{n-1} \right] \\ \linebreak
		&\quad+\frac{1}{N}\E\left[ \ts_N\left( h\left(\partial_s D_sQ(X_t^N)\right) \right) M_N^{n-1} \overline{M_N}^{n} \right] \nonumber\\
		&\quad+ \frac{1}{N^2} \left(\frac{n}{2}-1\right) \E\left[ \ts_N\left(\left( D_sQ(X_t^N)\right)^T D_s{Q}(X_0^N) \right) M_N^{n-2} \overline{M_N}^{n} \right] \nonumber\\
		&\quad+ \frac{n}{2N^2} \E\left[ \ts_N\left( \left(D_sQ(X_t^N)\right)^T \left(D_sQ(X_0^N)\right)^* \right) M_N^{n-1} \overline{M_N}^{n-1} \right], \nonumber
	\end{align}
	where $A^T$ is the transpose of $A$, and $h$ is the linear map such that $h(A\otimes B) = A^TB$. Thus, similarly to Equation \eqref{guewig11}, we get that 
    \begin{align*}
        &\left| \E\left[ \left( e^{t/2} \ts_N\left( D_sQ(X_t^N) Y_s^N\right) - \ts_N^{\otimes 2}\left( \partial_sD_sQ(X_t^N) \right)\right) \times M_N^{n-1} \overline{M_N}^{n} \right] \right|\\
		&\leq N^{-\varepsilon} \E\left[|M_N|^{2n}\right] + C_n \left(\frac{y^2}{N^{1-\varepsilon}}\right)^{2n}. 
	\end{align*}
	Hence by reinjecting this result in Equation \eqref{sdpdso}, we prove that $\E[|M_N|^n] = \mathcal{O}((y^{2}N^{\varepsilon-1})^{2n})$ for any $n$. 
 
 The case where $Y_s^N$ is a GUE random matrix is even simpler. Indeed, we compute that
	\begin{align*}
		&e^{t/2}\E\left[\ts_N\left( D_sQ(X_t^N) Y_s^N\right) M_N^{n-1} \overline{M_N}^{n} \right] \nonumber \\
		&= \frac{1}{N}\E\left[ \ts_N^{\otimes 2}\left( \partial_s D_sQ(X_t^N) \right) M_N^{n-1} \overline{M_N}^{n} \right] \nonumber\\
		&\quad+ \frac{1}{N^2} \left(\frac{n}{2}-1\right) \E\left[ \ts_N\left( D_sQ(X_t^N) D_s{Q}(X_0^N) \right) M_N^{n-2} \overline{M_N}^{n} \right] \nonumber\\
		&\quad+ \frac{n}{2N^2} \E\left[ \ts_N\left( D_sQ(X_t^N) \left(D_sQ(X_0^N)\right)^* \right) M_N^{n-1} \overline{M_N}^{n-1} \right],
	\end{align*}
	and the result follows.
\end{proof}

\begin{rem}
\label{sckmsssscd}
	We suspect that it should be possible to improve the error term $N^\epsilon \frac{y^4}{N}$ in~\eqref{lkeshfs} for Wigner matrices to $N^\epsilon \frac{y^2}{N}$, as is the case for GOE and GUE matrices. The weaker bound in the general case comes from certain inequalities used to bound contributions from the second and third order terms in the cumulant expansions that cannot be easily improved for general polynomials, e.g.\eqref{optieked} or \eqref{kjfd}, as one does not expect cancellations from higher order terms in the cumulant expansion.
\end{rem}

\subsection{Concentration of the scalar product}

Next we prove a similar concentration result for the scalar product. The main difference is that instead of having a concentration of order $N^{-1}$, it is only of order $N^{-1/2}$. Note that this speed of convergence cannot be improved by pushing the cumulant expansion further since it comes from the first error term (see Equation \eqref{guewigner3} to \eqref{guewigner7}). Besides it is in line with results obtained by proving local, see for example Theorem 2.6 of \cite{benanti}. The main difference between local laws and Proposition \ref{concentrcoeff} comes from the term $y^4$ which as we discussed in Remark \ref{sckmsssscd} we suspect is not optimal. Before giving the concentration estimate we prove the following lemma which we will need.

\begin{lemma}
\label{docsocscddssdc}
Given
\begin{itemize}
		\item $Y^N$ a $d$-tuple of independent Wigner matrices as in Definition \ref{defwigner}, 
		\item $A^N$ deterministic matrices such that $\sup_{1\leq i\leq q,N\in\N^*} \norm{A_i^N} <\infty$,
        \item $Q_1,\dots,Q_l\in\F_{d,q}$,
        \item $c\in\R^+$,
\end{itemize}
then for all $\gamma>0$,
\begin{equation}
    \sup_{N\in\N^*} \frac{1}{N^{\gamma}} \E\left[ \left( \norm{Q_1(Y^N,A^N)} \dots\norm{Q_l(Y^N,A^N)}\right)^c \right] <\infty.
\end{equation}
Besides if one writes each $Q_j$ as a linear combination of terms of the form $e^{\i P_1}R_1\cdots e^{\i P_k}R_k$, then the upper bound on the equation above does not depend on the (self-adjoint) polynomials $P_1,\dots,P_k$.
\end{lemma}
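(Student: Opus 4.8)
The plan is to deduce the claim from Lemma~\ref{docsocdssdc} by comparing the operator norm of a matrix with one of its Schatten norms. For any $M\in\M_N(\C)$ and any integer $p\geq 1$ one has $\norm{M}^{2p}\leq \tr_N\big(|M|^{2p}\big)$, since $\tr_N(|M|^{2p})$ is the sum of the $2p$-th powers of the singular values of $M$, while $\norm{M}$ is the largest of them. As $\tr_N=N\ts_N$ and $|M|^{2p}=(M^*M)^p$, this reads $\norm{M}\leq N^{1/(2p)}\big(\ts_N((M^*M)^p)\big)^{1/(2p)}$. Applying this to each $M=Q_j(Y^N,A^N)$ and using that evaluation is an algebra $*$-homomorphism, so that $\ts_N((M^*M)^p)=\ts_N\big((Q_j^*Q_j)^p(Y^N,A^N)\big)$ with $(Q_j^*Q_j)^p\in\F_{d,q}$ positive, we obtain
$$\Big(\norm{Q_1(Y^N,A^N)}\cdots\norm{Q_l(Y^N,A^N)}\Big)^c\leq N^{cl/(2p)}\prod_{j=1}^l \Big(\ts_N\big((Q_j^*Q_j)^p(Y^N,A^N)\big)\Big)^{c/(2p)}.$$

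First I would take expectations and apply Hölder's inequality to decouple the $l$ factors, bounding the right-hand side by $N^{cl/(2p)}\prod_{j=1}^l \E\big[\big(\ts_N((Q_j^*Q_j)^p(Y^N,A^N))\big)^{cl/(2p)}\big]^{1/l}$. Since each $(Q_j^*Q_j)^p$ belongs to $\F_{d,q}$, Lemma~\ref{docsocdssdc} applied with exponent $cl/(2p)$ in place of $c$ produces a constant $C_p$, independent of $N$, bounding each of these expectations. For the uniformity assertion, note that if $Q_j$ is a linear combination of words $e^{\i P_1}R_1\cdots e^{\i P_k}R_k$, then using $e^{-\i P}=e^{\i(-P)}$ with $-P$ self-adjoint and $R^*\in\A_{d,q}$, the adjoint $Q_j^*$, hence $Q_j^*Q_j$ and each power $(Q_j^*Q_j)^p$, are again linear combinations of words of that form; thus the final sentence of Lemma~\ref{docsocdssdc} guarantees that $C_p$ does not depend on the polynomials $P_1,\dots,P_k$.

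Finally, given $\gamma>0$, I would choose an integer $p\geq cl/(2\gamma)$, so that $cl/(2p)\leq\gamma$ and therefore $N^{-\gamma}N^{cl/(2p)}\leq 1$ for every $N\geq 1$. Combining the two displays above with the bound from Lemma~\ref{docsocdssdc} yields $\sup_N N^{-\gamma}\E\big[(\norm{Q_1(Y^N,A^N)}\cdots\norm{Q_l(Y^N,A^N)})^c\big]\leq C_p$, with $C_p$ independent of the $P_i$, which is the claim. There is no real obstacle here: the only points requiring care are the algebraic bookkeeping showing that $(Q_j^*Q_j)^p$ stays within the class of linear combinations of words $e^{\i P_1}R_1\cdots e^{\i P_k}R_k$, so that the uniformity in the $P_i$ from Lemma~\ref{docsocdssdc} transfers, and the observation that the factor $N^{1/(2p)}$ lost in the norm-to-Schatten-norm comparison can be absorbed into $N^\gamma$ only by letting $p\to\infty$, which is exactly why the statement holds for an arbitrary $\gamma>0$ but not for $\gamma=0$.
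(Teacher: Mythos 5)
Your proposal is correct and follows essentially the same route as the paper: compare the operator norm of $Q_j(Y^N,A^N)$ with the $2p$-th Schatten norm to pick up a factor $N^{1/(2p)}$, reduce to Lemma~\ref{docsocdssdc}, and choose $p$ large enough that $N^{cl/(2p)}\le N^\gamma$. Two small remarks: the extra Hölder step to decouple the $l$ factors is unnecessary since Lemma~\ref{docsocdssdc} already handles products of traces directly; and your careful tracking of the exponent $c$ (giving $N^{cl/(2p)}$ rather than $N^{l/(2k)}$ as written in the paper) is the more accurate bookkeeping, though both lead to the same conclusion.
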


\begin{proof}
For any $k$ and $j$, one has that
\begin{align*}
    \norm{Q_j(Y^N,A^N)} &= \norm{\left(Q_j(Y^N,A^N)^*Q_j(Y^N,A^N)\right)^k}^{\frac{1}{2k}} \\
    &\leq \left(\tr_N\left(\left(Q_j(Y^N,A^N)^*Q_j(Y^N,A^N)\right)^k\right)\right)^{\frac{1}{2k}} \\
    &= N^{\frac{1}{2k}} \left(\ts_N\left(\left(Q_j(Y^N,A^N)^*Q_j(Y^N,A^N)\right)^k\right)\right)^{\frac{1}{2k}}.
\end{align*}
Thus thanks to Lemma \ref{docsocdssdc}, one has that for any $k$, there exists a constant $C_k$ such that for any $N$,
$$ \E\left[ \left( \norm{Q_1(Y^N,A^N)} \dots\norm{Q_l(Y^N,A^N)}\right)^c \right] \leq C_k N^{\frac{l}{2k}}.$$
Hence the conclusion by picking $k$ such that $\frac{l}{2k}\leq \gamma$.
\end{proof}

We can now state our concentration estimate.

\begin{prop}
	
	\label{concentrcoeff}
	
	Given 
	\begin{itemize}
		\item $Y^N$ a $d$-tuple of independent Wigner matrices as in Definition \ref{defwigner}, 
		\item $X^N$ a $d$-tuple of independent GUE random matrices, independent from $Y^N$,
		\item $A^N$ deterministic matrices such that $\sup_{1\leq i\leq q,N\in\N^*} \norm{A_i^N} <\infty$.
	\end{itemize}
	Let $\x,\y\in\C^N$, $y_i\in\R$, $y = 1 + \max_i |y_i|$, $P_1,\dots,P_k,R_1,\dots,R_k \in\A_{d,q}$ non-commutative polynomials. If we assume that $P_1,\dots,P_k$ are self-adjoint, then with  $Q= e^{\i P_1 y_1}R_1\cdots e^{\i P_k y_k}R_k$, we have for any $\varepsilon>0$ that with high probability, 
	\begin{align}\label{scalar concentration estimate}
		&\left\langle \x, Q(Y^N,A^N) \y \right\rangle = \E\left[ \left\langle \x, Q(X^N,A^N) \y \right\rangle \right] + \mathcal O\left( N^{\varepsilon} \frac{y^4}{\sqrt{N}} \norm{\x}_2\norm{\y}_2\right) .
	\end{align}
	Besides if $Y^N$ a $d$-tuple of independent GOE or GUE matrices, then one can replace $y^4$ by $y^2$ in the previous equality.
	
\end{prop}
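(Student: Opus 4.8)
The plan is to run the proof of Proposition~\ref{concentrtrace} essentially line by line, replacing the normalized trace $\ts_N(\,\cdot\,)$ everywhere by the functional $\langle\x,\,\cdot\,\y\rangle$. Thus we set
$$ M_N:=\big\langle\x,\big(Q(Y^N,A^N)-\E[Q(X^N,A^N)]\big)\y\big\rangle, $$
use the same interpolation $X_t^N=\big(Y^Ne^{-t/2}+X^N(1-e^{-t})^{1/2},A^N\big)$, and study $t\mapsto\E\big[\langle\x,Q(X_t^N)\y\rangle\,M_N^{n-1}\overline{M_N}^{\,n}\big]$. Since $\langle\x,\,\cdot\,\y\rangle$ is not tracial, cyclicity is unavailable: in place of the cyclic derivative $D_sQ$ one keeps the full noncommutative derivative $\partial_sQ$ as a tensor and contracts it against matrix units, $\langle\x,(\partial_sQ(X_t^N)\#E_{i,j})\y\rangle$, the exponentials being handled through Definition~\ref{3technicality} and formula~\eqref{3ext} exactly as before. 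Crucially $M_N$ still depends only on $Y^N$ and on the deterministic quantity $\E[Q(X^N,A^N)]$, so that differentiating in $t$ and applying the Schwinger-Dyson equations for the GUE part (Proposition~\ref{skscksd}) produces a term involving a partial normalized trace which, summed over $i,j$, exactly cancels the leading cumulant term of the Wigner expansion (the analogue of~\eqref{guewig2}); after the boundary evaluations at $t=0$ and $t\to\infty$ one is left, just as in~\eqref{sdpdso}, with an integral identity expressing $\E[|M_N|^{2n}]$ as an integral over $t$ of higher-order cumulant error terms of the three types treated in Steps~2--4 there.

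The one genuinely new input is how the sum over matrix entries is estimated. Writing $\partial_sQ=\sum_l A_l\otimes B_l$ one has $\langle\x,(A_l\otimes B_l\#E_{i,j})\y\rangle=\langle\x,A_lE_{i,j}B_l\y\rangle=\overline{(A_l^*\x)_i}\,(B_l\y)_j$, so that, by the triangle inequality and then Cauchy-Schwarz applied separately in $i$ and in $j$,
$$ \sum_{1\le i,j\le N}\big|\langle\x,(\partial_sQ(X_t^N)\#E_{i,j})\y\rangle\big|\,\big|\langle\x,(\partial_sQ(X_0^N)\#E_{j,i})\y\rangle\big|\le\|\x\|_2^2\,\|\y\|_2^2\sum_{l,m}\|A_l\|\,\|B_l\|\,\|A_m\|\,\|B_m\|, $$
and similarly for the sums over triples of units produced by the second- and third-order cumulant terms. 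This replaces inequalities like $\sum_{i,j}|A_{i,j}B_{i,j}|\le N\sqrt{\ts_N(AA^*)\ts_N(BB^*)}$ used in the trace case. Two features matter. First, the right-hand side now carries operator norms of random words in $X_t^N$ and the exponentials, which are not uniformly bounded but have moments bounded by $N^\gamma$ for every $\gamma>0$: this is precisely the statement of Lemma~\ref{docsocscddssdc}, which is therefore used here in place of the normalized-trace bound Lemma~\ref{docsocdssdc}. Second, in the trace case each factor $\ts_N(\,\cdot\,E_{i,j})=\tfrac1N(\,\cdot\,)_{j,i}$ contributed an extra $N^{-1}$, whereas the contraction $\langle\x,(\,\cdot\,)E_{i,j}(\,\cdot\,)\y\rangle$ carries no such decay; summing $N^2$ entries weighted by the variance $\E[|(Y_s^N)_{i,j}|^2]=1/N$ then leaves a surplus factor $N$, which is exactly why the concentration scale degrades from $N^{-1}$ to $N^{-1/2}$.

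With these substitutions, Steps~2--4 of the proof of Proposition~\ref{concentrtrace} go through with no new idea: one bounds the first-, second- and third-order cumulant error terms, using there the operator-norm moment bound Lemma~\ref{docsocscddssdc} in place of Lemma~\ref{docsocdssdc}, the $\langle\x,\,\cdot\,\y\rangle$-analogue of the bound on $M_N^{u,i,j,s}-M_N$ (which now carries a factor $\|\x\|_2\|\y\|_2$), and then Young's inequality (Lemma~\ref{podvfldmvd}) to absorb the powers of $|M_N|$; the scale parameter $\alpha$ is the one of the trace case multiplied by $N^{1/2}$, and the factors $\|\x\|_2$, $\|\y\|_2$ are carried along exactly as the norms $\norm{A_i^N}$ are. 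This yields $\E[|M_N|^{2n}]\le C_n\big(N^{\varepsilon}\,y^4\,N^{-1/2}\,\|\x\|_2\|\y\|_2\big)^{2n}$ for every $n$, and Markov's inequality then gives~\eqref{scalar concentration estimate} with high probability. For the GOE/GUE case one instead feeds the exact Schwinger-Dyson identities (Propositions~\ref{3SD} and~\ref{skscksd}) back into the integral identity, thereby skipping the second- and third-order cumulant error terms entirely and leaving only $y^2$-type contributions, which gives the stated improvement to $y^2$. The main obstacle is purely organizational: one must check that the Schwinger-Dyson compensation still exactly matches the leading cumulant term after the extra $\x,\y$ contractions, and then track the interplay of the powers of $N$, $y$, $\|\x\|_2$, $\|\y\|_2$ and $|M_N|$ through the Young step so that every error term lands at the target scale $N^{\varepsilon}y^4N^{-1/2}\|\x\|_2\|\y\|_2$; given the matching structure of the trace-case proof and of Lemma~\ref{docsocscddssdc}, this is routine.
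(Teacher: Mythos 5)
Your proposal is correct and follows essentially the same strategy as the paper: the paper rewrites $\langle\x,\cdot\,\y\rangle$ as $\tr_N(\cdot\, P)$ with $P=\y\x^*$, runs the same long-time interpolation and cumulant expansion, replaces $D_sQ$ by $\partial_sQ\widetilde{\#}P$, exploits the rank-one structure of $P$ in the entrywise sums to get operator-norm bounds (matching your Cauchy--Schwarz computation), and invokes Lemma~\ref{docsocscddssdc} in place of Lemma~\ref{docsocdssdc}. The only small divergence is that you propose carrying the cumulant expansion to third order as in Proposition~\ref{concentrtrace} (Steps~2--4), whereas the paper stops at $\ell=1$ (second-order remainder only), which already produces the target scale $N^\varepsilon y^4 N^{-1/2}\norm{\x}_2\norm{\y}_2$; pushing further is harmless but unnecessary since the $N^{-1/2}$ barrier comes from the variance terms. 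You should also make explicit the WLOG reduction $y^4\le\sqrt{N}$ (handled via Lemma~\ref{docsocscddssdc} and the deterministic bound $|M_N|\le(\norm{Q(Y^N,A^N)}+\E\norm{Q(X^N,A^N)})\norm{\x}_2\norm{\y}_2$), which the paper uses to control some remainder terms, but this is a routine detail.
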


Similarly to the proof of Lemma \ref{concentrtrace}, we divide the following proof it in four steps. The first one establishes an equation on the moment of $M_N$ (defined in Equation \eqref{slkmclskcs}). Then in each of the following two steps we bound a specific error term. In the last step, we study the case of the GUE and the GOE.

\begin{proof}[Proof of Proposition~\ref{concentrcoeff}]
	\textbf{Step 1 (Using the cumulant expansion):} First one has that for any matrix $R\in\M_N(\C)$, $\langle \x,R\y\rangle = \tr_N(R \y\x^*)$. Consequently, with $P=\y\x^*$, as previously we set
    \begin{equation}
        \label{slkmclskcs}
        M_N := \tr_{N}\Big(\big(Q(Y^N,A^N) - \E[Q(X^N,A^N)]\big)P\Big),
    \end{equation}
	and we want to study the moments of $M_N$ in order to use Markov's inequality. We want to prove that for any $n\in\N,\varepsilon>0$ there exists a constant $C_{n}$ such that for $N$ large enough
	\begin{equation}
		\label{butdelaproof}
		\E[|M_N|^{2n}] \leq C_{n} \left(\frac{y^4}{N^{1/2-\varepsilon}} \norm{P}\right)^{2n}.
	\end{equation}
	One can always assume that $y^4\leq \sqrt{N}$, otherwise, since 
	$$|M_N|\leq \left(\norm{Q(Y^N,A^N)} + \E\left[\norm{Q(X^N,A^N)}\right]\right)\norm{P},$$
	we immediately get Equation \eqref{butdelaproof} thanks Lemma \ref{docsocscddssdc}. First just like in Equation \eqref{sdpdso}, we have that for $n\in \N^*$,
	\begin{align}
		\label{sdpdso2}
		\E\left[ |M_N|^{2n} \right] = \frac{1}{2} \sum_{1\leq s\leq d} \int_0^{\infty} e^{-t} \E\Big[ \Big(& e^{t/2} \tr_N\left( \partial_sQ(X_t^N)\widetilde{\#}P \times Y_s^N\right) \\
		&- N^{-1}\tr_N^{\otimes 2}\left( \partial_s\big(\partial_sQ(X_t^N)\widetilde{\#}P\big) \Big)\Big) \times  M_N^{n-1} \overline{M_N}^{n} \right] dt. \nonumber
	\end{align}
	Then we want to use the cumulant expansion with $u=\Re((Y_s^N)_{i,j})$ and $v=\Im((Y_s^N)_{i,j})$, and 
	$$\Phi_{i,j}: \left(\Re((Y_s^N)_{i,j}),\Im((Y_s^N)_{i,j})\right) \longmapsto e^{t/2} \tr_N\left( \partial_sQ(X_t^N)\widetilde{\#}P \times E_{i,j}\right)  M_N^{n-1} \overline{M_N}^{n}. $$ 
	In particular if $i\neq j$, we have for the first derivatives of $\Phi$ that
	\begin{align}
		\label{idxud3}
		\partial_u \Phi_{i,j}(u,v) =\ &\tr_N\left( \left(\partial_s \left(\partial_sQ(X_t^N)\widetilde{\#}P\right)\# (E_{i,j}+E_{j,i})\right) E_{i,j} \right) M_N^{n-1} \overline{M_N}^{n} \\
		&+ \left(\frac{n}{2}-1\right) \tr_N\left( \left(\partial_sQ(X_t^N)\widetilde{\#}P\right) E_{i,j} \right) \ts_N\left( \left(\partial_sQ(X_0^N)\widetilde{\#}P\right) (E_{i,j}+E_{j,i}) \right) M_N^{n-2} \overline{M_N}^{n} \nonumber\\
		&+ \frac{n}{2} \tr_N\left( \left(\partial_sQ(X_t^N)\widetilde{\#}P\right) E_{i,j} \right) \ts_N\left( \left(\partial_sQ(X_0^N)\widetilde{\#}P\right)^* (E_{i,j}+E_{j,i}) \right) M_N^{n-1} \overline{M_N}^{n-1}, \nonumber
	\end{align}
	and
	\begin{align}
		\label{idxud4}
		\partial_v \Phi_{i,j}(u,v) =\ &\i \tr_N\left( \left(\partial_s \left(\partial_sQ(X_t^N)\widetilde{\#}P\right)\# (E_{i,j}-E_{j,i})\right) E_{i,j} \right) M_N^{n-1} \overline{M_N}^{n} \\
		&+\i \left(\frac{n}{2}-1\right) \tr_N\left( \left(\partial_sQ(X_t^N)\widetilde{\#}P\right) E_{i,j} \right) \ts_N\left( \left(\partial_sQ(X_0^N)\widetilde{\#}P\right) (E_{i,j}-E_{j,i}) \right) M_N^{n-2} \overline{M_N}^{n} \nonumber \\
		&+\i \frac{n}{2} \tr_N\left( \left(\partial_sQ(X_t^N)\widetilde{\#}P\right) E_{i,j} \right) \ts_N\left( \left(\partial_sQ(X_0^N)\widetilde{\#}P\right)^* (E_{i,j}-E_{j,i}) \right) M_N^{n-1} \overline{M_N}^{n-1}. \nonumber
	\end{align}
	Moreover if $i=j$, then $\partial_u \Phi_{i,j}(u,v)$ is defined similarly but with $E_{i,i}$ instead of $E_{i,j}+E_{j,i}$, and $\partial_v \Phi_{i,j}(u,v)=0$. By using again Equations \eqref{eqcumul} and Proposition \ref{cuulantmexpansion} with $\ell=1$, we get that
\begin{subequations}	\label{guewigner full set}
 \begin{align}
		&e^{t/2}\E\left[ (Y_s^N)_{i,j} \times \tr_N\left( \partial_s(Q(X_t^N)\widetilde{\#}P) E_{i,j} \right) M_N^{n-1} \overline{M_N}^{n} \right] 
		\\ \label{guewigner2}
		&= \frac{1}{N}\E\left[ \tr_N\left( \left(\partial_s \left(\partial_sQ(X_t^N)\widetilde{\#}P\right)\# E_{j,i}\right) E_{i,j} \right) M_N^{n-1} \overline{M_N}^{n} \right] 
	\\	\label{guewigner3}
		&\quad+ \frac{1}{N} \left(\frac{n}{2}-1\right) \E\left[ \tr_N\left( (\partial_sQ(X_t^N)\widetilde{\#}P) E_{i,j} \right) \tr_N\left( (\partial_sQ(X_0^N)\widetilde{\#}P) E_{j,i} \right) M_N^{n-2} \overline{M_N}^{n} \right] 
		\\ \label{guewigner4}
		&\quad+ \frac{n}{2N} \E\left[ \tr_N\left( (\partial_sQ(X_t^N)\widetilde{\#}P) E_{i,j} \right) \tr_N\left( (\partial_sQ^*(X_0^N)\widetilde{\#}P) E_{j,i} \right) M_N^{n-1} \overline{M_N}^{n-1} \right] \\ \label{guewigner5}
		&\quad+ \left( \E[(Y_s^N)_{i,j}^2] -\frac{\1_{i=j}}{N} \right) \E\left[ \ts_N\left( (\partial_s (\partial_sQ(X_t^N)\widetilde{\#}P)\# E_{i,j}) E_{i,j} \right) M_N^{n-1} \overline{M_N}^{n} \right] 
		\\ \label{guewigner6}
		&\quad+ \left(\frac{n}{2}-1\right)\left( \E[(Y_s^N)_{i,j}^2] -\frac{\1_{i=j}}{N} \right) \E\left[ \ts_N\left( (\partial_sQ(X_t^N)\widetilde{\#}P) E_{i,j} \right) \ts_N\left( (\partial_sQ(X_0^N)\widetilde{\#}P) E_{i,j} \right) M_N^{n-2} \overline{M_N}^{n} \right] 
		\\ \label{guewigner7}
		&\quad+ \frac{n}{2} \left( \E[(Y_s^N)_{i,j}^2] -\frac{\1_{i=j}}{N} \right) \E\left[ \ts_N\left( (\partial_sQ(X_t^N)\widetilde{\#}P) E_{i,j} \right) \ts_N\left( (\partial_sQ(X_0^N)\widetilde{\#}P)^* E_{i,j} \right) M_N^{n-1} \overline{M_N}^{n-1} \right]
		 \\ \nonumber
		&\quad+ R^{N,n}_{i,j}, 
	\end{align}
 \end{subequations}
    where $R^{N,n}_{i,j}$ is the remainder $\varepsilon_2$, consequently this term correspond to the second order derivatives. \\
    
    \textbf{Step 2 (Bounding the first order error term):} Next one can use the following inequality. Given two matrices $A$ and $B$ of size $N$ and rank~$1$,
	\begin{equation}
		\sum_{1\leq i,j\leq N} |A_{i,j}B_{i,j}| \leq \sqrt{\sum_{1\leq i,j\leq N} |A_{i,j}|^2} \sqrt{\sum_{1\leq i,j\leq N} |B_{i,j}|^2} = \sqrt{\tr_{N}(AA^*)\tr_{N}(BB^*)} \leq \norm{A} \norm{B}.
	\end{equation}
	Hence, thanks to Equation \eqref{assump}, one can bound the terms in~\eqref{guewigner3},~\eqref{guewigner4}, \eqref{guewigner6} and \eqref{guewigner7} by 
	\begin{equation}
		\label{extraterm3}
		\frac{y^2\norm{P}^2}{N} \E\left[|M_N|^{2n-2}  \norm{B_1(Z^N)}\dots \norm{B_l(Z^N)}\right], 
	\end{equation}
	for some polynomials $B_1$ to $B_l$ and $Z^N=(X^N,Y^N,A^N)$. Similarly one can bound the term in~\eqref{guewigner5}~by
	\begin{equation}
		\label{extraterm4}
		\frac{y^2\norm{P}}{\sqrt{N}} \E\left[|M_N|^{2n-1}  \norm{B_1(Z^N)}\dots \norm{B_l(Z^N)}\right]. 
	\end{equation}

    Then by using Lemma \ref{podvfldmvd} with $\alpha=y^2/N^{1/2-\varepsilon}$, $\mathcal{X}=N^{-\frac{g}{2n-g}\varepsilon} M_N$, $\mathcal{Y}=\norm{B_1(Z^N)}\dots \norm{B_l(Z^N)}$, and $g=1$ or $2$, we get that \eqref{extraterm3} and \eqref{extraterm4} can be upper bounded by
    $$ N^{-\varepsilon} \E\left[|M_N|^{2n}\right] + \left(\frac{y^2}{N^{1/2-\varepsilon}}\right)^{2n} \E\left[ \left(\norm{B_1(Z^N)}\dots \norm{B_l(Z^N)}\right)^{\frac{2n}{g}} \right]. $$
    Thus, thanks to Lemma \ref{docsocscddssdc} which we use with $\gamma=\varepsilon$, there exists a constant $C_n$ such that the terms in Equations \eqref{extraterm3} and \eqref{extraterm4} can be upper bounded by
    \begin{equation*}
        N^{-\varepsilon} \E\left[|M_N|^{2n}\right] + C_n \left(\frac{y^2}{N^{1/2-2\varepsilon}}\right)^{2n}.
    \end{equation*}
    Thus after summing over $i,j$, Equation \eqref{guewigner full set} yields
	\begin{align}
		\label{guewigner22}
		&\left| e^{t/2}\E\left[ \tr_N\left( \partial_s(Q(X_t^N)\widetilde{\#}P) Y_s^N \right) M_N^{n-1} \overline{M_N}^{n} \right] - \frac{1}{N}\E\left[ \tr_N^{\otimes 2}\left( \partial_s \left(\partial_sQ(X_t^N)\widetilde{\#}P\right)  \right) M_N^{n-1} \overline{M_N}^{n} \right] \right|  \nonumber \\
		&\leq \sum_{1\leq i,j\leq N} R^{N,n}_{i,j} + N^{-\varepsilon} \E\left[|M_N|^{2n}\right] + C_n \left(\frac{y^2}{N^{1/2-2\varepsilon}}\right)^{2n}.
	\end{align}

	\textbf{Step 3 (Bounding the second order error term):} Since we have an explicit expression of the remainder $R_{i,j}^{N,n}$ in Proposition \ref{cuulantmexpansion}, we define $ M_N^{u,i,j,s}$ just like $M_N$ but with the entries $(i,j)$ and $(j,i)$ of the matrix $Y_s^N$ multiplied by $u$. Then we have that~$R^{N,n}_{i,j}$ is bounded by a linear combination (whose coefficients are independent of $i,j$) of terms of the form,
	\begin{equation}
		\label{kjfd2}
		\frac{y^3}{N^{3/2-r/2}} \E\left[ \left| (Y_s^N)_{i,j} \right|^{r} \int_{0}^{1} \left|(A_1^{u,i,j,s})_{e_1,e_2} \dots (A_3^{u,i,j,s})_{e_5,e_6}\right|\ |M_N^{u,i,j,s}|^{2n-m} du\right] ,
	\end{equation}
	where $r,m\in [1,3 ]$, $e_1,\dots,e_6\in \{i,j\}$, and $A_1^{u,i,j,s}$ to $A_3^{u,i,j,s}$ are matrices which are evaluation of elements in $\F_{d,q}$. Besides, thanks to $P$, $m$ of those matrices have rank $1$. Furthermore the indices $e_1,\dots,e_6$ come from the derivatives with respect to $\Re((Y_s^N)_{i,j})$ and $\Im((Y_s^N)_{i,j})$ which makes $E_{i,j}$ and $E_{j,i}$ appear, coupled with the fact that $\tr_{N}(A E_{i,j}) = A_{j,i}$. Consequently, half of the indices $e_l$ are equal to $i$ and the other half to $j$. This fact will come in handy in the next few computations. 
	
	Next, let $R\in\F_{d,q}$ be such that $A_1^{u,i,j,s} = R(\widetilde{Z}^N)$ where $\widetilde{Z}^N$ is defined like $Z^N$ but with the entries $(j,i)$ and $(i,j)$ of the matrix $Y_s^N$ multiplied by $u\in[0,1]$. Then with $\ev(A\otimes B) = A(Z^N)\otimes B(\widetilde{Z}^N)$ and $\ev(A\otimes B\otimes C) = A(Z^N)\otimes B(Z^N) \otimes B(\widetilde{Z}^N)$,
    \begin{align*}
        R(Z^N) - R(\widetilde{Z}^N) &= (1-u)\ \ev\circ\partial_sR\# \left(E_{i,j}(Y_s^N)_{i,j}+E_{j,i}(Y_s^N)_{j,i}\right) \\
        &= (1-u)\ \partial_sR(Z^N)\# \left(E_{i,j}(Y_s^N)_{i,j}+E_{j,i}(Y_s^N)_{j,i}\right) \\
        &\quad - (1-u)^2\ \ev\circ(\id\otimes\partial_s)\circ\partial_sR(Z^N)\# \Big(\left(E_{i,j}(Y_s^N)_{i,j}+E_{j,i}(Y_s^N)_{j,i}\right), \\ 
        &\quad\quad\quad\quad\quad\quad\quad\quad\quad\quad\quad\quad\quad\quad\quad\quad\quad\quad \left(E_{i,j}(Y_s^N)_{i,j}+E_{j,i}(Y_s^N)_{j,i}\right) \Big).
    \end{align*}
    Consequently, since $|1-u|\leq 1$, one can find matrices $A_1^1$, $A^2_1$, $A^3_1$, polynomials $B_1,\dots,B_{l}, C_1,\dots,C_{l'},L$ such~that
	\begin{align*}
		\left|(A_1^{t,i,j,s})_{e_1,e_2}\right| \leq \left|(A_1^{1})_{e_1,e_2}\right| &+ |y (Y_s^N)_{i,j}| \left(\left|(A_1^{2})_{e_1,\widetilde{e_2}}\right| + \left|(A_1^{3})_{\widetilde{e_1},e_2}\right|\right) \norm{B_1(Z^N)}\dots \norm{B_l(Z^N)} \\
		&+ |y (Y_s^N)_{i,j}|^2 \norm{C_1(Z^N)}\dots \norm{C_{l'}(Z^N)} L\left( \left|(Y_s^N)_{i,j}\right|\right) \norm{P}^{h_1},
	\end{align*}
	where $\widetilde{e_1}$ and $\widetilde{e_2}$ are equal to either $i$ or $j$. Besides if $A_1^{1}$ is of rank $1$, then one can pick $A_1^{2}$ and $A_1^{3}$ of rank at most $1$, and $h_1=1$. In every other case $h_1=0$. One can also find polynomials $D_1,\dots,D_{l},L'$ such that 
	\begin{align*}
		\left|M_N^{t,i,j,s} - M_N\right| \leq |y (Y_s^N)_{i,j}| \norm{D_1(Z^N)}\dots \norm{D_l(Z^N)} L'\left( \left|(Y_s^N)_{i,j}\right|\right) \norm{P},
	\end{align*}
	which means that
	$$ |M_N^{t,i,j,s}|^{2n-m}  \leq \sum_{i=1}^{2n-m} \binom{2n-m}{p} |y (Y_s^N)_{i,j}|^p \norm{D_1(Z^N)}^p\dots \norm{D_l(Z^N)}^p L'\left( \left|(Y_s^N)_{i,j}\right|\right)^p \norm{P}^p |M_N|^{2n-m-p}.$$
	
	\noindent Thus, one can bound \eqref{kjfd2} by a linear combination (whose coefficients are independent of $i,j$) of terms of the form 
	\begin{align}
		\label{kjfd3}
		\frac{y^{3+p}\norm{P}^p}{N^{(3+p)/2}} \E\Bigg[ &\left(\left|(A_1^{1})_{e_1,e_2}\right| + \frac{y}{\sqrt{N}} \left(\left|(A_1^{2})_{e_1,\widetilde{e_2}}\right| + \left|(A_1^{3})_{\widetilde{e_1},e_2}\right|\right) + \frac{y^2\norm{P}^{h_1}}{N}\right) \\
		&\times \left(\left|(A_2^{1})_{e_3,e_4}\right| + \frac{y}{\sqrt{N}} \left(\left|(A_2^{2})_{e_3,\widetilde{e_4}}\right| + \left|(A_2^{3})_{\widetilde{e_3},e_4}\right|\right) + \frac{y^2\norm{P}^{h_2}}{N}\right) \nonumber \\
		&\times \left(\left|(A_3^{1})_{e_5,e_6}\right| + \frac{y}{\sqrt{N}} \left(\left|(A_3^{2})_{e_5,\widetilde{e_6}}\right| + \left|(A_3^{3})_{\widetilde{e_5},e_6}\right|\right) + \frac{y^2\norm{P}^{h_3}}{N}\right) \nonumber \\
        &\quad\quad\times \norm{H_1(Z^N)}\dots \norm{H_l(Z^N)} L\left( \sqrt{N}\left|(Y_s^N)_{i,j}\right|\right) |M_N|^{2n-m-p} \Bigg], \nonumber
	\end{align}
	
	\noindent for some polynomials $H_1,\dots,H_l,L$. Since the non-normalized trace of a rank $1$ matrix is smaller than its norm, given $A,B,C$ matrices of size $N$, assuming that $g$ of them have rank $1$, one has that
	\begin{align}
	\label{order11}
		\sum_{1\leq i,j\leq N} |A_{i,j} B_{i,j} C_{i,j}| &\leq \sqrt{\sum_{1\leq i,j\leq N} |A_{i,j} B_{i,j}|^2} \sqrt{\sum_{1\leq i,j\leq N} |C_{i,j}|^2}  \\
		&\leq \sqrt{\tr_N(C^*C) } \times\max\{\norm{B} \sqrt{\tr_N(A^*A)}, \norm{A} \sqrt{\tr_N(B^*B)}\} \nonumber \\
		&\leq N^{(1-g/2)\wedge 0} \norm{A} \norm{B} \norm{C}, \nonumber
	\end{align}
	\noindent and
	\begin{align}
	\nonumber
		\sum_{1\leq i,j\leq N} |A_{i,i} B_{j,j} C_{i,j}| &\leq \sqrt{\sum_{1\leq i,j\leq N} |A_{i,i} B_{j,j}|^2} \sqrt{\sum_{1\leq i,j\leq N} |C_{i,j}|^2}  \\
		&\leq \sqrt{\tr_N(A^*A) \tr_N(B^*B) \tr_N(C^*C) } \label{order12} \\
		&\leq N^{(3-g)/2} \norm{A} \norm{B} \norm{C}. \nonumber
	\end{align}	

	\noindent Similarly, we have that
	\begin{align}
		\label{order2}
		&\sum_{1\leq i,j\leq N} |A_{i,j} B_{i,j} C_{i,i}| \leq N^{(3-g)/2}  \norm{A} \norm{B} \norm{C},  \\
		&\sum_{1\leq i,j\leq N} |A_{j,j} B_{i,i} C_{i,i}| \leq N^{2-g/2}  \norm{A} \norm{B} \norm{C}, \nonumber \\
		&\sum_{1\leq i,j\leq N} |A_{j,i} B_{i,i} C_{i,i}| \leq N^{2-g/2}  \norm{A} \norm{B} \norm{C}, \nonumber \\
		&\sum_{1\leq i,j\leq N} |A_{i,i} B_{i,i} C_{i,i}| \leq N^{2-(g\wedge 2)/2}  \norm{A} \norm{B} \norm{C}. \nonumber
	\end{align}

	\noindent If we only have two matrices, $g$ of them being of rank $1$, 
	\begin{align}
		\label{order3}
		&\sum_{1\leq i,j\leq N} |A_{i,j} B_{i,j}| \leq N^{1-g/2}  \norm{A} \norm{B},  \\
		&\sum_{1\leq i,j\leq N} |A_{i,j} B_{i,i}| \leq N^{(3-g)/2}  \norm{A} \norm{B}, \nonumber \\
		&\sum_{1\leq i,j\leq N} |A_{i,i} B_{i,i}| \leq N^{2-g/2}  \norm{A} \norm{B}. \nonumber
	\end{align}

	\noindent Further, with one matrix, with $g=1$ if it is of rank $1$ and $0$ otherwise,
	\begin{align}
		\label{order4}
		&\sum_{1\leq i,j\leq N} |A_{i,j}| \leq N^{(3-g)/2}  \norm{A},  \\
		&\sum_{1\leq i,j\leq N} |A_{i,i}| \leq N^{2-g/2}  \norm{A}. \nonumber
	\end{align}
	
	\noindent Thus thanks to Equations \eqref{order11} and \eqref{order12}, we have that 
	$$ \frac{y^{3}}{N^{3/2}} \sum_{i,j} \left|(A_1^{1})_{e_1,e_2} (A_2^{1})_{e_3,e_4} (A_3^{1})_{e_5,e_6} \right| \leq \frac{y^3}{N^{m/2}} \norm{A_1^1}\norm{A_2^1}\norm{A_3^1}. $$
	Thanks to the first two line of Equation \eqref{order2}, we also get that
	$$ \frac{y^{4}}{N^{2}} \sum_{i,j} \left(\left|(A_1^{2})_{e_1,\widetilde{e_2}}\right| + \left|(A_1^{3})_{\widetilde{e_1},e_2}\right|\right) \left| (A_2^{1})_{e_3,e_4} (A_3^{1})_{e_5,e_6} \right| \leq \frac{y^4}{N^{m/2}}\left(\norm{A_1^2}+\norm{A_1^3}\right)\norm{A_2^1}\norm{A_3^1}. $$
 
	\noindent With all of Equation \eqref{order2}, since we assumed that $y^4\leq \sqrt{N}$, we have that
    \begin{align*}
        &\frac{y^{5}}{N^{5/2}} \sum_{i,j} \left(\left|(A_1^{2})_{e_1,\widetilde{e_2}}\right| + \left|(A_1^{3})_{\widetilde{e_1},e_2}\right|\right)\left(\left|(A_2^{2})_{e_3,\widetilde{e_4}}\right| + \left|(A_2^{3})_{\widetilde{e_3},e_4}\right|\right) \left|(A_3^{1})_{e_5,e_6} \right| \\
        &\leq \frac{y^{5}}{N^{(1+m)/2}} \left(\norm{A_1^2}+\norm{A_1^3}\right)\left(\norm{A_2^2}+\norm{A_2^3}\right)\norm{A_3^1} \\
        &\leq \frac{y}{N^{m/2}} \left(\norm{A_1^2}+\norm{A_1^3}\right)\left(\norm{A_2^2}+\norm{A_2^3}\right)\norm{A_3^1},
    \end{align*}
    and
    \begin{align*}
        &\frac{y^{6}}{N^{3}} \sum_{i,j} \left(\left|(A_1^{2})_{e_1,\widetilde{e_2}}\right| + \left|(A_1^{3})_{\widetilde{e_1},e_2}\right|\right) \left(\left|(A_2^{2})_{e_3,\widetilde{e_4}}\right| + \left|(A_2^{3})_{\widetilde{e_3},e_4}\right|\right) \left(\left|(A_3^{2})_{e_5,\widetilde{e_6}}\right| + \left|(A_3^{3})_{\widetilde{e_5},e_6}\right|\right) \\
        &\leq \frac{y^{6}}{N^{1 + (m\wedge 2)/2}} \left(\norm{A_1^2}+\norm{A_1^3}\right)\left(\norm{A_2^2}+\norm{A_2^3}\right) \left(\norm{A_3^2}+\norm{A_3^3}\right) \\
        &\leq \frac{y^2}{N^{m/2}} \left(\norm{A_1^2}+\norm{A_1^3}\right) \left(\norm{A_2^2}+\norm{A_2^3}\right) \left(\norm{A_3^2}+\norm{A_3^3}\right).
    \end{align*}
    
	\noindent Then thanks to Equation \eqref{order3}, we also have
    \begin{align*}
        &\frac{y^{5}}{N^{5/2}} \sum_{i,j} \left| (A_2^{1})_{e_3,e_4} (A_3^{1})_{e_5,e_6} \right|
        \leq \frac{y^5}{N^{(2+m\wedge 2)/2}} \norm{A_2^1}\norm{A_3^1}
        \leq \frac{y}{N^{m/2}} \norm{A_2^1}\norm{A_3^1},
    \end{align*}
    and
    \begin{align*}
        \frac{y^{6}}{N^{3}} \sum_{i,j} \left(\left|(A_2^{2})_{e_3,\widetilde{e_4}}\right| + \left|(A_2^{3})_{\widetilde{e_3},e_4}\right|\right) \left| (A_3^{1})_{e_5,e_6} \right| &\leq \frac{y^6}{N^{1+(m\wedge 2)/2}} \left(\norm{A_2^2}+\norm{A_2^3}\right)\norm{A_3^1} \\
        &\leq \frac{y^2}{N^{m/2}} \left(\norm{A_2^2}+\norm{A_2^3}\right)\norm{A_3^1},
    \end{align*}
    as well as
    \begin{align*}
        &\frac{y^{7}}{N^{7/2}} \sum_{i,j} \left(\left|(A_2^{2})_{e_3,\widetilde{e_4}}\right| + \left|(A_2^{3})_{\widetilde{e_3},e_4}\right|\right) \left(\left|(A_3^{2})_{e_5,\widetilde{e_6}}\right| + \left|(A_3^{3})_{\widetilde{e_5},e_6}\right|\right) \\
        &\qquad\leq \frac{y^7}{N^{(3+m\wedge 2)/2}} \left(\norm{A_2^2}+\norm{A_2^3}\right) \left(\norm{A_3^2}+\norm{A_3^3}\right) \\
        &\qquad\leq \frac{y^3}{N^{(1+m)/2}} \left(\norm{A_2^2}+\norm{A_2^3}\right) \left(\norm{A_3^2}+\norm{A_3^3}\right).
    \end{align*}
	
	\noindent Consequently, thanks to Lemmas~\ref{podvfldmvd} (with $g=m+p$) and \ref{docsocscddssdc} as well as the moment assumption in~\eqref{assump}, we get that after summing over $i$ and $j$, there exists a constant $C_{n}$ such that the term in Equation \eqref{kjfd3} is bounded by 
	\begin{align}
		N^{-\varepsilon} \E\left[ |M_N|^{2n} \right] + C_{n} \left(\frac{y^4\norm{P}}{N^{1/2-2\varepsilon}}\right)^{2n}. \nonumber
	\end{align}
 
	Thus from Equations \eqref{guewigner22} and \eqref{kjfd2} combined with the above estimate, we get that for a constant~$C_n$,
	\begin{align*}
		&\left| e^{t/2}\E\left[ \tr_N\left( \partial_s(Q(X_t^N)\widetilde{\#}P) Y_s^N \right) M_N^{n-1} \overline{M_N}^{n} \right] - \frac{1}{N}\E\left[ \tr_N^{\otimes 2}\left( \partial_s \left(\partial_sQ(X_t^N)\widetilde{\#}P\right)  \right) M_N^{n-1} \overline{M_N}^{n} \right] \right|  \nonumber \\
		&\leq C_{n}\left( N^{-\varepsilon} \E\left[ |M_N|^{2n} \right] + \left(\frac{y^4\norm{P}}{N^{1/2-2\varepsilon}}\right)^{2n} \right).
	\end{align*}
	Hence by plugging this estimate back into Equation~\eqref{sdpdso2}, for $N$ sufficiently large,
    \begin{align*}
        \E\left[ |M_N|^{2n} \right] \leq \frac{C_n}{1-\frac{d}{2}C_n N^{-\varepsilon}} \left(\frac{y^4}{N^{1-2\varepsilon}}\right)^{2n} \leq 2C_n \left(\frac{y^4\norm{P}}{N^{1-2\varepsilon}}\right)^{2n}.
	\end{align*}
	Consequently from Markov's inequality we have, for any $\varepsilon>0$, with high probability that
	$$ \left\langle \x, Q(Y^N,A^N) \y \right\rangle = \E\left[ \left\langle \x, Q(X^N,A^N) \y \right\rangle \right] + \mathcal{O}\left(\frac{y^{4}\norm{P}}{N^{1/2-\varepsilon}}\right).$$
	Since $\norm{P}=\norm{\x}_2\norm{\y}_2$, this proves~\eqref{scalar concentration estimate}.\\
	
	\textbf{Step 4 (The case of the GUE and the GOE):} In the case where $Y_s^N$ is a GOE random matrix, then by using the Schwinger-Dyson equations, i.e. Proposition \ref{3SD} (with a slight modification to take into account the term $ M_N^{n-1} \overline{M_N}^{n}$), Equation \eqref{guewigner full set} simplifies into
	\begin{align*}
		&e^{t/2}\E\left[ \tr_N\left( \partial_s(Q(X_t^N)\widetilde{\#}P)\ Y_s^N \right) M_N^{n-1} \overline{M_N}^{n} \right] \nonumber\\
		&= \frac{1}{N}\E\left[ \tr_N^{\otimes 2}\left( \partial_s \left(\partial_sQ(X_t^N)\widetilde{\#}P\right) \right) M_N^{n-1} \overline{M_N}^{n} \right] \nonumber\\
		&\ \quad+ \frac{1}{N} \left(\frac{n}{2}-1\right) \E\left[ \tr_N\left( \left(\partial_sQ(X_t^N)\widetilde{\#}P\right) \left(\partial_sQ(X_0^N)\widetilde{\#}P \right)\right) M_N^{n-2} \overline{M_N}^{n} \right] \nonumber\\
		&\ \quad+ \frac{n}{2N} \E\left[ \tr_N\left( \left(\partial_sQ(X_t^N)\widetilde{\#}P \right) \left( \partial_sQ(X_0^N)\widetilde{\#}P\right)^* \right) M_N^{n-1} \overline{M_N}^{n-1} \right] \\
		&\ \quad+\frac{1}{N}\E\left[ \tr_N\left( h\left(\partial_s(Q(X_t^N)\widetilde{\#}P)\right) \right) M_N^{n-1} \overline{M_N}^{n} \right] \nonumber\\
		&\ \quad+ \frac{1}{N} \left(\frac{n}{2}-1\right) \E\left[ \tr_N\left(\left( \partial_s(Q(X_t^N)\widetilde{\#}P) \right)^T \left(\partial_s(Q(X_0^N)\widetilde{\#}P)\right) \right) M_N^{n-2} \overline{M_N}^{n} \right] \nonumber\\
		&\ \quad+ \frac{n}{2N} \E\left[ \tr_N\left( \left(\partial_s(Q(X_t^N)\widetilde{\#}P)\right)^T \left(\partial_s(Q(X_0^N)\widetilde{\#}P)\right)^* \right) M_N^{n-1} \overline{M_N}^{n-1} \right], \nonumber
	\end{align*}
	where $A^T$ is the transpose of $A$, and $h$ is the linear map such that $h(A\otimes B) = A^TB$. Thus, we get that for some constant $C_n$,
	\begin{align*}
		&\left| e^{t/2}\E\left[ \tr_N\left( \partial_s(Q(X_t^N)\widetilde{\#}P) Y_s^N \right) M_N^{n-1} \overline{M_N}^{n} \right] - \frac{1}{N}\E\left[ \tr_N^{\otimes 2}\left( \partial_s \left(\partial_sQ(X_t^N)\widetilde{\#}P\right)  \right) M_N^{n-1} \overline{M_N}^{n} \right] \right|  \nonumber \\
		&\leq C_{n}\left( N^{-\varepsilon} \E\left[ |M_N|^{2n} \right] + \left(\frac{y^2\norm{P}}{N^{1/2-2\varepsilon}}\right)^{2n} \right).
	\end{align*}
	Hence, by using Equation \eqref{sdpdso2}, one concludes that $\E[|M_N|^{2n}] = \mathcal{O}(y^{4n}N^{-n(1-\varepsilon)}\norm{P}^{2n})$ for any $n$. 
 
 The case where $Y_s^N$ is a GUE random matrix is even simpler. Indeed, we compute that
	\begin{align*}
		&e^{t/2}\E\left[ \tr_N\left( \partial_s(Q(X_t^N)\widetilde{\#}P)\ Y_s^N \right) M_N^{n-1} \overline{M_N}^{n} \right] \nonumber\\
		&= \frac{1}{N}\E\left[ \tr_N^{\otimes 2}\left( \partial_s \left(\partial_sQ(X_t^N)\widetilde{\#}P\right) \right) M_N^{n-1} \overline{M_N}^{n} \right] \nonumber\\
		&\ \quad+ \frac{1}{N} \left(\frac{n}{2}-1\right) \E\left[ \tr_N\left( \left(\partial_sQ(X_t^N)\widetilde{\#}P\right) \left(\partial_sQ(X_0^N)\widetilde{\#}P \right)\right) M_N^{n-2} \overline{M_N}^{n} \right] \nonumber\\
		&\ \quad+ \frac{n}{2N} \E\left[ \tr_N\left( \left(\partial_sQ(X_t^N)\widetilde{\#}P \right) \left( \partial_sQ(X_0^N)\widetilde{\#}P\right)^* \right) M_N^{n-1} \overline{M_N}^{n-1} \right],
	\end{align*}
	and the result follows.
\end{proof}

\section{Proof of the main theorems}
\label{sec:proof main theorems}

\begin{proof}[Proof of Theorem \ref{main_result}]
	To prove Equation \eqref{trace}, we combine Proposition \ref{concentrtrace} with Lemma 3.6 of \cite{topoexp}. Indeed, with $Q$ defined as in Proposition \ref{concentrtrace}, we have for any $\varepsilon>0$, that with high probability
	\begin{equation}
		\label{oifesoids}
		\ts_{N}\left( Q(Y^N,A^N) \right) = \E\left[ \ts_{N}\left(Q(X^N,A^N) \right) \right] + \mathcal O\left( N^{\varepsilon} \frac{y^4}{N} \right) .
	\end{equation}
	Whereas if we use Lemma 3.6 of \cite{topoexp} (with $n=0$), where  $x=(x_1,\dots,x_d)$ are free semicircular variables, free from $A^N\in\M_N(\C)^q$, we get that
	\begin{align}
		\label{oires}
		\E\left[ \ts_{N}\left(Q(X^N,A^N) \right) \right] = \ &\tau_N\left( Q(x,A^N) \right) \\
		&+ \frac{1}{N^2} \int_{0}^{\infty} \int_{0}^{t} \bigg(\int_{[0,1]^4} e^{-r-t} \E\left[\tau_{N}(R_{\alpha,\beta,\delta,\gamma,r,t} )\right]\  d\alpha d\beta d\gamma d\delta\bigg) dr dt, \nonumber
	\end{align}
	where $R_{\alpha,\beta,\delta,\gamma,r,t}$ is such that for some constant $C$ independent of $\alpha,\beta,\delta,\gamma,r,t,y$ and $N$,
	$$\E\left[\norm{R_{\alpha,\beta,\delta,\gamma,r,t}}\right] \leq C y^4.$$
	Hence we have that
	\begin{equation}
	\label{pfdsspd}
		\E\left[ \ts_{N}\left(Q(X^N,A^N) \right) \right] = \tau_N\left( Q(x,A^N) \right) + \mathcal{O}\left( \frac{y^4}{N^2} \right). 
	\end{equation}
	Finally, with $f_1$ to $f_k$ defined as in Theorem \ref{main_result}, we set 
	$$
	\widetilde{f}_i: t \mapsto \left\{
	\begin{array}{ll}
		e^{\i y_i t} & \mbox{if } f_i\neq \id_{\R}, \\
		t & \mbox{else.}
	\end{array}
	\right.
	$$
	Then with $Q=\widetilde{f}_1\left(P_1\right)\dots \widetilde{f}_k\left(P_k\right)$, and $\mu_i$ the Dirac measure in $1$ if $f_i=\id_{\R}$, thanks to the functional calculus we have
	\begin{equation}
		f_1(P_1(Y^N,A^N)) \cdots f_k(P_k(Y^N,A^N)) = \int_{\R^k} Q\left(Y^N,A^N\right)\ d\mu_1(y_1)\dots d\mu_k(y_k).
	\end{equation}
	Thus by combining Equations \eqref{oifesoids} and \eqref{pfdsspd}, we get Equation \eqref{trace} after integrating over $y$.	
	
	 Besides in the case where we are working only with GUE and GOE random matrices, then one can replace $y^4$ by $y^2$ in Equation \eqref{oifesoids}. Then if $y^2\leq N$, Equation \eqref{oires} let us conclude, whereas if $y^2\geq N$, then we simply use the fact that
	$$ \ts_{N}\left( Q(Y^N,A^N) \right) - \tau_N\left( Q(x,A^N) \right) = \mathcal{O}(1) = \mathcal{O}\left( \frac{y^2}{N} \right). $$

	To prove Equation \eqref{scalar}, we also start by using Proposition \ref{concentrcoeff} to show that for any $\varepsilon>0$, with high probability
	\begin{align}
		&\left\langle \x, Q(Y^N,A^N) \y \right\rangle = \E\left[ \left\langle \x, Q(X^N,A^N) \y \right\rangle \right] + \mathcal O\left( N^{\varepsilon} \frac{y^4}{\sqrt{N}} \norm{\x}_2\norm{\y}_2\right),
	\end{align}
	where one can replace $y^4$ by $y^2$ in the case where we are working only with GUE and GOE random matrices. Then we need to estimate $\E\left[ \left\langle \x, Q(X^N,A^N) \y \right\rangle \right]$. Thanks again to Lemma 3.6 of \cite{topoexp}, we get that 
	\begin{align*}
		&\E\left[ \left\langle \x, Q(X^N,A^N) \y \right\rangle \right] \\
		&= \E\left[ \tr_N\left( Q(X^N,A^N) \y \x^* \right) \right] \\
		&= N\tau_N\left( Q(x,A^N) \y \x^* \right) + \frac{1}{N} \int_{0}^{\infty} \int_{0}^{t} \left(\int_{[0,1]^4} e^{-r-t} \E\left[\tau_{N}(R_{\alpha,\beta,\delta,\gamma,r,t} \y\x^*)\right]\  d\alpha d\beta d\gamma d\delta\right) dr dt \\
		&= \tr_N\left( E_{\M_N(\C)}\left[Q(x,A^N)\right] \y \x^* \right) \\
		&\quad\quad\quad\quad\quad + \frac{1}{N^2} \int_{0}^{\infty} \int_{0}^{t} \left(\int_{[0,1]^4} e^{-r-t} \E\left[\tr_N\left( E_{\M_N(\C)}\left[R_{\alpha,\beta,\delta,\gamma,r,t}\right] \y\x^*\right)\right]\  d\alpha d\beta d\gamma d\delta\right) dr dt \\
		&= \left\langle\x, E_{\M_N(\C)}\left[Q(x,A^N)\right] \y \right\rangle \\
		&\quad\quad\quad\quad\quad + \frac{1}{N^2} \int_{0}^{\infty} \int_{0}^{t}\left( \int_{[0,1]^4} e^{-r-t} \E\left[\tr_N\left( E_{\M_N(\C)}\left[R_{\alpha,\beta,\delta,\gamma,r,t}\right] \y\x^*\right)\right]\  d\alpha d\beta d\gamma d\delta\right) dr dt,
	\end{align*}
	where $R_{\alpha,\beta,\delta,\gamma,r,t}$ is once again such that for some constant $C$ independent of $\alpha,\beta,\delta,\gamma,r,t,y$ and $N$,
	$$\E\left[\norm{R_{\alpha,\beta,\delta,\gamma,r,t}}\right] \leq C y^4.$$
	Consequently since $P=\y\x^*$ is of rank $1$ and $\norm{P}=\norm{\x}_2\norm{\y}_2$,
	$$ \E\left[ \left\langle \x, Q(X^N,A^N) \y \right\rangle \right] =  \left\langle\x, E_{\M_N(\C)}\left[Q(x,A^N)\right] \y \right\rangle + \mathcal{O}\left(\frac{y^4}{N^2}\norm{\x}_2\norm{\y}_2\right).$$
	The rest of the proof follows as for Equation \eqref{trace} with the difference that after integrating over $y$, we use the fact that $b\mapsto E_{\M_N(\C)}[b]$ is a continuous linear functional to switch the integral and the conditional expectation.
\end{proof}

\begin{proof}[Proof of Theorem \ref{dliuf}]
	Let $Q_1$ to $Q_p$ be non commutative polynomials, $i_1,\dots,i_p\in[1,k]$ be such that for every $j$, $\tau(Q_j(a_{i_j}))=0$ and  if $j<p$, $i_j\neq i_{j+1}$. Then if we set $u_{2j-1}^N:= e^{\i (y_{i_j}^N -y_{i_{j-1}}^N) P(x)}$ (with the convention $i_0 = i_p$) and $u_{2j}^N = Q_j(A_{i_j}^N)$, one can apply Theorem \ref{main_result} to obtain that with high probability
	$$ \tau_N\left( Q_1(a_{i_1}^N) \dots Q_p(a_{i_p}^N)\right) = \tau_{N}\left( u_1^N u_2^N \dots u_{2p-1}^N u_{2p}^N \right) + \mathcal{O}\left( \frac{1}{N}{\max\limits_{i\neq j} | y_i^N-y_j^N |^4} \right). $$
	Further, if $Y^N$ is a family of GUE and GOE matrices, then
	\begin{align*}
	\tau_N\left( Q_1(a_{i_1}^N) \dots Q_p(a_{i_p}^N)\right) = \tau_{N}\left( u_1^N u_2^N \dots u_{2p-1}^N u_{2p}^N \right) + \mathcal{O}\left(\frac{1}{N} \max\limits_{i\neq j} | y_i^N-y_j^N |^2 \right). 
	\end{align*}
	Since by assumption for any $i,j$, $|y_i^N-y_j^N|\ll N^{1/2}$ in the case of GUE and GOE matrices, and $|y_i^N-y_j^N|\ll N^{1/4}$ in all generality, we get in both cases that with high probability
	$$ \tau_N\left( Q_1(a_{i_1}^N) \dots Q_p(a_{i_p}^N)\right) = \tau_{N}\left( u_1^N u_2^N \dots u_{2p-1}^N u_{2p}^N \right) + o(1). $$
	Thus the former equality is true almost surely by Borel-Cantelli.
	
	But then thanks to Proposition 11.4 of \cite{nica_speicher}, with $\mathrm{NC}[2p]$ the set of non-crossing partition of $[1,2p]$ (i.e. the set of partitions of $[1,2p]$ such that one cannot find $a,c$ in one block as well as $b,d$ in another block such that $a< b<c<d$), we can write 
	$$	\tau_{N}\left( u_1^N \dots u_{2p}^N \right) = \sum_{\pi\in \mathrm{NC}[2p]} \kappa_{\pi}(u_1^N, u_2^N ,\dots, u_{2p-1}^N, u_{2p}^N) ,$$
	where $\kappa_{\pi}(u_1^N, u_2^N ,\dots, u_{2p-1}^N, u_{2p}^N)$ are the so-called free cumulants. We do not need to define those in details in this proof, however it is important to note that $\kappa_{\pi}(u_1^N, u_2^N ,\dots, u_{2p-1}^N, u_{2p}^N)$ is a linear combination of products of traces such that $ u_{i}^N$ and $ u_{j}^N$ can only be in the same trace if $i$ and $j$ belong to the same block in the partition $\pi$. 
 
    Besides thanks to Theorem 11.16 of \cite{nica_speicher}, if $i,j$ belong to the same block in the partition $\pi$ and $ u_{i}^N$ is free from $ u_{j}^N$, then $ \kappa_{\pi}(u_1^N, u_2^N ,\dots, u_{2p-1}^N, u_{2p}^N)  =0$. Consequently the elements~$u_{j}^N$ for odd $j$ cannot be in the same trace as those for even $j$. More precisely, for $ \kappa_{\pi}(u_1^N, u_2^N ,\dots, u_{2p-1}^N, u_{2p}^N)$ not to be $0$, odd and even integers in $[1,2p]$ cannot be in the same block. Hence either $\{2\}$ is a singleton, or else $2$ belongs to $B$ a block of the partition $\pi$ which contains only even integers. Let $b>2$ be one of those integers, then since $\pi$ is non-crossing, one can iterate this process with $3$ instead of $2$ and $[3,b-1]$ instead of $[2,2p]$. Thus by a quick induction, one can see that necessarily for $\kappa_{\pi}(u_1^N, u_2^N ,\dots, u_{2p-1}^N, u_{2p}^N)$ to be non zero, there has to be a block in the partition $\pi$ which consists of a single element which is not $1$ (this is important since we did not assume that $i_1\neq i_p$, and hence one could have $u_1^N=1$). Then since for any~$j$,
	$$\lim_{N\to\infty}\tau_N(u_{2j}^N) = \tau(Q_j(a_{i_j}))=0, $$
	
	\noindent we only need to show that for any $j$, $\lim\limits_{N\to\infty} \tau\left(e^{\i (y_{i_j}^N -y_{i_{j-1}}^N) P(x)}\right) = 0.$ But since by assumption $i_j\neq i_{j-1}$, we have that $|y_{i_j}^N -y_{i_{j-1}}^N| \gg 1$. Consequently, all we need to prove is that $\lim\limits_{|t|\to\infty} \tau\left(e^{\i t P(x)}\right) = 0$. However thanks to \cite{density}, Theorem 1.1, (6), there exists  $f\in L^1(\R)$ such that
	$$ \tau\big(e^{\i t P(x)}\big) = \int_{\R} e^{\i t a} f(a)\ da.$$
	Thus, thanks to the Riemann-Lebesgue lemma, we have that 
		$$\lim_{N\to\infty}\tau_N(u_{2j-1}^N) =0. $$
	Hence we proved that for any $i_1,\dots,i_p\in[1,k]$ such that for $j<p$, $i_j\neq i_{j+1}$ and non commutative polynomials $Q_j$, such that for every $j$, $\tau(Q_j(a_{i_j}))=0$,  
	$$ \lim\limits_{N\to\infty} \tau_N\left( Q_1(a_{i_1}^N) \dots Q_p(a_{i_p}^N)\right) =0.$$
	Hence $(a_1^N,\dots,a_k^N)$ converges in distribution towards the free family $(a_1,\dots,a_k)$.
\end{proof}

\subsection*{Acknowledgments}

We thanks L\'asl\'o Erd\H{o}s for comments on the first draft. F.P.\ is supported by the Knut and Alice Wallenberg Foundation. K.S.\ is supported in parts by the Swedish Research Council (VR-2017-05195,VR-2021-04703), and the Knut and Alice Wallenberg Foundation. This material is based upon work supported by the National Science Foundation under Grant No. DMS-1928930 while K.S participated in a program hosted by the Mathematical Sciences Research Institute in Berkeley, CA, during the Fall 2021 semester.

\bibliographystyle{abbrv}

\end{document}